\documentclass[final,1p,times,11pt]{elsarticle}
\usepackage{geometry}
\usepackage{graphicx,subfigure,epsfig,multirow}
\usepackage{epstopdf}
\usepackage{amsmath,mathrsfs,amssymb,amsthm,dsfont,bm,amsfonts}     
\usepackage{hyperref}
\usepackage{lipsum}
\usepackage{float}
\usepackage{array, caption, threeparttable}
\usepackage{booktabs}
\usepackage{bbding}
\usepackage{diagbox}
\usepackage[misc]{ifsym}
\usepackage{multicol}
\usepackage{cases}

\newtheorem{alg}{Algorithm}[section]
\newtheorem{theorem}{Theorem}[section]
\newtheorem{assumption}{Assumption}[section]

\theoremstyle{definition}

\newtheorem{example}{Example}[subsection]
\allowdisplaybreaks[4]
\theoremstyle{remark}
\newtheorem{remark}{\bf{Remark}}

\numberwithin{equation}{section}
\numberwithin{figure}{section}
\numberwithin{table}{section}
\usepackage{etoolbox}
\makeatletter
\def\ps@pprintTitle{%
   \let\@oddhead\@empty
   \let\@evenhead\@empty
   \def\@oddfoot{\centerline{\thepage}}%
   \let\@evenfoot\@oddfoot}
\makeatother

\begin{document}
\title{A novel energy-conservation Crank-Nicolson finite element method for generalized Klein-Gordon-Zakharov equations }

\author[a]{Xuemiao Xu}
\ead{xu_xuem@163.com}

\author[b]{Maosheng Jiang}
\ead{msjiang@qdu.edu.cn}

\author[a]{Jiansong Zhang\corref{zhang}}
\ead{jszhang@upc.edu.cn}

\author[c]{Jiang Zhu}
\ead{jiang@lncc.br}

\address[a]{Department of Applied Mathematics, China University of Petroleum, Qingdao 266580, China}
\address[b]{School of Mathematics and Statistics, Qingdao University,  Qingdao 266071, China}

\address[c]{Laborat\'orio Nacional de Computa\c{c}\~ao Cient\'{\i}fica, MCTI 
        Avenida Get\'ulio Vargas 333, 25651-075 Petr\'opolis, RJ, Brazil}

\cortext[zhang]{Corresponding author. Jiansong Zhang}

\begin{abstract}
This article focuses on an energy-conservation Galerkin finite element method (FEM) for the generalized Klein-Gordon-Zakharov (KGZ) equations. This method combines the bilinear finite element method for spatial discretization with the Crank-Nicolson (CN) scheme for temporal discretization, thereby guaranteeing exact conservation of the discrete energy functional. A rigorous theoretical analysis  is devoted to deriving  error bounds for the fast-time-scale electronic field $u$ and the ion density deviation $\varphi$. By systematically integrating interpolation estimates, Ritz projection, and a postprocessing technique,  the superclose error estimates and global superconvergence  are established for $u$ in the $H^1$-norm, even under weakened regularity assumptions on the exact solution. Concurrently, we prove $H^1$-norm superconvergence for the auxiliary variable $\phi$  ($-\Delta\phi = \varphi_t$) and optimal-order $L^2$-norm error estimates for  the auxiliary variable $p$ ($p=u_t$) and $\varphi$. Numerical examples are provided to confirm theoretical results.

\end{abstract}
\begin{keyword}
	Superclose and superconvergence, Energy-conservation,  Interpolation and Ritz projection, Generalized KGZ equations.
\end{keyword}

\maketitle

\section{Introduction}
As a fundamental mathematical model in plasma physics, the Klein–Gordon–Zakharov equations accurately depict the nonlinear coupling between high-frequency Langmuir waves and low-frequency ion-acoustic waves, see \cite{baskonus2019new,nestor2020new}. In this paper, we consider a novel numerical method for simulating the following generalized KGZ equations (\cite{zakharov1972collapse})
	\begin{equation}
		\label{eq1.1}
		\left\{\begin{aligned}
			&(\mathrm{a})\quad u_{tt}-\Delta u+u+\varphi u+|u|^2u=0, &&(\mathbf{x},t)\in\Omega\times J , \\
			&(\mathrm{b})\quad \varphi_{tt}-\Delta \varphi-\Delta |u|^2=0, &&(\mathbf{x},t)\in\Omega\times J , \\
			&(\mathrm{c})\quad u(\mathbf{x},t)=0,\quad \varphi(\mathbf{x},t)=0  &&(\mathbf{x},t)\in\partial\Omega\times J, \\
			&(\mathrm{d})\quad u(\mathbf{x},0)=u_0(\mathbf{x}),  \quad	u_t(\mathbf{x},0)=u_1(\mathbf{x}), 	&&\mathbf{x}\in\Omega,\\
			&(\mathrm{e})\quad \varphi(\mathbf{x},0)=\varphi_0(\mathbf{x}), \quad \varphi_t(\mathbf{x},0)=\varphi_1(\mathbf{x}), 	&&\mathbf{x}\in\Omega,
		\end{aligned}
		\right.
	\end{equation}
where $\Omega \subset \mathbb{R}^d\ (d=2,3)$ is a bounded convex domain with boundary $\partial\Omega$; $J = (0, T]\,\,(0<T)$ represents the time interval; the complex-valued unknown function $u(\mathbf{x}, t)$ captures the fast-time-scale electronic component of the high-frequency electric field, and the  real-valued unknown function $\varphi(\mathbf{x}, t)$ describes the low-frequency deviation of the ion density from its equilibrium state; the initial data $u_0(\mathbf{x})$, $u_1(\mathbf{x})$, $\varphi_0(\mathbf{x})$, and $\varphi_1(\mathbf{x})$ are given sufficiently smooth functions; physically, the cubic nonlinear term $|u|^2 u$ accounts for the nonlinear self-interaction of Langmuir waves. 

The KGZ system \eqref{eq1.1}  possesses an important structural invariant: the exact conservation of total energy \cite{bao2016uniformly,gao2017galerkin}, which reads as
\begin{equation}
	\frac{dE}{dt} = 0, \quad t \in (0, T],\nonumber
\end{equation}
where the energy functional $E(t)$ is given as follows
\begin{equation}
	E(t) = \int_{\Omega} \left( |\nabla u|^2 + |u|^2 + |u_t|^2 + \frac{1}{2}|\varphi|^2 + \frac{1}{2}|\nabla \phi|^2 + \frac{1}{2}|u|^4 + \varphi|u|^2 \right) d\mathbf{x},\nonumber
\end{equation}
with $\varphi_t = \Delta \phi$.

Over the past decades, the generalized KGZ equations have attracted sustained theoretical analysis and numerical simulation. For instance, the well-posedness and the existence of global smooth solutions for the KGZ equations have been established in  \cite{boling1995global,adomain1997non,ozawa1999well,ismail20101,kumar2021some}. Owing to the strong nonlinear coupling intrinsic to the generalized KGZ system,  the derivation of exact analytical solutions is precluded for generic initial and boundary conditions. Consequently, The development of efficient and structure-preserving numerical algorithms, particularly those that exactly conserve the total energy functional, has attracted considerable scholarly attention. Among classical spatial discretization methods, the finite difference methods (FDMs) have been widely employed for its flexibility in designing structure-preserving schemes. A series of conservative implicit, explicit, and semi-explicit compact difference schemes have been systematically developed and rigorously analyzed in  \cite{wang2007conservative,wang2013convergence,chen2012numerical}.	For complex scenarios involving different parameter limit regimes, two conservative fourth-order compact finite difference schemes specifically tailored for the subsonic limit regime were investigated, see \cite{li2024analysis}. Alternatively, FEMs offer distinct advantages in accommodating complex geometries and enabling rigorous, problem-adapted error estimation. Galerkin FEMs have been developed for generalized KGZ equations  \cite{gao2017galerkin}. Moreover, for KGZ systems with power-law nonlinearities, a bilinear Galerkin FEM was comprehensively analyzed in \cite{shi2022high}, establishing superclose and global superconvergence results; subsequently, an $H^1$-Galerkin mixed FEM was developed in \cite{wang2024superconvergence}, yielding optimal-order superconvergence estimates for both primary and auxiliary variables in their respective norms.

To address the severe numerical challenges arising from highly oscillatory solutions in specific physical regimes, e.g. the high-plasma-frequency limit, spectral methods and their variants have been systematically introduced. A uniformly accurate multiscale time integrator spectral method was devised  in \cite{bao2016uniformly} to effectively suppress the temporal high-frequency oscillations. Furthermore, several high-fidelity meshless approximation frameworks have been rigorously investigated: the radial basis function partition of unity (RBF-PU) method \cite{barikbin2025radial}, the meshless collocation method based on barycentric rational interpolation \cite{orucc2022numerical}, and the trigonometric quintic and exponential B-spline schemes specifically designed for simulating strong Langmuir turbulence \cite{khater2021strong}.  Collectively, these advances have substantially broadened the scope and reliability of numerical methods for high-dimensional and generalized KGZ systems.

Recently, linearized structure-preserving algorithms based on auxiliary variables have gained considerable traction, as they rigorously preserve the system's energy-dissipative or energy-conservative structure at the discrete level while substantially enhancing computational efficiency for strongly nonlinear systems, see \cite{jiang1,jiang2,jiang3}. In  \cite{Li2024new},  a class of high-order energy-preserving algorithms based on the supplementary variable method (SVM) was introduced by reformulating the system into an equivalent optimization problem. Moreover, the scalar auxiliary variable (SAV) and exponential scalar auxiliary variable (ESAV) approaches, combined with conservative nonstandard FDM, were employed to construct linearly unconditionally stable schemes that  preserve modified energy conservation \cite{guo2023energy,fazayel2026numerical}. By combining the quadratic auxiliary variable (QAV) approach with symplectic Runge-Kutta methods, high-order structure-preserving schemes  have also been successfully developed for the generalized  KGZ system \cite{guo2025high}. It is worth noting that, driven by the discovery of anomalous diffusion in complex plasma environments, the classical KGZ system has been generalized to the fractional calculus framework. For spatial or bi-fractional KGZ equations, a series of energy-conserving linear implicit difference schemes and time second-order operator splitting methods have been proposed in \cite{xie2019analysis,xie2021linear,xie2022stable}. In addition, explicit energy-conserving discretizations have been designed in \cite{martinez2021implicit,martinez2019theoretical}, while a high-order structure-preserving scheme for the space-fractional system was presented via an exponential Fourier collocation approach combined with a shifted Lubich difference operator in \cite{li2026high}.

Notwithstanding the extensive body of theoretically rigorous and well-established advancements concerning the numerical approximation of classical, generalized, and fractional KGZ equations, a conspicuous research gap continues to prevail within the existing literature: there remains a notable absence of fully discrete CN-type discretization schemes that are capable of precisely preserving energy invariants while rigorously retaining the inherent Hamiltonian structure of the KGZ system across both temporal and spatial discretization procedures. It is widely acknowledged that any improper discretization procedure that fails to faithfully conserve the intrinsic structural characteristics possessed by continuous governing equations may inevitably deteriorate both the mathematical well-posedness and long-time computational fidelity of the resultant discrete numerical models, thereby highlighting the indispensable significance of structure-preserving numerical discretization techniques within computational mathematical research \cite{li1995finite}. In view of the aforementioned theoretical necessity and research motivation, the present study is dedicated to the construction of an energy-conserved CN numerical scheme for the coupled nonlinear KGZ system formulated in Eq. (\ref{eq1.1}). Within the theoretical framework of conforming bilinear Galerkin finite element formulations, we further conduct a rigorous analytical derivation to establish superclose properties and global superconvergence estimates under the $H^1$-norm. To facilitate the implementation of the subsequent theoretical error analysis, a set of auxiliary variables denoted by $p = u_t$ and $\phi$ are introduced throughout this work, wherein the intermediate variable $\phi$ is implicitly constrained by the elliptic partial differential relation $\Delta\phi = \varphi_t$. The major scientific contributions elaborated within this paper can be explicitly summarized into three comprehensive aspects: (i) in contrast to the methodological limitations manifested in the prior literature presented in \cite{shi2022high}, the presently proposed fully discrete energy-conservation CN scheme effectively overcomes the inherent computational difficulties arising from strongly nonlinear coupling terms embedded in the KGZ system, which further enables the establishment of a structurally consistent and computationally robust numerical framework for nonlinear wave simulations; (ii) rigorous mathematical proofs are comprehensively performed to verify the global $H^1$-norm superconvergence behavior with second-order accuracy $O(h^2 + \tau^2)$ for the primary physical field variables $u$ and $\phi$, while optimal $L^2$-norm error estimates possessing identical convergence orders are simultaneously derived for the auxiliary variables $p$ and $\varphi$; (iii) by virtue of the synergistic combination between Ritz projection operators and conventional Lagrange interpolation techniques, the spatial regularity prerequisites imposed on analytical solutions are substantially ameliorated in comparison with traditional interpolation-based error analysis strategies. More specifically, the minimal smoothness requirement is rationally reduced from $u_t, p \in \mathcal{H}^3(\Omega)$ and $\varphi \in H^3(\Omega)$ to a milder regularity condition satisfying $u_t, p \in \mathcal{H}^2(\Omega)$ and $\varphi \in H^2(\Omega)$. Ultimately, a collection of representative numerical experiments are carried out to substantiate the rationality of the derived theoretical conclusions, whereby the computational reliability and structural preservation capability of the developed numerical methodology are validated through extensive numerical simulations concerning wave dynamical evolutions and nonlinear interaction phenomena inherent in two-dimensional and three-dimensional KGZ models.

In what follows, we denote by $W^{m,p}(\Omega)$ the classical Sobolev spaces for $1 \leq p \leq \infty$, endowed with the standard norm $\|\cdot\|_{W^{m,p}}$ and seminorm $|\cdot|_{W^{m,p}}$. For brevity, we adopt the following simplified notations: $L^p(\Omega)$, $H^m(\Omega)$, and $\| \cdot \|_m$ for $W^{0,p}(\Omega)$, $W^{m,2}(\Omega)$, and $\| \cdot \|_{m,2}$, respectively. Let $\gamma, \omega : \Omega \to \mathbb{C}$ be arbitrary functions. The associated scalar inner product is then given by
\begin{equation}
(\gamma, \omega) = \int_\Omega \gamma \hat{\omega} \text{ dxdy}, \nonumber
\end{equation}
with $\hat{\omega}$ representing the complex conjugate. Let $L^p(J;Z)$ be the time dependent Banach space. For $1\leq p<\infty$, define 
\begin{equation}
\|g\|_{L^p(J;Z)}=\bigg(\int_J\|g(\cdot,t)\|_Z^p\text{ dt} \bigg)^{\frac{1}{p}}.\nonumber
\end{equation}
For $p=\infty$, set $\|g\|_{L^\infty(J;Z)}=\text{sup}_{t\in J}\|g(\cdot,t)\|_Z$. Consider a complex-valued function $\psi=\psi_1+i\psi_2$. Define
\begin{equation}
\mathcal{L}^2(\Omega)=\{\psi_1+i\psi_2:\ \psi_1,\psi_2\in L^2(\Omega)\},\quad
\mathcal{H}^m(\Omega)=\{\psi_1+i\psi_2:\ \psi_1,\psi_2\in H^m(\Omega)\}.\nonumber
\end{equation}
Throughout this article, $C$ denotes a generic positive constant independent of mesh size $h$ and time increment $\tau$.

The remainder of this paper is organized as follows. Section 2 introduces the necessary preliminaries, and presents the energy-conserved CN scheme, followed by a rigorous proof of its unique solvability. In Section 3, we derive global supercloseness and superconvergence estimates in the $H^1$-norm for the primary variables. Section 4 provides comprehensive numerical experiments to validate the theoretical results—including convergence rates, energy preservation, and long-term stability.  Finally, concluding remarks are offered in Section 5.

\section{Energy conservation Crank-Nicolson type fully discrete scheme}

 To illustrate our method, we introduce two auxiliary functions $p$ and $\phi$:
 \[
  p = u_t,\quad 
   \Delta\phi = \varphi_t. 
  \]
 Consequently, the  KGZ system (\ref{eq1.1}) can be rewritten into the subsequent equivalent formulation:
 \begin{equation}\label{eq2.1}
 	\left\{\begin{aligned}
 		&p_{t}-\Delta u+u+\varphi u+|u|^2u=0, &&(\mathbf{x},t)\in\Omega\times J , \\
 		&p-u_t=0, &&(\mathbf{x},t)\in\Omega\times J , \\
 		&\phi_{t}-\varphi-|u|^2=0, &&(\mathbf{x},t)\in\Omega\times J , \\
 		&\Delta\phi-\varphi_t=0, &&(\mathbf{x},t)\in\Omega\times J , \\
 		&u(\mathbf{x},t)=0,\quad  p(\mathbf{x},t)=0,\quad  \varphi(\mathbf{x},t)=0,\quad  \phi(\mathbf{x},t)=0,  &&(\mathbf{x},t)\in\partial\Omega\times J, \\
 		&u(\mathbf{x},0)=u_0(\mathbf{x}),  \quad 	u_t(\mathbf{x},0)=u_1(\mathbf{x}), 	&&\mathbf{x}\in\Omega,\\
 		&\varphi(\mathbf{x},0)=\varphi_0(\mathbf{x}), \quad \varphi_t(\mathbf{x},0)=\varphi_1(\mathbf{x}), 	&&\mathbf{x}\in\Omega.
 	\end{aligned}
 	\right.
 \end{equation} 
 The corresponding weak formulation of (\ref{eq2.1}) can be given as follows: find $(u,p,\phi,\varphi)\in \mathcal{H}_0^1(\Omega)\times \mathcal{H}_0^1(\Omega)\times H_0^1(\Omega)\times H_0^1(\Omega)$,  such that 
\begin{equation}\label{eq1.2}
		\left\{\begin{aligned}
			&(p,v_1)=(u_t,v_1),&&\forall v_1\in\mathcal{L}^2(\Omega), \\
			&(p_t,v_2)+(\nabla u,\nabla v_2)+(u,v_2)+(\varphi u,v_2)+(|u|^2u,v_2)=0, &&\forall v_2\in\mathcal{H}_0^1(\Omega), \\
			&(\nabla\phi,\nabla w_1)=-(\varphi_t,w_1), &&\forall w_1\in H_0^1(\Omega), \\
		 &  (\phi_t,w_2)-(\varphi,w_2)-(|u|^2,w_2)=0, &&\forall w_2\in L^2(\Omega),\\
		   	&u(\mathbf{x},0)=u_0(\mathbf{x}),   \quad 	p(\mathbf{x},0)=u_1(\mathbf{x}), 	&&\mathbf{x}\in\Omega,\\
		   & \varphi(\mathbf{x},0)=\varphi_0(\mathbf{x}),  \quad  \phi(\mathbf{x},0)=\varphi_0(\mathbf{x}),	&&\mathbf{x}\in\Omega,
		\end{aligned}
		\right.
\end{equation}
where $\mathcal{H}_0^1(\Omega)=\{\xi_1+i\xi_2\,|\,\xi_1,\xi_2\in H^1_0(\Omega)\}$.
 
For the purpose of characterizing the inherent conservative property of the considered coupled KGZ system, we define the continuous energy functional in the following form:
 \begin{equation}
 	E(t) = \|\nabla u\|_0^2 + \|u\|_0^2 + \|p\|_0^2 + \frac{1}{2}\|\varphi\|_0^2 + \frac{1}{2}\|\nabla\phi\|_0^2 + \frac{1}{2}\int_{\Omega}|u|^4 \, d\mathbf{x} + (\varphi, |u|^2).\nonumber
 \end{equation}
As theoretically validated and mathematically proven in \cite{guo2023energy}, the analytical solution $(u, p, \phi, \varphi)$ to the continuous governing system (\ref{eq2.1}) obeys the intrinsic energy conservation law, which yields the following differential conservation identity:
 \begin{equation}
 	\frac{d}{dt}E(t) = 0, \quad t \in [0, T].\nonumber
 \end{equation}
 
To facilitate the rigorous construction of the fully discrete numerical scheme, we first perform a standard domain discretization procedure on the computational domain $\Omega \subset \mathbb{R}^d$ with $d=2,3$. Specifically, the spatial domain is partitioned into a family of uniform meshes $\mathcal{T}_h=\{K\}$, where each mesh element corresponds to a rectangular cell for the two-dimensional case and a cuboid cell for the three-dimensional scenario, with $h$ denoting the spatial mesh size. Consistent with the spatial discretization framework presented in \cite{lin2006finite}, the conforming finite element spaces consisting of bilinear or trilinear basis functions are formally defined as follows:
\begin{equation}
	V_0^h=\{v_h\in C(\Omega);\ v_h|_K\in\mathcal{Q}_1(K),\ v_h|_{\partial\Omega}=0,\ \forall K\in T_h\},\nonumber
\end{equation}
where $\mathcal{Q}_1(K) = \text{span}\{1, x, y, xy\}$ for the two-dimensional case, and $\mathcal{Q}_1(K) = \text{span}\{1, x, y, z, xy, yz, zx, xyz\}$ for the three-dimensional case. For complex value function $\xi$, define the finite element space
\begin{equation}
	\mathcal{V}_0^h\overset{\cdot}{=}\{\xi_1+i\xi_2\,|\,\xi_1,\xi_2\in V_0^h\}.\nonumber
\end{equation}

To discretize the time interval $[0, T]$, we introduce a uniform temporal mesh characterized by the nodes $t_n = n\tau$ for $n = 0, 1, \dots, N$, where $\tau = T/N$ represents the fixed time step size. The half-integer time points are defined accordingly as $t_{n-1/2} = (n - 1/2)\tau$. Given a sufficiently smooth function $\varrho(\mathbf{x}, t)$, we denote its discrete evaluation at time $t_n$ by $\varrho^n = \varrho(\mathbf{x}, t_n)$ and at the midpoint by $\varrho^{n-1/2} = \varrho(\mathbf{x}, t_{n-1/2})$. For any discrete sequence $\{\varrho^n\}_{n=0}^N$, we define 
\begin{equation}
\partial_t \varrho^n = \frac{\varrho^n - \varrho^{n-1}}{\tau}, \quad \overline{\varrho^n} = \frac{\varrho^n + \varrho^{n-1}}{2}.
\end{equation}
	
Subsequently, we proceed to formulate the fully discrete finite element approximation procedure for the aforementioned coupled KGZ system.
\begin{alg}\label{alg1}
For given initial approximate values $(u_h^0,p_h^0,\phi_h^0,\varphi_h^0)\in \mathcal{V}_0^h\times\mathcal{V}_0^h \times V_0^h\times V_0^h$, 
seek  $(u_h^n,p_h^n,\phi_h^n,\varphi_h^n)\in \mathcal{V}_0^h\times\mathcal{V}_0^h \times V_0^h\times V_0^h$, $n=1,2,3,\ldots,N$, such that,
		\begin{equation}
		\label{eq1.3}
		\left\{\begin{aligned}
		&(\mathrm{a})\quad (\overline{p_h^n},v_{1h})=(\partial_tu_h^n, ,v_{1h}) ,&\forall v_{1h}\in\mathcal{V}_0^h,\\
		&(\mathrm{b})\quad	(\partial_tp_h^n,v_{2h})+(\nabla\overline{u_h^n},\nabla v_{2h}
			)+(\overline{u_h^n},v_{2h})+(\overline{\varphi_h^n}\,\overline{u_h^n},v_{2h})+(\frac{1}{2}(|u_h^n|^2+|u_h^{n-1}|^2)\overline{u_h^n},v_{2h})=0, &\forall v_{2h}\in\mathcal{V}_0^h, \\
		&(\mathrm{c})\quad	(\nabla\overline{\phi_h^n},\nabla w_{1h})=-(\partial_t\varphi_h^n,w_{1h}), &\forall w_{1h}\in V_0^h, \\
		&(\mathrm{d})\quad	(\partial_t\phi_h^n,w_{2h})- (\overline{\varphi_h^n},w_{2h})-(\frac{1}{2}(|u_h^n|^2+|u_h^{n-1}|^2),w_{2h})=0, &\forall w_{2h}\in V_0^h.
		\end{aligned}
		\right.
	\end{equation}
\end{alg}
\begin{remark}
In fact, to rigorously preserve the essential property  (\ref{eq2.9}), the standard Crank-Nicolson discretization is modified in Algorithm \ref{alg1}  by replacing the terms $|\overline{u_h^n}|^2\overline{u_h^n}$ and $|\overline{u_h^n}|^2$ with $\frac{1}{2}(|u_h^n|^2 + |u_h^{n-1}|^2)\overline{u_h^n}$ and $\frac{1}{2}(|u_h^n|^2 + |u_h^{n-1}|^2)$, respectively.
\end{remark}

Define the discrete energy functional corresponding to the developed numerical algorithm:
\[
E^n=\|u_h^n\|_1^2+\|u_h^n\|_0^2+\|p_h^n\|_0^2+\frac{1}{2}\|\varphi_h^n\|_0^2+\frac{1}{2}\|\phi_h^n\|_1^2+\frac{1}{2}\|u_h^n\|_{0,4}^4+(\varphi_h^n,|u_h^n|^2).
\]
It can be mathematically verified that the aforementioned coupled system (\ref{eq1.3}) inherently admits the subsequent energy conservation characteristic throughout the temporal evolution. 

\begin{theorem}
Suppose that the numerical solution $(u_h^n,p_h^n,\phi_h^n,\varphi_h^n)$ satisfies the aforementioned fully discrete finite element scheme  (\ref{eq1.3}). Then, the discrete energy functional $E^n$ obeys the exact conservation law over discrete temporal layers, i.e.,
\begin{equation}
E^N=E^{N-1}=\cdots=E^0.  \label{eq2.9}
\end{equation}
\end{theorem}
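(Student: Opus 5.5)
The plan is the standard discrete energy method: choose test functions in (\ref{eq1.3}) so that every term becomes a discrete time difference, show that the sum telescopes as $(E^n-E^{n-1})/\tau=0$, and then iterate. Throughout I will read $\|u_h^n\|_1$ and $\|\phi_h^n\|_1$ in $E^n$ as the $H^1$-seminorms $\|\nabla u_h^n\|_0$ and $\|\nabla\phi_h^n\|_0$. This matches the continuous functional $E(t)$. With the full $H^1$-norm an extra $\|u_h^n\|_0^2$ and $\frac12\|\phi_h^n\|_0^2$ would appear, and the latter is not conserved by the scheme.

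\textbf{Step 1: the $(u,p)$ subsystem.} Take $v_{1h}=\partial_t p_h^n\in\mathcal{V}_0^h$ in (\ref{eq1.3})(a) and $v_{2h}=\partial_t u_h^n\in\mathcal{V}_0^h$ in (\ref{eq1.3})(b).
\begin{itemize}
\item Taking complex conjugates in (a) gives $(\partial_t p_h^n,\partial_t u_h^n)=(\partial_t p_h^n,\overline{p_h^n})$.
\item Hence, by the identity $\mathrm{Re}\,(\overline{a^n},\partial_t a^n)=(\|a^n\|_0^2-\|a^{n-1}\|_0^2)/(2\tau)$, the real part of the first term of (b) equals $(\|p_h^n\|_0^2-\|p_h^{n-1}\|_0^2)/(2\tau)$.
\item The same identity turns the gradient and mass terms into $(\|\nabla u_h^n\|_0^2-\|\nabla u_h^{n-1}\|_0^2)/(2\tau)$ and $(\|u_h^n\|_0^2-\|u_h^{n-1}\|_0^2)/(2\tau)$.
\end{itemize}
For the nonlinear terms I use the pointwise identity $\mathrm{Re}\big(\overline{u_h^n}\,\widehat{\partial_t u_h^n}\big)=(|u_h^n|^2-|u_h^{n-1}|^2)/(2\tau)$. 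With it, the cubic term becomes
\[
\Big(\tfrac12(|u_h^n|^2+|u_h^{n-1}|^2),\tfrac{|u_h^n|^2-|u_h^{n-1}|^2}{2\tau}\Big)=\frac{\|u_h^n\|_{0,4}^4-\|u_h^{n-1}\|_{0,4}^4}{4\tau}.
\]
This is precisely where the modified averaging of the Remark is needed. The coupling term becomes $\frac{1}{2\tau}\big(\overline{\varphi_h^n},|u_h^n|^2-|u_h^{n-1}|^2\big)$. I then multiply the real part of (b) by $2$.

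\textbf{Step 2: the $(\phi,\varphi)$ subsystem.} Take $w_{1h}=\partial_t\phi_h^n$ in (c) and $w_{2h}=\partial_t\varphi_h^n$ in (d); both are admissible since they lie in $V_0^h$. Because $(\cdot,\cdot)$ is symmetric on real functions, (c) gives $(\partial_t\phi_h^n,\partial_t\varphi_h^n)=-(\nabla\overline{\phi_h^n},\nabla\partial_t\phi_h^n)$. Substituting into (d) yields
\[
\frac{\|\nabla\phi_h^n\|_0^2-\|\nabla\phi_h^{n-1}\|_0^2}{2\tau}+\frac{\|\varphi_h^n\|_0^2-\|\varphi_h^{n-1}\|_0^2}{2\tau}+\Big(\tfrac12(|u_h^n|^2+|u_h^{n-1}|^2),\partial_t\varphi_h^n\Big)=0 .
\]

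\textbf{Step 3: combining.} Add the doubled identity from Step 1 to the identity from Step 2. The two coupling contributions then combine through the discrete product rule
\[
\Big(\tfrac{\varphi_h^n+\varphi_h^{n-1}}{2},\tfrac{|u_h^n|^2-|u_h^{n-1}|^2}{\tau}\Big)+\Big(\tfrac{|u_h^n|^2+|u_h^{n-1}|^2}{2},\tfrac{\varphi_h^n-\varphi_h^{n-1}}{\tau}\Big)=\frac{(\varphi_h^n,|u_h^n|^2)-(\varphi_h^{n-1},|u_h^{n-1}|^2)}{\tau}.
\]
Checking the remaining coefficients, the sum is exactly $(E^n-E^{n-1})/\tau=0$, and induction on $n$ gives (\ref{eq2.9}).

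The only delicate step is the bookkeeping of coefficients and signs in Step 3. The $(u,p)$ identity must be scaled by $2$ so that its coupling coefficient matches the one coming from the $(\phi,\varphi)$ subsystem, and the resulting coefficients must agree with the factors $1$, $\frac12$, $\frac12$ in $E^n$. Conjugation must also be handled carefully when extracting real parts in Step 1.
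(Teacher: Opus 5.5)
Your proof is correct and matches the paper's argument: the paper tests \eqref{eq1.3}(a)--(d) with $p_h^n-p_h^{n-1}$, $u_h^n-u_h^{n-1}$, $\phi_h^n-\phi_h^{n-1}$ and $\varphi_h^n-\varphi_h^{n-1}$, which are your test functions multiplied by $\tau$. It then takes real parts, doubles the $(u,p)$ identity, and adds it to the $(\phi,\varphi)$ identity so that the coupling terms combine into $(\varphi_h^n,|u_h^n|^2)-(\varphi_h^{n-1},|u_h^{n-1}|^2)$; your reading of $\|\cdot\|_1$ in $E^n$ as the $H^1$-seminorm is also what that computation actually produces.
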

\begin{proof}

Let $v_{2h}=u_h^n-u_h^{n-1}$ and $w_{1h}=\phi_h^n-\phi_h^{n-1}$ in (\ref{eq1.3})(b)(c), and $v_{1h}=p_h^n-p_h^{n-1}$ and $w_{2h}=\varphi_h^n-\varphi_h^{n-1}$ in (\ref{eq1.3})(a)(d). Taking the real part yields
\[
\begin{aligned}
&\|p_h^n\|_0^2-\|p_h^{n-1}\|_0^2+\|u_h^n\|_1^2-\|u_h^{n-1}\|_1^2+\|u_h^n\|_0^2-\|u_h^{n-1}\|_0^2
\\&+\frac{1}{2}\int_\Omega\bigg(|u_h^n|^2\varphi_h^n+|u_h^n|^2\varphi_h^{n-1}-|u_h^{n-1}|^2\varphi_h^n-|u_h^{n-1}|^2\varphi_h^{n-1}\bigg)d\mathbf{x}+\frac{1}{2}\|u_h^n\|_{0,4}^4-\frac{1}{2}\|u_h^{n-1}\|_{0,4}^4=0, 
\end{aligned}
\]
and
\[
\frac{1}{2}\|\phi_h^n\|_1^2-\frac{1}{2}\|\phi_h^n\|_1^2+\frac{1}{2}\|\varphi_h^n\|_0^2-\frac{1}{2}\|\varphi_h^n\|_0^2+\frac{1}{2}\int_{\Omega}\bigg(|u_h^n|^2\varphi_h^n-|u_h^n|^2\varphi_h^{n-1}+|u_h^{n-1}|^2\varphi_h^n-|u_h^{n-1}|^2\varphi_h^{n-1}\bigg)d\mathbf{x}=0.
\]
From the above two equations, it follows that 
\begin{equation}
\begin{aligned}
&\|u_h^n\|_1^2+\|u_h^n\|_0^2+\|p_h^n\|_0^2+\frac{1}{2}\|\varphi_h^n\|_0^2+\frac{1}{2}\|\phi_h^n\|_1^2+\frac{1}{2}\|u_h^n\|_{0,4}^4+(\varphi_h^n,|u_h^n|^2)\\
=& \|u_h^{n-1}\|_1^2+\|u_h^{n-1}\|_0^2+\|p_h^{n-1}\|_0^2+\frac{1}{2}\|\varphi_h^{n-1}\|_0^2+\frac{1}{2}\|\phi_h^{n-1}\|_1^2+\frac{1}{2}\|u_h^{n-1}\|_{0,4}^4+(\varphi_h^{n-1},|u_h^{n-1}|^2).
\end{aligned}
\end{equation}
Hence, \eqref{eq2.9} can be readily derived from the above equation, which further manifests that the newly constructed numerical scheme precisely retains the discrete energy balance at each temporal level.
\end{proof}

Subsequently, we proceed to rigorously investigate and establish the unique solvability of the fully discrete numerical algorithm.
\begin{theorem}
For any $n= 1, 2, 3, \ldots, N$, Algorithm \ref{alg1} admits a unique solution $
(u_h^n,p_h^n,\phi_h^n,\varphi_h^n)\in \mathcal{V}_0^h\times\mathcal{V}_0^h \times V_0^h\times V_0^h$.
\end{theorem}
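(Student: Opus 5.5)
The plan is to reduce one time step of Algorithm \ref{alg1} to a finite-dimensional nonlinear system in the unknowns $(u_h^n,\varphi_h^n)$, with $(u_h^{n-1},p_h^{n-1},\phi_h^{n-1},\varphi_h^{n-1})$ given. Existence will come from a Brouwer-type argument (the ``acute angle'' lemma), and uniqueness from an energy-type stability estimate for the difference of two solutions. The data enter only through the discrete energy of the previous theorem.

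\textbf{Step 1 (elimination).} Since $\mathcal{V}_0^h$ is the test space in (\ref{eq1.3})(a), we get $p_h^n=\frac{2}{\tau}(u_h^n-u_h^{n-1})-p_h^{n-1}$ exactly. Similarly, (\ref{eq1.3})(d) with test space $V_0^h$ gives
\[
\phi_h^n=\phi_h^{n-1}+\tau P_h\big(\overline{\varphi_h^n}+\tfrac12(|u_h^n|^2+|u_h^{n-1}|^2)\big),
\]
where $P_h$ is the $L^2$ projection onto $V_0^h$. Substituting these into (b) and (c) gives a system $G(u_h^n,\varphi_h^n)=0$ on the real Hilbert space $X=\mathcal{V}_0^h\times V_0^h$. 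The inner product is the real part of $(\cdot,\cdot)$, and we use the scaling $\langle (v,w),(v',w')\rangle=\mathrm{Re}(v,v')+\frac12(w,w')$.

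\textbf{Step 2 (existence).} Write $u=u_h^n$ and $\varphi=\varphi_h^n$. We test $G(u,\varphi)$ against the pair $\big(u-u_h^{n-1},\,\varphi-\varphi_h^{n-1}\big)$, mimicking exactly the test functions used in the proof of (\ref{eq2.9}). The computation in that proof shows
\[
\langle G(u,\varphi),(u-u_h^{n-1},\varphi-\varphi_h^{n-1})\rangle=\frac{c}{\tau}\big(\mathcal{E}(u,\varphi)-E^{n-1}\big),
\]
where $\mathcal{E}$ is the energy evaluated at $(u,\varphi)$ with $p,\phi$ given by Step 1. The indefinite term $(\varphi,|u|^2)$ is absorbed using
\[
\tfrac12\|\varphi\|_0^2+\tfrac12\|u\|_{0,4}^4+(\varphi,|u|^2)=\tfrac12\|\varphi+|u|^2\|_0^2\ge 0 .
\]
Hence $\mathcal{E}\ge \|u\|_1^2+\|u\|_0^2+\|p\|_0^2+\tfrac12\|\phi\|_1^2\to\infty$ as $\|u\|_1\to\infty$. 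Coercivity in $\varphi$ follows from $\|\phi\|_1$, because $\phi$ depends affinely on $\varphi$ through $\tau P_h\varphi/2$ and $P_h$ is the identity on $V_0^h$. The pairing is therefore positive on a large sphere centred at $(u_h^{n-1},\varphi_h^{n-1})$. $G$ is continuous, being polynomial in the nodal values, so Brouwer's lemma yields a zero of $G$.

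\textbf{Step 3 (uniqueness).} Let $(u,\varphi)$ and $(\tilde u,\tilde\varphi)$ be two solutions, and set $e=u-\tilde u$, $\eta=\varphi-\tilde\varphi$. Each solution satisfies the conservation identity, so both have energy $E^{n-1}$. The lower bound from Step 2, combined with the Sobolev embedding $H^1\hookrightarrow L^6$ for $d\le 3$, then bounds $\|u\|_1$, $\|\tilde u\|_1$, $\|\varphi\|_0$ and $\|\tilde\varphi\|_0$ by a constant depending only on $E^0$ and not on $h$. Subtracting the two systems and testing with $(e,\eta)$, the linear parts give
\[
\tfrac{2}{\tau^2}\|e\|_0^2+\tfrac12\|e\|_1^2+\tfrac{\tau}{4}\|\eta\|_0^2\ \text{-type terms}.
\]
The cubic differences $\frac12(|u|^2+|u_h^{n-1}|^2)u-\dots$ and $\overline{\varphi}\,\overline{u}-\dots$ are bounded by H\"older's inequality with $L^6\cdot L^6\cdot L^6\cdot L^2$ splittings, and $|u|^2-|\tilde u|^2=\mathrm{Re}\big((u+\tilde u)\hat e\big)$ is handled the same way. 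This gives $\|e\|_1^2+\|\eta\|_0^2\le C\tau\big(\|e\|_1^2+\|\eta\|_0^2\big)$ with $C$ independent of $h$, which forces $e=0$ and $\eta=0$. Uniqueness of $p_h^n$ and $\phi_h^n$ then follows from Step 1.

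\textbf{Main obstacle.} The hard step is Step 3. The nonlinearity is not monotone, so the contraction constant must be controlled using the conserved energy rather than inverse inequalities, which would make it depend on $h$. The $h$-independent a priori bounds from (\ref{eq2.9}) are what reduce the estimate to a fixed $O(\tau)$ perturbation of a coercive linear operator. The absorption of the coupling term $(\overline{\varphi_h^n}\,\overline{u_h^n},\cdot)$ against the $\eta$-terms is where I expect the most care to be needed. As written, the argument needs $C\tau<1$ for an $h$-independent $C$; I do not see how to remove this smallness of $\tau$, so this step establishes the theorem only for $\tau$ small, not unconditionally.
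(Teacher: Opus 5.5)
\textbf{Step 2 fails as written.} In the proof of (\ref{eq2.9}), $\varphi_h^n-\varphi_h^{n-1}$ is the test function for (\ref{eq1.3})(d), and you have already used (d) to eliminate $\phi_h^n$. Equation (\ref{eq1.3})(c), which is the second component of your $G$, is tested there with $\phi_h^n-\phi_h^{n-1}$. In your unknowns this test function is $\phi-\phi_h^{n-1}=\frac{\tau}{2}(\varphi+\varphi_h^{n-1})+\frac{\tau}{2}P_h(|u|^2+|u_h^{n-1}|^2)$. It depends nonlinearly on $u$, so it is not of the form $x-x_0$, and the acute-angle lemma cannot use it.

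If you pair with $\varphi-\varphi_h^{n-1}$ as you propose, the (c)-component gives $\frac{1}{\tau}\|\varphi-\varphi_h^{n-1}\|_0^2+(\nabla\overline{\phi},\nabla(\varphi-\varphi_h^{n-1}))$. This contains the sign-indefinite term $\frac{\tau}{4}(\nabla P_h(|u|^2+|u_h^{n-1}|^2),\nabla(\varphi-\varphi_h^{n-1}))$. The coupling term from (b), a multiple of $(\varphi+\varphi_h^{n-1},|u|^2-|u_h^{n-1}|^2)$, also no longer combines into $(\varphi,|u|^2)-(\varphi_h^{n-1},|u_h^{n-1}|^2)$. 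So the claimed identity with $\mathcal{E}-E^{n-1}$ is false, and the completed square $\frac12\|\varphi+|u|^2\|_0^2$ never appears.

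The repair is to take $(u,\phi)$ as the unknowns. Recover $p$ from (a), and recover $\varphi=\frac{2}{\tau}(\phi-\phi_h^{n-1})-\varphi_h^{n-1}-P_h(|u|^2+|u_h^{n-1}|^2)$ from (d). Then the real part of the (b)-residual tested with $2(u-u_h^{n-1})$, plus the (c)-residual tested with $\phi-\phi_h^{n-1}$, equals $\mathcal{E}-E^{n-1}$ exactly. Your lower bound gives $\mathcal{E}\ge\|\nabla u\|_0^2+\frac12\|\nabla\phi\|_0^2$, hence positivity on a large sphere centred at $(u_h^{n-1},\phi_h^{n-1})$. Brouwer then yields existence for every $\tau$ and $h$.

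\textbf{Step 3 also misses a term.} Since $\overline{\phi}-\overline{\tilde\phi}=\frac{\tau}{4}\big(\eta+P_h(|u|^2-|\tilde u|^2)\big)$, testing the difference of the (c)-equations with $\eta$ gives $\frac{1}{\tau}\|\eta\|_0^2+\frac{\tau}{4}\|\nabla\eta\|_0^2+\frac{\tau}{4}(\nabla P_h(|u|^2-|\tilde u|^2),\nabla\eta)=0$. In the last term the quadratic difference sits under a gradient, so no $L^6\cdot L^6\cdot L^6\cdot L^2$ splitting reaches it. Bounding it needs $L^\infty$ or $W^{1,3}$ control of $u$, which the energy does not give for $d=2,3$. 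Inverse estimates would put $h$ back into the step restriction, contradicting your $h$-independent constant.

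Test instead with $\zeta=\eta+P_h(|u|^2-|\tilde u|^2)=\frac{4}{\tau}(\overline{\phi}-\overline{\tilde\phi})$. This gives $\frac{\tau}{4}\|\nabla\zeta\|_0^2+\frac{1}{\tau}\|\zeta\|_0^2=\frac{1}{\tau}(P_h(|u|^2-|\tilde u|^2),\zeta)$. Hence $\|\zeta\|_0\le\|\,|u|^2-|\tilde u|^2\|_0$ and $\|\eta\|_0\le 2\|\,|u|^2-|\tilde u|^2\|_0\le C\|e\|_{0,4}$. With this bound, every nonlinear term of the (b)-difference is bounded by $C\|e\|_{0,4}^2+C\|e\|_1\|e\|_0$. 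The Gagliardo--Nirenberg estimate $\|e\|_{0,4}^2\le\epsilon\|e\|_1^2+C_\epsilon\|e\|_0^2$ for $d\le 3$ then lets you absorb everything into $\frac{2}{\tau^2}\|e\|_0^2+\frac12\|e\|_1^2$ once $\tau\le\tau_0(E^0)$, independently of $h$.

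For comparison, the paper freezes the nonlinear terms at a given iterate in the map $\mathcal{T}$. It then appeals to positive definiteness of the resulting linear systems for small $\tau$, without a contraction or fixed-point estimate. Your Brouwer-plus-stability route, once repaired, is therefore more complete, and the paper's proof also assumes small $\tau$.
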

\begin{proof}
we define an operator $\mathcal{T}:(\mathcal{V}_0^h)^2\times(V_0^h)^2\rightarrow(\mathcal{V}_0^h)^2\times(V_0^h)^2$, such that for given $u_h^{n-1}$,  $p_h^{n-1}$, $U_h$, $P_h\in\mathcal{V}_0^h$, $\varphi_h^{n-1}$, $ \phi_h^{n-1}$, $X_h$,  $Y_h\in V_0^h$, denote $(\hat{U}_h, \hat{P}_h, \hat{X}_h, \hat{Y}_h)=\mathcal{T}(U_h, P_h, X_h, Y_h)$ such that
\begin{equation}
	\label{eq1.4}
\left\{\begin{aligned}
&(\mathrm{a})\quad \frac{1}{\tau}(\hat{U}_h-u_h^{n-1},v_{1h})=\frac{1}{2}(\hat{P}_h^n+p_h^{n-1},v_{1h}), &\forall v_{1h}\in\mathcal{V}_0^h , \\
&(\mathrm{b})\quad \frac{1}{\tau}(\hat{P}_h-p_h^{n-1},v_{2h})+\frac{1}{2}(\nabla(\hat{U}_h+u_h^{n-1}),\nabla v_{2h})+\frac{1}{2}((\hat{U}_h+u_h^{n-1}), v_{2h})&
\\	&\qquad\quad=-\frac{1}{4}(((U_h)^2+(u_h^{n-1})^2)(U_h+u_h^{n-1}),v_{2h})-\frac{1}{4}((X_h+\varphi_h^{n-1})(U_h+u_h^{n-1}),v_{2h}), &\forall v_{2h}\in\mathcal{V}_0^h, \\	
&(\mathrm{c})\quad \frac{1}{\tau}(\hat{X}_h-\varphi_h^{n-1},w_{1h})=\frac{1}{2}(\nabla(\hat{Y}+\phi_h^{n-1}),\nabla w_{1h}), &\forall w_{1h}\in V_0^h, \\	&(\mathrm{d})\quad \frac{1}{\tau}(\hat{Y}-\phi_h^{n-1}, w_{2h})-\frac{1}{2}(\hat{X}_h+\varphi_h^{n-1}, w_{2h})=\frac{1}{2}((U_h)^2+(u_h^{n-1})^2,w_{2h}), &\forall w_{2h}\in V_0^h.
	\end{aligned}
	\right.
\end{equation}

Let $\{\hat{l}_i\}_{i=1}$ and $\{l_i\}_{i=1}$ be a group basis of $\mathcal{V}_0^h$ and $ {V}_0^h$, respectively, $\hat{U}_h=\sum\limits_{i=1}^k\hat{u}_i\hat{l}_i$, $\hat{P}_h=\sum\limits_{i=1}^k\hat{p}_i\hat{l}_i$, $\hat{X}_h=\sum\limits_{i=1}^k\hat{x}_il_i$, $\hat{Y}_h=\sum\limits_{i=1}^k\hat{y}_il_i$. We can rewrite (\ref{eq1.4}) in matrix form:
\begin{equation}
	\label{eq1.5}
	\left\{\begin{aligned}
&(\mathrm{a})\quad  2\hat{A}\hat{\boldsymbol{U}}=\tau \hat{A}\hat{\boldsymbol{P}}+\boldsymbol{Z}^1, \\
&(\mathrm{b})\quad  2\hat{A}\hat{\boldsymbol{P}}+\tau \hat{B}\hat{\boldsymbol{U}}+\tau\hat{A}\hat{\boldsymbol{U}}=\boldsymbol{Z}^2,\\
&(\mathrm{c})\quad 2A\hat{\boldsymbol{X}}=\tau B\hat{\boldsymbol{Y}}+\boldsymbol{Z}^3, \\
&(\mathrm{d})\quad  2A\hat{\boldsymbol{Y}}-\tau A\hat{\boldsymbol{X}}=\boldsymbol{Z}^4,
	\end{aligned}
	\right.
\end{equation}
where 
\[ 
\begin{aligned}
&\hat{A}=(\hat{l}_i, \hat{l}_j)_{k\times k}, \quad \hat{B}=(\nabla\hat{l}_i,\nabla\hat{l}_j)_{k\times k}, \\
&A=(l_i,l_j)_{k\times k},\quad B=(\nabla l_i,\nabla l_j)_{k\times k},
 \end{aligned}
\]
 \[
   \begin{aligned}
 &\hat{\boldsymbol{U}}=(\hat{u}_1,\hat{u}_2,\ldots,\hat{u}_k)^T, \quad 
 \hat{\boldsymbol{P}}=(\hat{p}_1,\hat{p}_2,\ldots,\hat{p}_k)^T,\\
 & \hat{\boldsymbol{X}}=(\hat{x}_1,\hat{x}_2,\ldots,\hat{x}_k)^T, \quad 
  \hat{\boldsymbol{Y}}=(\hat{y}_1,\hat{y}_2,\ldots,\hat{y}_k)^T, 
 \end{aligned}
  \]
 \[
 \boldsymbol{Z^1}=(z^1_j)_{k\times1}, \quad 
 \boldsymbol{Z^2}=(z^2_j)_{k\times1}, \quad 
 \boldsymbol{Z^3}=(z^3_j)_{k\times1}, \quad 
 \boldsymbol{Z^4}=(z^4_j)_{k\times1},
 \]
  \[
  \begin{aligned}
  z^1_j=&\tau(p_h^{n-1},\hat{l}_j)+2(u_h^{n-1},\hat{l}_j),
\\
    z^2_j=&(-\frac{\tau}{2}((U_h)^2+(u_h^{n-1})^2)(U_h+u_h^{n-1})+2p_h^{n-1}-\tau u_h^{n-1},\\
    &-\frac{\tau}{2}((X_h+\varphi_h^{n-1})(U_h+u_h^{n-1}),\hat{l}_j)-\frac{1}{2}(\nabla u_h^{n-1},\nabla \hat{l}_j),
   \\ 
   z^3_j=&\tau(\phi_h^{n-1},l_j)+2(\varphi_h^{n-1},l_j),
\\
     z^4_j=&\tau((U_h^n)^2+(u_h^{n-1})^2,l_j)+2(\phi_h^{n-1},l_j)+\tau(\varphi_h^{n-1},l_j).
     \end{aligned}
     \]

Rearranging Equation (\ref{eq1.5}) yields  
\begin{equation}
	\label{eq1.6}
	\left\{\begin{aligned}
&(\mathrm{a})\quad	\hat{\boldsymbol{U}}=\frac{\tau}{2} \hat{\boldsymbol{P}}+\frac{1}{2}\hat{A}^{-1}\boldsymbol{Z}^1, \\
&(\mathrm{b})\quad (2\hat{A}+\frac{\tau^2}{2}\hat{A}+\frac{\tau^2}{2} \hat{B})\hat{\boldsymbol{P}}=\boldsymbol{Z}^2-\frac{\tau}{2}\boldsymbol{Z}^1-\frac{\tau}{2}\hat{B}\hat{A}^{-1}\boldsymbol{Z}^1,\\
&(\mathrm{c})\quad \hat{\boldsymbol{X}}=\frac{\tau}{2} A^{-1}B\hat{\boldsymbol{Y}}+\frac{1}{2}A^{-1}\boldsymbol{Z}^3, \\
&(\mathrm{d})\quad (2A-\frac{\tau^2}{2} B)\hat{\boldsymbol{Y}}=\boldsymbol{Z}^4+\frac{\tau}{2}\boldsymbol{Z}^3,
	\end{aligned}
	\right.
\end{equation}
For sufficiently small time step $\tau$, the coefficient matrices $A$ and $B$ satisfy the positive definiteness condition, which guarantees the unique solvability of the numerical solution generated by Algorithm \ref{alg1}.
\end{proof}

\section{Superconvergence analysis}
Throughout this section, we suppose that the solution $(u, p, \varphi, \phi)$ of system (\ref{eq2.1}) satisfies the following the following regularity property :
\[
\begin{aligned}
&\|u\|_{\mathcal{L}^\infty(J;\mathcal{H}^3(\Omega))}+\|u_t\|_{\mathcal{L}^2(J;\mathcal{H}^2(\Omega))}+\|u_{tt}\|_{\mathcal{L}^2(J;\mathcal{H}^2(\Omega))}+\|u_{ttt}\|_{\mathcal{L}^2(J;\mathcal{H}^1(\Omega))}\\
&\qquad+\|p\|_{\mathcal{L}^\infty(J;\mathcal{H}^2(\Omega))}+\|p_t\|_{\mathcal{L}^2(J;\mathcal{H}^2(\Omega))}
+\|p_{tt}\|_{\mathcal{L}^2(J;\mathcal{H}^1(\Omega))}+\|p_{ttt}\|_{\mathcal{L}^2(J;\mathcal{L}^2(\Omega))}\\
&\qquad+\|\varphi\|_{L^\infty(J;H^2(\Omega))}+\|\varphi_t\|_{L^2(J;H^2(\Omega))}+\|\varphi_{tt}\|_{L^2(J;H^1(\Omega))}+\|\varphi_{ttt}\|_{L^2(J;L^2(\Omega))}\\
&\qquad+\|\phi\|_{L^\infty(J;H^3(\Omega))}+\|\phi_{tt}\|_{L^2(J;H^1(\Omega))}+\|\phi_{ttt}\|_{L^2(J;L^2(\Omega))}\leq C.
\end{aligned}
\]

Let $I_h : H^2(\Omega) \to V_0^h$ be the canonical Lagrange interpolation operator, and introduce the Ritz projection $R_h : H_0^1(\Omega) \to V_0^h$ such that
\begin{equation}\label{eq2.2}
(\nabla(u - R_h u), \nabla v_h) = 0, \quad \forall v_h \in V_0^h.
\end{equation}
It is well-known that $R_h$ admits the following a priori error estimate (see \cite{thomee2006galerkin} )
\begin{equation}
\|u - R_h u\|_0 \leq C h^r \|u\|_k, \quad k = 1, 2, \quad \forall u \in H_0^1(\Omega) \cap H^2(\Omega), 
\end{equation}
and
\begin{equation}
\|\nabla R_h u\|_0 \leq C \|\nabla u\|_0. 
\end{equation}
For simplification of analysis,  the following initial approximate value $(u_h^0,p_h^0,\varphi_h^0,\phi_h^0)=(R_hu_0,  R_hp_0, R_h\varphi_0, R_h\phi_0)$ is considered.
In addition, to further exploit the superconvergence properties, the following estimate in \cite{dongyang2015new} will be used:
\begin{equation}\label{eq2.5}
\|I_h u - R_h u\|_1 \leq C h^2 \|u\|_3,\quad \forall u\in H^3(\Omega).
\end{equation}

To establish the convergence of the fully discrete system (\ref{eq1.3}), the following inductive hypothesis is introduced.

\begin{assumption}\label{assumption3.1}
There exists a mesh parameter $h_0$ $(0 < h_0 < 1)$, such that, for $0 < h \leq h_0$, there holds
\begin{equation}
\|u^m-u_h^m\|_{0,\infty}<1,\quad 0\leq m\leq N.
\end{equation}
\end{assumption}

At time $t=t_{n-\frac{1}{2}}$, it follows from (\ref{eq1.2}) that
\begin{equation}\label{eq1.7}
\left\{\begin{aligned}
&(\mathrm{a})\quad	(\overline{p^n},v_{1h})=(\partial_tu^n,v_{1h})+(u_t^{n-\frac{1}{2}}-\partial_tu^n,v_{1h})+(\overline{p^n}-p^{n-\frac{1}{2}},v_{1h}), &\forall v_{1h}\in\mathcal{V}_0^h, 
\\&(\mathrm{b})\quad	(\nabla\overline{p^n},\nabla v_{1h})=(\nabla\partial_tu^n,\nabla v_{1h})+(\nabla (u_t^{n-\frac{1}{2}}-\partial_tu^n),\nabla v_{1h})+(\nabla(\overline{p^n}-p^{n-\frac{1}{2}}),\nabla v_{1h}), &\forall v_{1h}\in\mathcal{V}_0^h, \\
&(\mathrm{c})\quad	(\partial_tp^n,v_{2h})+(\nabla\overline{u^n},\nabla v_{2h})+(\overline{u^n},v_{2h})+(\overline{\varphi^n}\,\overline{u^n},v_{2h})+(\frac{1}{2}((u^n)^2+(u^{n-1})^2)\overline{u^n},v_{2h})\\
&\qquad=(\partial_tp^n-p_t^{n-\frac{1}{2}},v_{2h})+(\nabla(\overline{u^n}-u^{n-\frac{1}{2}}),\nabla v_{2h})+(\overline{u^n}-u^{n-\frac{1}{2}},v_{2h})
\\
&\qquad\quad+(\overline{\varphi^n}\,\overline{u^n}-\varphi^{n-\frac{1}{2}}\,u^{n-\frac{1}{2}},v_{2h})+(\frac{1}{2}(|u^n|^2+|u^{n-1}|^2)\overline{u^n}-(u^{n-\frac{1}{2}})^3,v_{2h}), &\forall v_{2h}\in\mathcal{V}_0^h,\\
&(\mathrm{d})\quad	(\nabla\overline{\phi^n},\nabla w_{1h})+(\partial_t\varphi^n,w_{1h})=(\nabla(\overline{\phi^n}-\phi^{n-\frac{1}{2}}),\nabla w_{1h})+(\partial_t\varphi^n-\varphi_t^{n-\frac{1}{2}},w_{1h}), &\forall w_{1h}\in V_0^h,\\
&(\mathrm{e})\quad	(\nabla\partial_t\phi^n,\nabla w_{2h})-(\nabla\overline{\varphi^n},\nabla w_{2h})-\frac{1}{2}(\nabla(|u^n|^2+|u^{n-1}|^2),\nabla w_{2h})\\
&\qquad=(\nabla(\partial_t\phi^n-\phi_t^{n-\frac{1}{2}}),\nabla w_{2h})+(\nabla(\varphi^{n-\frac{1}{2}}-\overline{\varphi^n}),\nabla w_{2h})\\
&\qquad\quad+(\nabla(|u^{n-\frac{1}{2}}|^2-\frac{1}{2}(|u^n|^2+|u^{n-1}|^2)),\nabla w_{2h}),&\forall w_{2h}\in V_0^h.
	\end{aligned}
	\right.
\end{equation}

Set 
\[
u^n-u_h^n=u^n-R_hu^n+R_hu^n-u_h^n\overset{\triangle}{=}\eta_u^n-\theta_u^n,
\]
\[
p^n-p_h^n=p^n-R_hp^n+R_hp^n-p_h^n\overset{\triangle}{=}\eta_p^n-\theta_p^n,
\]
\[
\phi^n-\phi_h^n=\phi^n-R_h\phi^n+R_h\phi^n-\phi_h^n\overset{\triangle}{=}\eta_\phi^n-\theta_\phi^n,
\]
\[
\varphi^n-\varphi_h^n=\varphi^n-R_h\varphi^n+R_h\varphi^n-\varphi_h^n\overset{\triangle}{=}\eta_\varphi^n-\theta_\varphi^n.
\]

From (\ref{eq1.3}) to (\ref{eq1.7}), we have the following error residual equations
\begin{equation}\label{eq1.8}
\left\{\begin{aligned}
&(\mathrm{a})\quad		(\overline{\theta_p^n},v_{1h})=-(\overline{\eta_p^n},v_{1h})+(\partial_t\eta_u^n,v_{1h})+(\partial_t\theta_u^n,v_{1h})
\\
&\qquad\quad+(u_t^{n-\frac{1}{2}}-\partial_tu^n,v_{1h})+(\overline{p^n}-p^{n-\frac{1}{2}},v_{1h}), &\forall v_{1h}\in\mathcal{V}_0^h, \\
&(\mathrm{b})\quad	(\nabla\overline{\theta_p^n},\nabla 	v_{1h})=-(\nabla\overline{\eta_p^n},\nabla v_{1h})+(\nabla\partial_t\eta_u^n,\nabla v_{1h})+(\nabla\partial_t\theta_u^n,\nabla v_{1h})
\\
&\qquad\quad+(\nabla(u_t^{n-\frac{1}{2}}-\partial_tu^n),\nabla v_{1h})+(\nabla(\overline{p^n}-p^{n-\frac{1}{2}}),\nabla v_{1h}), &\forall v_{1h}\in\mathcal{V}_0^h, \\
&(\mathrm{c})\quad	(\partial_t\theta_p^n,v_{2h})+(\nabla\overline{\theta_u^n},\nabla v_{2h})+(\overline{\theta_u^n},v_{2h})=-(\partial_t\eta_p^n,v_{2h})-(\nabla\overline{\eta_p^n},\nabla v_{2h})-(\overline{\eta_u^n},v_{2h})
		\\
&\qquad\quad-(\overline{\varphi^n}\,\overline{u^n}-\overline{\varphi_h^n}\,\overline{u_h^n},v_{2h})-\frac{1}{2}((|u^n|^2+|u^{n-1}|^2)\overline{u^n}-(|u_h^n|^2+|u_h^{n-1}|^2)\overline{u_h^n},v_{2h})
\\
&\qquad\quad+(\partial_tp^n-p_t^{n-\frac{1}{2}},v_{2h})+(\nabla(\overline{u^n}-u^{n-\frac{1}{2}}),\nabla v_{2h})+(\overline{u^n}-u^{n-\frac{1}{2}},v_{2h})
\\
&\qquad\quad+(\overline{\varphi^n}\,\overline{u^n}- \varphi^{n-\frac{1}{2}} \, u^{n-\frac{1}{2}} ,v_{2h})+(\frac{1}{2}(|u^n|^2+|u^{n-1}|^2)\overline{u^n}-(u^{n-\frac{1}{2}})^3,v_{2h}), &\forall v_{2h}\in\mathcal{V}_0^h,\\
&(\mathrm{d})\quad	 (\nabla\overline{\theta_\phi^n},\nabla w_{1h})+(\partial_t\theta_\varphi^n,w_{1h})=-(\nabla\overline{\eta_\phi^n},\nabla w_{1h})-(\partial_t\eta_\varphi^n,w_{1h}^n)\\
&\qquad\quad+(\nabla(\overline{\phi^n}-\phi^{n-\frac{1}{2}}),\nabla w_{1h})+(\partial_t\varphi^n-\varphi_t^{n-\frac{1}{2}},w_{1h}), &\forall w_{1h}\in V_0^h,\\
&(\mathrm{e})\quad	(\nabla\partial_t\theta_\phi^n,\nabla w_{2h})=-(\nabla\partial_t\eta_\phi^n,\nabla w_{2h}^n)+(\nabla\overline{\theta_\varphi^n},\nabla w_{2h})+(\nabla\overline{\eta_\varphi^n},\nabla w_{2h})\\
&\qquad\quad-\frac{1}{2}(\nabla(|u^n|^2+|u^{n-1}|^2-|u_h^n|^2-|u_h^{n-1}|^2),\nabla w_{2h})\\
&\qquad\quad+(\nabla(\partial_t\phi^n-\phi_t^{n-\frac{1}{2}}),\nabla w_{2h})
+(\nabla(\varphi^{n-\frac{1}{2}}-\overline{\varphi^n}),\nabla w_{2h})
\\
&\qquad\quad+(\nabla(|u^{n-\frac{1}{2}}|^2-\frac{1}{2}(|u^n|^2+|u^{n-1}|^2)),\nabla w_{2h}),&\forall w_{2h}\in V_0^h.
	\end{aligned}
	\right.
\end{equation}

For Algorithm \ref{alg1}, we have the following convergence theorem.
\begin{theorem}\label{theorem3.1}
Let $(u,p,\phi,\varphi)$ and $
(u_h^n,p_h^n,\phi_h^n,\varphi_h^n)$ denote the exact and fully discrete solutions of (\ref{eq1.1}) and (\ref{eq1.3}), respectively.   The following supercloseness estimate holds:
\begin{equation}\label{1.15}
\|I_hp^n-p_h^n\|_0+\|I_hu^n-u_h^n\|_1+\|I_h\phi^n-\phi_h^n\|_1+\|I_h\varphi^n-\varphi_h^n\|_0\leq C(h^2+\tau^2).
\end{equation}
\end{theorem}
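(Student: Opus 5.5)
We prove the supercloseness estimate (\ref{1.15}) by an energy argument on the discrete errors $\theta_u^n,\theta_p^n,\theta_\phi^n,\theta_\varphi^n$ in the residual system (\ref{eq1.8}), followed by the triangle inequality
\[
\|I_hu^n-u_h^n\|_1\le \|I_hu^n-R_hu^n\|_1+\|\theta_u^n\|_1 ,
\]
and its analogues for $p,\phi,\varphi$. For $u$ and $\phi$ the first term is $O(h^2)$ by (\ref{eq2.5}), since $u,\phi\in L^\infty(J;H^3)$. For $p$ and $\varphi$ only an $L^2$ bound is needed, and $\|I_hw-R_hw\|_0\le Ch^2\|w\|_2$ follows from the interpolation and Ritz estimates with $H^2$ regularity alone. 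It therefore suffices to prove
\[
\|\theta_p^n\|_0+\|\theta_u^n\|_1+\|\theta_\phi^n\|_1+\|\theta_\varphi^n\|_0\le C(h^2+\tau^2).
\]
The initial errors vanish because $u_h^0=R_hu_0$, and similarly for the other variables.

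\textbf{Testing the error equations.} We mimic the proof of the discrete energy conservation (\ref{eq2.9}).
\begin{itemize}
\item In (\ref{eq1.8})(c) take $v_{2h}=\theta_u^n-\theta_u^{n-1}=\tau\partial_t\theta_u^n$. Use (\ref{eq1.8})(a) with $v_{1h}=\theta_p^n-\theta_p^{n-1}$ to exchange $\partial_t\theta_u^n$ for $\overline{\theta_p^n}$ plus consistency terms, and take real parts.
\item In (\ref{eq1.8})(d) take $w_{1h}=\theta_\phi^n-\theta_\phi^{n-1}$, and in (\ref{eq1.8})(e) take $w_{2h}=\theta_\varphi^n-\theta_\varphi^{n-1}$. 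Pairing these produces the telescoping quantities $\frac12\|\nabla\theta_\phi^n\|_0^2$ and $\frac12\|\theta_\varphi^n\|_0^2$, with the cross terms cancelling as in (\ref{eq2.9}).
\end{itemize}
The left-hand side becomes $\mathcal{E}^n-\mathcal{E}^{n-1}$, where
\[
\mathcal{E}^n=\|\theta_p^n\|_0^2+\|\nabla\theta_u^n\|_0^2+\|\theta_u^n\|_0^2+\tfrac12\|\nabla\theta_\phi^n\|_0^2+\tfrac12\|\theta_\varphi^n\|_0^2 .
\]

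\textbf{Bounding the right-hand side.} There are three groups of terms.
\begin{itemize}
\item All $(\nabla\eta,\nabla v_h)$ terms vanish by (\ref{eq2.2}).
\item The terms $(\eta,v_h)$ and $(\partial_t\eta,v_h)$ are bounded by $Ch^2$ times the norms of $u,u_t,p_t,\varphi_t$ in $H^2$. This is the point where only $H^2$ regularity of $u_t,p,\varphi$ is used.
\item The temporal truncation terms, such as $u_t^{n-1/2}-\partial_tu^n$ and $\overline{p^n}-p^{n-1/2}$, are bounded by Taylor expansion with integral remainder. This gives $C\tau^{3}\int_{t_{n-1}}^{t_n}\|\partial_t^3(\cdot)\|^2\,dt$ contributions to the squared sums, which is consistent with the assumed $L^2(J;\cdot)$ bounds on third time derivatives.
\end{itemize}
Summation by parts in $n$ is needed for terms like $(\partial_t\eta_p^n,\theta_u^n-\theta_u^{n-1})$. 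Without it a factor $\tau^{-1}$ would appear. Moving the discrete time difference onto $\eta_p$ gives $\partial_{tt}\eta_p$, which is controlled by $\|p_{tt}\|$ type norms.

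\textbf{Nonlinear terms (main obstacle).} The hard part is the coupling and cubic terms
\[
\overline{\varphi^n}\,\overline{u^n}-\overline{\varphi_h^n}\,\overline{u_h^n},\qquad \tfrac12\bigl(|u^n|^2+|u^{n-1}|^2\bigr)\overline{u^n}-\tfrac12\bigl(|u_h^n|^2+|u_h^{n-1}|^2\bigr)\overline{u_h^n},
\]
and the term $\nabla(|u|^2-|u_h|^2)$ in (\ref{eq1.8})(e).
\begin{itemize}
\item Write $u_h=u-\eta_u+\theta_u$ and expand each difference.
\item Assumption \ref{assumption3.1} together with $u\in L^\infty$ gives $\|u_h^m\|_{0,\infty}\le C$. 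The inverse estimate (for $\theta_u$) and the Ritz estimates give $\|u_h^m\|_{0,4}$ and $\|\nabla u_h^m\|$ bounds, so every difference is bounded pointwise by $C(|\eta_u|+|\theta_u|+|\eta_\varphi|+|\theta_\varphi|)$.
\item Products with $\tau\partial_t\theta_u$ must not lose a factor of $\tau$. We first replace $\partial_t\theta_u^n$ via (\ref{eq1.8})(a) by $\overline{\theta_p^n}$ plus $O(h^2+\tau^2)$, and then use Cauchy--Schwarz and Young.
\item For the $\nabla(|u|^2-|u_h|^2)$ term tested with $\theta_\varphi^n-\theta_\varphi^{n-1}$, we sum by parts in time so that the gradient can be moved. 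This uses $\nabla u_h$ bounded in $L^4$ (from the $H^3$ regularity of $u$ and the superclose bound in the induction) and the bound $\|\nabla\theta_u\|$.
\end{itemize}

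\textbf{Closing the argument.} Collecting everything and summing over $n$ gives
\[
\mathcal{E}^n\le C(h^2+\tau^2)^2+C\tau\sum_{m\le n}\mathcal{E}^m .
\]
The discrete Gronwall inequality, valid for $\tau$ small, then gives $\mathcal{E}^n\le C(h^2+\tau^2)^2$.

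Finally we recover Assumption \ref{assumption3.1} at step $n$ by induction. Write
\[
\|u^n-u_h^n\|_{0,\infty}\le \|u^n-I_hu^n\|_{0,\infty}+\|I_hu^n-u_h^n\|_{0,\infty},
\]
and bound the second term by the inverse inequality as $Ch^{-d/2}\|I_hu^n-u_h^n\|_0\le Ch^{-d/2}(h^2+\tau^2)$. This is less than $1$ for small $h$ under a mild condition such as $\tau=O(h^{d/4})$, which closes the induction.
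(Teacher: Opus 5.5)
\textbf{The energy identity you propose for the $\phi$--$\varphi$ block fails.} Your reduction to the Ritz errors $\theta$ via (\ref{eq2.5}) and $\|I_hw-R_hw\|_0\le Ch^2\|w\|_2$, your Gronwall closure and your induction all follow the paper's outline.

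Equation (\ref{eq1.8})(e) is the \emph{gradient} form of the $\phi_t$-equation. Testing it with $w_{2h}=\theta_\varphi^n-\theta_\varphi^{n-1}$ produces two things: the cross term $\tau(\nabla\partial_t\theta_\phi^n,\nabla\partial_t\theta_\varphi^n)$ and the increment $\frac12(\|\nabla\theta_\varphi^n\|_0^2-\|\nabla\theta_\varphi^{n-1}\|_0^2)$. Testing (\ref{eq1.8})(d) with $\theta_\phi^n-\theta_\phi^{n-1}$ produces the $L^2$ cross term $\tau(\partial_t\theta_\varphi^n,\partial_t\theta_\phi^n)$. The two cross terms live in different inner products, so no combination of the equations cancels them. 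Nothing of the form $\frac12\|\theta_\varphi^n\|_0^2$ telescopes; instead you get an $H^1$-seminorm of $\theta_\varphi$ that is not in your $\mathcal{E}^n$. For the same reason, the nonlinear term $\nabla(|u|^2-|u_h|^2)$ ends up paired with $\nabla(\theta_\varphi^n-\theta_\varphi^{n-1})$, which nothing in your energy controls. Summation by parts in time moves a difference quotient, not a spatial gradient. The cancellation in the proof of (\ref{eq2.9}) works only because there the $\phi_t$-equation is used in its $L^2$ form (\ref{eq1.3})(d).

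\textbf{How the paper avoids this.} It tests with time averages instead of increments: $v_{1h}=\overline{\theta_u^n}$ in (\ref{eq1.8})(a),(b), $v_{2h}=\overline{\theta_p^n}$ in (c), $w_{1h}=\overline{\theta_\varphi^n}$ in (d), and $w_{2h}=\overline{\theta_\phi^n}$ in (e). The gradient forms (b) and (e) are there precisely so that $(\nabla\overline{\theta_u^n},\nabla\overline{\theta_p^n})$, $(\overline{\theta_u^n},\overline{\theta_p^n})$ and $(\nabla\overline{\theta_\phi^n},\nabla\overline{\theta_\varphi^n})$ cancel. Every residual is then paired with a quantity whose norm lies in $\|\theta_p\|_0^2+\|\theta_u\|_1^2+\|\theta_\phi\|_1^2+\|\theta_\varphi\|_0^2$; in particular, the nonlinear gradient term is tested with $\nabla\overline{\theta_\phi^n}$.

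This pairing is also what justifies your claim that the truncation terms contribute only $C\tau^3\int_{t_{n-1}}^{t_n}\|\partial_t^3(\cdot)\|^2\,ds$. Against increments such as $\theta_p^n-\theta_p^{n-1}$ (from (a)) or $\theta_\varphi^n-\theta_\varphi^{n-1}$ there is no factor $\tau$. Summation by parts then puts one more time derivative on each truncation error and on each $\eta$ term, for example $\partial_{tt}\eta_\phi$. If you want to keep increment test functions, you must derive an $L^2$-form error equation for the $\phi_t$-equation directly from (\ref{eq1.3})(d). You must then carry out all of those summations by parts, with the extra time regularity they require.

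\textbf{Two smaller points.} First, $\tau=O(h^{d/4})$ only gives $h^{-d/2}\tau^2=O(1)$, so you need $\tau=o(h^{d/4})$. Second, the level-$n$ energy estimate already uses $\|u_h^n\|_{0,\infty}\le C$ in the nonlinear terms, so deducing that bound from the level-$n$ estimate is circular as written. The paper instead argues from the bound at level $n-1$.
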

\begin{proof}

Let $v_{1h}=\overline{\theta_u^n}$, $v_{2h}=\overline{\theta_p^n}$ in (\ref{eq1.8})(a), (b), (c), we have
\begin{equation}\label{eq1.9}
\begin{aligned}
&(\partial_t\theta_p^n,\overline{\theta_p^n})+(\nabla\partial_t\theta_u^n,\nabla\overline{\theta_u^n})+(\partial_t\theta_u^n,\overline{\theta_u^n})\\
=&(\nabla\overline{\eta_p^n},\nabla\overline{\theta_u^n})-(\nabla\partial_t\eta_u^n,\nabla\overline{\theta_u^n})-(\nabla(u_t^{n-\frac{1}{2}}-\partial_tu^n),\nabla\overline{\theta_u^n})-(\nabla(\overline{p^n}-p^{n-\frac{1}{2}}),\nabla\overline{\theta_u^n})\\
&+(\overline{\eta_p^n},\overline{\theta_u^n})-(\partial_t\eta_u^n,\overline{\theta_u^n})-(u_t^{n-\frac{1}{2}}-\partial_tu^n,\overline{\theta_u^n})-(\overline{p^n}-p^{n-\frac{1}{2}},\overline{\theta_u^n})
-(\partial_t\eta_p^n,\overline{\theta_p^n})
\\
&-(\nabla\overline{\eta_p^n},\nabla\overline{\theta_p^n})-(\overline{\eta_u^n},\overline{\theta_p^n})-(\overline{\varphi^n}\,\overline{u^n}-\overline{\varphi_h^n}\,\overline{u_h^n},\overline{\theta_p^n})-\frac{1}{2}((|u^n|^2+|u^{n-1}|^2)\overline{u^n}-(|u_h^n|^2+|u_h^{n-1}|^2)\overline{u_h^n},\overline{\theta_p^n})
\\
&+(\partial_tp^n-p_t^{n-\frac{1}{2}},\overline{\theta_p^n})+(\nabla(\overline{u^n}-u^{n-\frac{1}{2}}),\nabla\overline{\theta_p^n})+(\overline{u^n}-u^{n-\frac{1}{2}},\overline{\theta_p^n})+(\overline{\varphi^n}\,\overline{u^n}-\varphi^{n-\frac{1}{2}}\,u^{n-\frac{1}{2}},\overline{\theta_p^n})
\\
&+(\frac{1}{2}(|u^n|^2+|u^{n-1}|^2)\overline{u^n}-(u^{n-\frac{1}{2}})^3,\overline{\theta_p^n})\overset{\triangle}{=}\sum_{i=1}^{18}H_i.
\end{aligned}
\end{equation}

We begin by estimating the terms on the right-hand side of (\ref{eq1.9}). For $H_5$ and $H_{11}$, we know that
\begin{equation}\label{eq1.11}
\begin{aligned}
H_5+H_{11}&=(\overline{\eta_p^n},\overline{\theta_u^n})-(\overline{\eta_u^n},\overline{\theta_u^n})\leq Ch^2\|\overline{p^n}\|_2\|\overline{\theta_u^n}\|_0+Ch^2\|\overline{u^n}\|_2\|\overline{\theta_p^n}\|_0\\
&\leq Ch^4(\|p^n\|_2^2+\|p^{n-1}\|_2^2+\|u^n\|_2^2+\|u^{n-1}\|_2^2)+C(\|\theta_u^n\|_0^2+\|\theta_u^{n-1}\|_0^2+\|\theta_p^n\|_0^2+\|\theta_p^{n-1}\|_0^2).
\end{aligned}
\end{equation}
In fact, it is straightforward to verify that
\begin{equation}\label{eq1.19}
\begin{aligned}
H_6+H_9&=-(\partial_t\eta_u^n,\overline{\theta_u^n})-(\partial_t\eta_p^n,\overline{\theta_p^n})\leq\|\partial_t\eta_u^n\|_0\|\overline{\theta_u^n}\|_0+\|\partial_t\eta_p^n\|_0\|\overline{\theta_p^n}\|_0\\
&\leq \frac{1}{\tau}\int_{t_{n-1}}^{t_n}\|\eta_{ut}\|_0ds\|\overline{\theta_u^n}\|_0+\frac{1}{\tau}\int_{t_{n-1}}^{t_n}\|\eta_{pt}\|_0ds\|\overline{\theta_p^n}\|_0\\
&\leq\frac{Ch^2}{\tau}\int_{t_{n-1}}^{t_n}\|u_t\|_2ds\|\overline{\theta_u^n}\|_0+\frac{Ch^2}{\tau}\int_{t_{n-1}}^{t_n}\|p_t\|_2ds\|\overline{\theta_p^n}\|_0\\
&\leq\frac{Ch^4}{\tau}\int_{t_{n-1}}^{t_n}(\|u_t\|_2^2+\|p_t\|_2^2)ds+C(\|\theta_u^n\|_0^2+\|\theta_u^{n-1}\|_0^2+\|\theta_p^n\|_0^2+\|\theta_p^{n-1}\|_0^2).
\end{aligned}
\end{equation}

Applying Taylor expansion with integral remainder, there holds
\begin{equation}\label{eq1.20}
\begin{aligned}
H_3+H_7+H_{14}&=-(\nabla(u_t^{n-\frac{1}{2}}-\partial_tu^n),\nabla\overline{\theta_u^n})-(u_t^{n-\frac{1}{2}}-\partial_tu^n,\overline{\theta_u^n})+(\partial_tp^n-p_t^{n-\frac{1}{2}},\overline{\theta_p^n})\\
&\leq C\tau\int_{t_{n-1}}^{t_n}\|u_{ttt}\|_1ds\|\overline{\theta_u^n}\|_1  +C\tau\int_{t_{n-1}}^{t_n}\|u_{ttt}\|_0ds\|\overline{\theta_u^n}\|_0 +C\tau\int_{t_{n-1}}^{t_n}\|p_{ttt}\|_0ds\|\overline{\theta_p^n}\|_0\\
&\leq C\tau^3\int_{t_{n-1}}^{t_n}(\|u_{ttt}\|_1^2+\|p_{ttt}\|_0^2)ds+C(\|\theta_u^n\|_1^2+\|\theta_u^{n-1}\|_1^2+\|\theta_p^n\|_0^2+\|\theta_p^{n-1}\|_0^2),
\end{aligned}
\end{equation}
and
\begin{equation}\label{eq1.21}
\begin{aligned}
&H_4+H_8+H_{15}+H_{16}\\
=&-(\nabla(\overline{p^n}-p^{n-\frac{1}{2}}),\nabla\overline{\theta_u^n})-(\overline{p^n}-p^{n-\frac{1}{2}},\overline{\theta_u^n})+(\nabla(\overline{u^n}-u^{n-\frac{1}{2}}),\nabla\overline{\theta_p^n})+(\overline{u^n}-u^{n-\frac{1}{2}},\overline{\theta_p^n})\\
\leq &C\tau\int_{t_{n-1}}^{t_n}\|p_{tt}\|_1ds\|\overline{\theta_u^n}\|_1 + C\tau\int_{t_{n-1}}^{t_n}\|p_{tt}\|_0ds\|\overline{\theta_u^n}\|_0 +C\tau\int_{t_{n-1}}^{t_n}\|u_{tt}\|_2ds\|\overline{\theta_p^n}\|_0+C\tau\int_{t_{n-1}}^{t_n}\|u_{tt}\|_0ds\|\overline{\theta_p^n}\|_0\\
\leq & C\tau^3\int_{t_{n-1}}^{t_n}(\|p_{tt}\|_1^2+\|u_{tt}\|_2^2)ds+C(\|\theta_u^n\|_1^2+\|\theta_u^{n-1}\|_1^2+\|\theta_p^n\|_0^2+\|\theta_p^{n-1}\|_0^2).
\end{aligned}
\end{equation}

According to (\ref{eq2.2}) 
\begin{equation}
	H_{1}+H_{2}+H_{10}=0.
\end{equation}
It can be checked that
\begin{equation}\label{eq1.22}
\begin{aligned}
H_{12}&=-(\overline{\varphi^n}\,\overline{u^n}-\overline{\varphi_h^n}\,\overline{u_h^n},\overline{\theta_p^n})
=-(\overline{\varphi^n}\,\overline{u^n}-\overline{\varphi^n}\,\overline{u_h^n}+\overline{\varphi^n}\,\overline{u_h^n}-\overline{\varphi_h^n}\,\overline{u_h^n},\overline{\theta_p^n})
\\&=-(\overline{\varphi^n}(\overline{u^n}-\overline{u_h^n})+(\overline{\varphi^n}-\overline{\varphi_h^n})\overline{u_h^n},\overline{\theta_p^n})\\
&\leq\|\overline{\varphi^n}\|_{0,\infty}\|\overline{\eta_u^n}+\overline{\theta_u^n}\|_0\|\overline{\theta_p^n}\|_0+\|\overline{\eta_\varphi^n}+\overline{\theta_\varphi^n}\|_0\|\overline{u_h^n}\|_{0,\infty}\|\overline{\theta_p^n}\|_0\\
&\leq Ch^2\|\overline{u^n}\|_2\|\overline{\theta_p^n}\|_0+C\|\overline{\theta_u^n}\|_0\|\overline{\theta_p^n}\|_0+Ch^2\|\overline{\varphi^n}\|_2\|\overline{\theta_p^n}\|_0+C\|\overline{\theta_\varphi^n}\|_0\|\overline{\theta_p^n}\|_0.\\
&\leq Ch^4(\|u^n\|_2^2+\|u^{n-1}\|_2^2+\|\varphi^n\|_2^2+\|\varphi^{n-1}\|_2^2)
\\&\quad+C(\|\theta_u^n\|_0^2+\|\theta_u^{n-1}\|_0^2+\|\theta_\varphi^n\|_0^2+\|\theta_\varphi^{n-1}\|_0^2+\|\theta_p^n\|_0^2+\|\theta_p^{n-1}\|_0^2),
\end{aligned}
\end{equation}
and
\begin{equation}\label{eq1.23}
\begin{aligned}
H_{17}&=(\overline{\varphi^n}\,\overline{u^n}-\varphi^{n-\frac{1}{2}}\,u^{n-\frac{1}{2}},\overline{\theta_p^n})=(\overline{\varphi^n}\,\overline{u^n}-\overline{\varphi^{n-\frac{1}{2}}}\,\overline{u^n}+\overline{\varphi^{n-\frac{1}{2}}}\,\overline{u^n}-\overline{\varphi^{n-\frac{1}{2}}}\,\overline{u^{n-\frac{1}{2}}},\overline{\theta_p^n})\\
&=((\overline{\varphi^n}-\overline{\varphi^{n-\frac{1}{2}}})\overline{u^n}+\overline{\varphi^{n-\frac{1}{2}}}(\overline{u^n}-\overline{u^{n-\frac{1}{2}}}),\overline{\theta_p^n})\\
&\leq \|\overline{\varphi^n}-\varphi^{n-\frac{1}{2}}\|_0 \|\overline{u^n}\|_{0,\infty} \|\overline{\theta_p^n}\|_0+\|\varphi^{n-\frac{1}{2}}\|_{0,\infty}\|\overline{u^n}-u^{n-\frac{1}{2}}\|_0\|\overline{\theta_p^n}\|_0\\
&\leq C\tau^3\int_{t_{n-1}}^{t_n}(\|\varphi_{tt}\|_0^2ds+\|u_{tt}\|_0^2)ds+C(\|\theta_p^n\|_0^2+\|\theta_p^{n-1}\|_0^2).
\end{aligned}
\end{equation}

Similar to \cite{shi2025new}, the following estimates are true:
\begin{equation}\label{eq1.24}
\begin{aligned}
H_{13}&=-\frac{1}{2}((|u^n|^2+|u^{n-1}|^2)\overline{u^n}-(|u_h^n|^2+|u_h^{n-1}|^2)\overline{u_h^n},\overline{\theta_p^n})\\
&\leq Ch^4(\|u^n\|_2^2+\|u^{n-1}\|_2^2)+C(\|\theta_p^n\|_0^2+\|\theta_p^{n-1}\|_0^2),
\end{aligned}
\end{equation}
and
\begin{equation}\label{eq1.12}
\begin{aligned}
H_{18}&=(\frac{1}{2}(|u^n|^2+|u^{n-1}|^2)\overline{u^n}-(u^{n-\frac{1}{2}})^3,\overline{\theta_p^n})\\
&\leq C\tau^3\int_{t_{n-1}}^{t_n}(\|u_t\|_1^4+\|u_{tt}\|_0^2)ds+C(\|\theta_p^n\|_0^2+\|\theta_p^{n-1}\|_0^2).
\end{aligned}
\end{equation}

Substituting the above estimates (\ref{eq1.11})--(\ref{eq1.12}) into (\ref{eq1.9}), we get
\begin{equation}\label{eq1.27}
\begin{aligned}
&\frac{1}{2\tau}(\|\theta_p^n\|_0^2-\|\theta_p^{n-1}\|_0^2+\|\theta_u^n\|_1^2-\|\theta_u^{n-1}\|_1^2+\|\theta_u^n\|_0^2-\|\theta_u^{n-1}\|_0^2)\\
\leq&Ch^4(\|p^n\|_2^2+\|p^{n-1}\|_2^2+\|u^n\|_2^2+\|u^{n-1}\|_2^2+\|\varphi^n\|_2^2+\|\varphi^{n-1}\|_2^2)+Ch^4\tau^{-1}\int_{t_n-1}^{t_n}(\|u_t\|_2^2+\|p_t\|_2^2)ds\\
&+C\tau^3\int_{t_{n-1}}^{t_n}(\|u_{ttt}\|_1^2+\|p_{ttt}\|_0^2+\|p_{tt}\|_1^2+\|u_{tt}\|_2^2+\|\varphi_{tt}\|_0^2+\|u_{t}\|_1^4)ds\\
&+C(\|\theta_u^n\|_1^2+\|\theta_u^{n-1}\|_1^2+\|\theta_\varphi^n\|_0^2+\|\theta_\varphi^{n-1}\|_0^2+\|\theta_p^n\|_0^2+\|\theta_p^{n-1}\|_0^2).
\end{aligned}
\end{equation}
Let $w_{1h}=\overline{\theta_\varphi^n}$ and  $w_{2h}=\overline{\theta_\phi^n}$ in (\ref{eq1.8}), we have
\begin{equation}\label{eq1.28}
\begin{aligned}
	&(\nabla\partial_t\theta_\phi^n,\nabla\overline{\theta_\phi^n})+(\partial_t\theta_\varphi^n,\overline{\theta_\varphi^n}) \\
	=&-(\nabla\overline{\eta_\phi^n},\nabla\overline{\theta_\varphi^n})-(\partial_t\eta_\varphi^n,\overline{\theta_\varphi^n})+(\nabla(\overline{\phi^n}-\phi^{n-\frac{1}{2}}),\nabla\overline{\theta_\varphi^n})
	+(\partial_t\varphi^n-\varphi_t^{n-\frac{1}{2}},\overline{\theta_\varphi^n})
	\\&-(\nabla\partial_t\eta_\phi^n,\nabla\overline{\theta_\phi^n})+(\nabla\overline{\eta_\varphi^n}, \nabla\overline{\theta_\phi^n})-\frac{1}{2}(\nabla(|u^n|^2+|u^{n-1}|^2-|u_h^n|^2-|u_h^{n-1}|^2),\nabla\overline{\theta_\phi^n})
	\\&+(\nabla(\partial_t\phi^n-\phi_t^{n-\frac{1}{2}}),\nabla\overline{\theta_\phi^n})
	+(\nabla(\varphi^{n-\frac{1}{2}}-\overline{\varphi^n}), \nabla\overline{\theta_\phi^n})\\
	&+(\nabla(|u^{n-\frac{1}{2}}|^2-\frac{1}{2}(|u^n|^2+|u^{n-1}|^2)),\nabla\overline{\theta_\phi^n})
	\overset{\triangle}{=}\sum_{i=19}^{28}H_i.
\end{aligned}
\end{equation}
Obviously,
\begin{equation}\label{eq1.29}
H_{19}+H_{23}+H_{24}=0.
\end{equation}

Similar to (\ref{eq1.19}), (\ref{eq1.21}), (\ref{eq1.20}),  we have
\begin{equation}
H_{20}=-(\partial_t\eta_\varphi^n,\overline{\theta_\varphi^n})\leq\frac{Ch^4}{\tau}\int_{t_{n-1}}^{t_n}\|\varphi_t\|^2_2ds+C(\|\theta_\varphi^n\|_0^2+\|\theta_\varphi^{n-1}\|_0^2),
\end{equation}
\begin{equation}
\begin{aligned}
	H_{21}+H_{27}&=(\nabla(\overline{\phi^n}-\phi^{n-\frac{1}{2}}),\nabla\overline{\theta_\varphi^n})+(\nabla(\varphi^{n-\frac{1}{2}}-\overline{\varphi^n}), \nabla\overline{\theta_\phi^n})\\
	&\leq C\tau^3\int_{t_{n-1}}^{t_n}(\|\varphi_{tt}\|_1^2+\|\phi_{tt}\|_2^2)ds+C(\|\theta_\phi^n\|_1^2+\|\theta_\phi^{n-1}\|_1^2+\|\theta_\varphi^n\|_0^2+\|\theta_\varphi^{n-1}\|_0^2),
\end{aligned}
\end{equation}
and
\begin{equation}
\begin{aligned}
	H_{22}+H_{26}&=(\partial_t\varphi^n-\varphi_t^{n-\frac{1}{2}},\overline{\theta_\varphi^n})+(\nabla(\partial_t\phi^n-\phi_t^{n-\frac{1}{2}}),\nabla\overline{\theta_\phi^n})\\
	&\leq C\tau^3\int_{t_{n-1}}^{t_n}(\|\varphi_{ttt}\|_0^2+\|\phi_{ttt}\|_1^2)ds+C(\|\theta_\varphi^n\|_0^2+\|\theta_\varphi^{n-1}\|_0^2+\|\theta_\phi^n\|_1^2+\|\theta_\phi^{n-1}\|_1^2).
\end{aligned}
\end{equation}

To provide more intuitive estimates for $H_{25}$ and $H_{28}$, we denote the real and imaginary parts of $u^n$ and $u_h^n$ by $\alpha^n$, $\beta^n$ and $\alpha_h^n$, $\beta_h^n$, respectively. Thus
\begin{equation}
\begin{aligned}
H_{25}=&-\frac{1}{2}(\nabla(|u^n|^2+|u^{n-1}|^2-|u_h^n|^2-|u_h^{n-1}|^2),\nabla\overline{\theta_\phi^n})
\\=&-(\alpha^n\nabla\alpha^n-\alpha_h^n\nabla\alpha_h^n,\nabla\overline{\theta_\phi^n})-(\alpha^{n-1}\nabla\alpha^{n-1}-\alpha_h^{n-1}\nabla\alpha_h^{n-1},\nabla\overline{\theta_\phi^n})\\
&-(\beta^n\nabla\beta^n-\beta_h^n\nabla\beta_h^n,\nabla\overline{\theta_\phi^n})-(\beta^{n-1}\nabla\beta^{n-1}-\beta_h^{n-1}\nabla\beta_h^{n-1},\nabla\overline{\theta_\phi^n})
\overset{\triangle}{=}\sum_{i=1}^{4}A_i.
\end{aligned}
\end{equation}
It follows that
\begin{equation}
\begin{aligned}
A_1=&(\alpha^n\nabla(R_h\alpha^n-\alpha^n),\nabla\overline{\theta_\phi^n})+((\alpha_h^n-\alpha^n)\nabla\alpha^n,\nabla\overline{\theta_\phi^n})+((\alpha_h^n-R_h\alpha^n)\nabla(R_h\alpha^n-\alpha^n),\nabla\overline{\theta_\phi^n})\\
&+((R_h\alpha^n-\alpha^n)\nabla(R_h\alpha^n-\alpha^n),\nabla\overline{\theta_\phi^n})+(\alpha_h^n\nabla(\alpha_h^n-R_h\alpha^n),\nabla\overline{\theta_\phi^n})\overset{\triangle}{=}\sum_{i=1}^{5}B_i.
\end{aligned}
\end{equation}

Let $\alpha^n-\alpha_h^n=\alpha^n-R_h\alpha^n+R_h\alpha^n-\alpha_h^n\overset{\triangle}{=}\eta_\alpha^n+\theta_\alpha^n$, we get
\begin{equation}
\begin{aligned}
B_2+B_5&\leq(\|\eta_\alpha^n+\theta_\alpha^n\|_0\|\nabla\alpha^n\|_{0,\infty}+\|\alpha_h^n\|_{0,\infty}\|\nabla\theta_\alpha^n\|_0)\|\nabla\overline{\theta_\phi^n}\|_0\\
&\leq Ch^2\|\alpha^n\|_2\|\nabla\overline{\theta_\phi^n}\|+C\|\theta_\alpha^n\|_0\|\nabla\overline{\theta_\phi^n}\|_0+C\|\nabla\theta_\alpha^n\|_0\|\nabla\overline{\theta_\phi^n}\|_0\\
&\leq Ch^4\|\alpha^n\|_2^2+C\|\nabla\theta_\alpha^n\|_0^2+C(\|\theta_\phi^n\|_1^2+\|\theta_\phi^{n-1}\|_1^2),
\end{aligned}
\end{equation}
\begin{equation}
B_3\leq\|\theta_\alpha^n\|_{0,4}\|\nabla\eta_\alpha^n\|_{0,4}\|\nabla\overline{\theta_\phi^n}\|_0\leq C\|\theta_\alpha^n\|_0\|\alpha^n\|_3\|\nabla\overline{\theta_\phi^n}\|_0\leq C\|\theta_\alpha^n\|_0^2+C(\|\theta_\phi^n\|_1^2+\|\theta_\phi^{n-1}\|_1^2),
\end{equation}
and
\begin{equation}
B_4\leq\|\eta_\alpha^n\|_{0,4}\|\nabla\eta_\alpha^n\|_{0,4}\|\nabla\overline{\theta_\phi^n}\|_0\leq Ch^2\|\alpha^n\|_3\|\nabla\overline{\theta_\phi^n}\|_0\leq Ch^4\|\alpha^n\|_3^2+C(\|\theta_\phi^n\|_1^2+\|\theta_\phi^{n-1}\|_1^2),
\end{equation}
where the result follows from the Sobolev embedding theorem $H^3(\Omega) \hookrightarrow W^{2,4}(\Omega)$.

Defining the average of a function $\alpha \in W^{1,\infty}(K)$ as $\hat{\alpha} = \frac{1}{|K|} \int_{K} \alpha d\bm{x}$, we have the approximate property  (\cite{chen2013two}),
\[
\|\alpha - \hat{\alpha}\|_{0,\infty,K} \leq C h \|\alpha\|_{1,\infty,K}.
\]
Applying (\ref{eq2.2}), then yields
\begin{equation}\begin{aligned}
B_1=&((\alpha^n-\hat{\alpha}^n)\nabla\eta_\alpha^n,\nabla\overline{\theta_\phi^n})
\leq Ch^2\sum_{K\in T_h}\|\alpha^n\|_{1,\infty,K}\|\alpha^n\|_{2,K}\|\nabla\overline{\theta_\phi^n}\|_{0,K}
\\\leq &Ch^4\|\alpha^n\|_3+C(\|\theta_\phi^n\|_1^2+\|\theta_\phi^{n-1}\|_1^2).
\end{aligned}\end{equation}
Based on the above estimates, we have
\begin{equation}
A_1\leq Ch^4\|u^n\|_3+C\|\theta_u^n\|_1+C(\|\theta_\phi^n\|_1^2+\|\theta_\phi^{n-1}\|_1^2).
\end{equation}
Similarly, the boundedness of $A_2$, $A_3$, $A_4$ can be established. 

Note that
\begin{equation}
H_{25}\leq Ch^4\|u^n\|_3+C\|\theta_u^n\|_1+C(\|\theta_\phi^n\|_1^2+\|\theta_\phi^{n-1}\|_1^2).
\end{equation}
For $H_{28}$, we have the estimate
\begin{eqnarray}\label{eq1.41}
H_{28}&=&(\nabla(|u^{n-\frac{1}{2}}|^2-\frac{1}{2}(|u^n|^2+|u^{n-1}|^2)),\nabla\overline{\theta_\phi^n})\nonumber
\\
&=&2(\alpha^{n-\frac{1}{2}}\nabla(\alpha^{n-\frac{1}{2}}-\overline{\alpha^n}),\nabla\overline{\theta_\phi^n})+2((\alpha^{n-\frac{1}{2}}-\overline{\alpha^n})\nabla\overline{\alpha^n},\nabla\overline{\theta_\phi^n})\nonumber
\\&&+\frac{1}{2}(\alpha^n\nabla(\alpha^{n-1}-\alpha^n),\nabla\overline{\theta_\phi^n})+\frac{1}{2}(\alpha^{n-1}\nabla(\alpha^n-\alpha^{n-1}),\nabla\overline{\theta_\phi^n})\nonumber
\\
&&+2(\beta^{n-\frac{1}{2}}\nabla(\beta^{n-\frac{1}{2}}-\overline{\beta^n}),\nabla\overline{\theta_\phi^n})+2((\beta^{n-\frac{1}{2}}-\overline{\beta^n})\nabla\overline{\beta^n},\nabla\overline{\theta_\phi^n})\nonumber
\\&&+\frac{1}{2}(\beta^n\nabla(\beta^{n-1}-\beta^n),\nabla\overline{\theta_\phi^n})+\frac{1}{2}(\beta^{n-1}\nabla(\beta^n-\beta^{n-1}),\nabla\overline{\theta_\phi^n})\nonumber
\\
&\leq&C\|\alpha^{n-\frac{1}{2}}\|_{0,\infty}\|\alpha^{n-\frac{1}{2}}-\overline{\alpha^n}\|_1\|\overline{\theta_\phi^n}\|_1\nonumber
+C\|\alpha^{n-\frac{1}{2}}-\overline{\alpha^n}\|_0\|\overline{\alpha^n}\|_{1,\infty}\|\overline{\theta_\phi^n}\|_1\nonumber
\\&&+C\|\alpha^n\|_{0,\infty}\|\alpha^{n-1}-\alpha^n\|_1\|\overline{\theta_\phi^n}\|_1
+C\|\alpha^{n-1}\|_{0,\infty}\|\alpha^{n}-\alpha^{n-1}\|_1\|\overline{\theta_\phi^n}\|_1\nonumber
\\&&+C\|\beta^{n-\frac{1}{2}}\|_{0,\infty}\|\beta^{n-\frac{1}{2}}-\overline{\beta^n}\|_1\|\overline{\theta_\phi^n}\|_1+C\|\beta^{n-\frac{1}{2}}-\overline{\beta^n}\|_0\|\overline{\beta^n}\|_{1,\infty}\|\overline{\theta_\phi^n}\|_1\nonumber
\\
&&+C\|\beta^n\|_{0,\infty}\|\beta^{n-1}-\beta^n\|_1\|\overline{\theta_\phi^n}\|_1+C\|\beta^{n-1}\|_{0,\infty}\|\beta^{n}-\beta^{n-1}\|_1\|\overline{\theta_\phi^n}\|_1\nonumber\\
&\leq&C\tau\int_{t_{n-1}}^{t_{n}}\|\alpha_{tt}\|_1ds\|\overline{\theta_\phi^n}\|_1+C\tau\int_{t_{n-1}}^{t_{n}}\|\alpha_{tt}\|_0ds\|\overline{\theta_\phi^n}\|_1+C\tau\int_{t_{n-1}}^{t_{n}}\|\alpha_{t}\|_1ds\|\overline{\theta_\phi^n}\|_1\nonumber
\\&&+C\tau\int_{t_{n-1}}^{t_{n}}\|\alpha_{t}\|_1ds\|\overline{\theta_\phi^n}\|_1
+C\tau\int_{t_{n-1}}^{t_{n}}\|\beta_{tt}\|_1ds\|\overline{\theta_\phi^n}\|_1+C\tau\int_{t_{n-1}}^{t_{n}}\|\beta_{tt}\|_0ds\|\overline{\theta_\phi^n}\|_1\nonumber
\\&&+C\tau\int_{t_{n-1}}^{t_{n}}\|\beta_{t}\|_1ds\|\overline{\theta_\phi^n}\|_1+C\tau\int_{t_{n-1}}^{t_{n}}\|\beta_{t}\|_1ds\|\overline{\theta_\phi^n}\|_1\nonumber
\\
&\leq&C\tau^3\int_{t_{n-1}}^{t_n}(\|\alpha_{tt}\|_1^2+\|\alpha_{t}\|_1^2+\|\beta_{tt}\|_1^2+\|\beta_{t}\|_1^2)ds+C(\|\theta_\phi^n\|_1^2+\|\theta_\phi^{n-1}\|_1^2)\nonumber
\\
&\leq&C\tau^3\int_{t_{n-1}}^{t_n}(\|u_{tt}\|_1^2+\|u_{t}\|_1^2)ds+C(\|\theta_\phi^n\|_1^2+\|\theta_\phi^{n-1}\|_1^2).
\end{eqnarray}
Substituting these results (\ref{eq1.29})--(\ref{eq1.41}) into (\ref{eq1.28}), we get
\begin{equation}\label{eq1.42}
\begin{aligned}
&\frac{1}{2\tau}(\|\theta_\phi^n\|_1^2-\|\theta_\phi^{n-1}\|_1^2+\|\theta_\varphi^n\|_0^2-\|\theta_\varphi^{n-1}\|_0^2)\\
\leq&Ch^4\tau^{-1}\int_{t_{n-1}}^{t_n}\|\varphi_t\|^2_2ds+C\tau^3\int_{t_{n-1}}^{t_n}(\|\varphi_{tt}\|_1^2+\|\phi_{tt}\|_2^2+\|\varphi_{ttt}\|_0^2+\|\phi_{ttt}\|_1^2+\|u_{tt}\|_1^2+\|u_{t}\|_1^2)ds\\
&+Ch^4\|u^n\|_3+C(\|\theta_u^n\|_1+\|\theta_\phi^n\|_1^2+\|\theta_\phi^{n-1}\|_1^2+\|\theta_\varphi^n\|_0^2+\|\theta_\varphi^{n-1}\|_0^2).
\end{aligned}
\end{equation}
Then, combining (\ref{eq1.27}) and (\ref{eq1.42}), we have
\begin{equation}\label{eq1.43}
\begin{aligned}
&\frac{1}{2\tau}(\|\theta_p^n\|_0^2-\|\theta_p^{n-1}\|_0^2+\|\theta_u^n\|_1^2-\|\theta_u^{n-1}\|_1^2+\|\theta_u^n\|_0^2-\|\theta_u^{n-1}\|_0^2+\|\theta_\phi^n\|_1^2-\|\theta_\phi^{n-1}\|_1^2+\|\theta_\varphi^n\|_0^2-\|\theta_\varphi^{n-1}\|_0^2)\\
\leq&Ch^4(\|p^n\|_2^2+\|p^{n-1}\|_2^2+\|u^n\|_3^2+\|u^{n-1}\|_2^2+\|\varphi^n\|_2^2+\|\varphi^{n-1}\|_2^2)+Ch^4\tau^{-1}\int_{t_n-1}^{t_n}(\|u_t\|_2^2+\|p_t\|_2^2+\|\varphi_t\|^2_2)ds\\
&+C\tau^3\int_{t_{n-1}}^{t_n}(\|u_{ttt}\|_1^2+\|p_{ttt}\|_0^2+\|p_{tt}\|_1^2+\|u_{tt}\|_2^2+\|\varphi_{tt}\|_1^2+\|u_{t}\|_1^4+\|\phi_{tt}\|_2^2+\|\varphi_{ttt}\|_0^2+\|\phi_{ttt}\|_1^2+\|u_{t}\|_1^2)ds\\
&+C(\|\theta_u^n\|_1^2+\|\theta_u^{n-1}\|_1^2+\|\theta_\varphi^n\|_0^2+\|\theta_\varphi^{n-1}\|_0^2+\|\theta_p^n\|_0^2+\|\theta_p^{n-1}\|_0^2+\|\theta_\phi^n\|_1^2+\|\theta_\phi^{n-1}\|_1^2).
\end{aligned}
\end{equation}
Summing (\ref{eq1.43}) for $n$ from 1 to $m$ (where $1\leq m\leq N$) leads to
\begin{equation}\label{eq1.44}
\begin{aligned}
&(1-C\tau)(\|\theta_p^m\|_0^2+\|\theta_u^m\|_1^2+\|\theta_\phi^m\|_1^2+\|\theta_\varphi^m\|_0^2)
\\\leq& \|\theta_p^0\|_0^2+\|\theta_u^0\|_1^2+\|\theta_u^0\|_0^2+\|\theta_\phi^0\|_1^2+\|\theta_\varphi^0\|_0^2+Ch^4+C\tau^4\\
&+C\tau\sum_{n=0}^{m-1}(\|\theta_p^n\|_0^2+\|\theta_u^n\|_1^2+\|\theta_\phi^n\|_1^2+\|\theta_\varphi^n\|_0^2).
\end{aligned}
\end{equation}
Noting that $\theta_p^0=\theta_u^0=\theta_\varphi^0=\theta_\phi^0=0$, when $1-C\tau>0$, we apply the discrete Gronwall lemma to get 
\begin{equation}\label{eq3.34}
\|R_hp^n-p_h^n\|_0+\|R_hu^n-u_h^n\|_1+\|R_h\phi^n-\phi_h^n\|_1+\|R_h\varphi^n-\varphi_h^n\|_0\leq C(h^2+\tau^2).
\end{equation}
Finally, using the triangle inequality and (\ref{eq2.5}), we obtain the desire result (\ref{1.15}).

Now we are ready to prove  Assumption \ref{assumption3.1} by mathematical induction. When $m=0$, we get $\|u^0-u_h^0\|_{0,\infty}=\|u^0-R_hu^0\|_{0,\infty}\leq Ch^{2-\frac{d}{2}}<1$ for a small $h$. Assume that $\|u^m-u_h^m\|_{0,\infty}<1$ is true for $m \leq n-1$, we have from
(\ref{eq3.34}) that
\begin{equation}
\|R_hu^{n-1}-u_h^{n-1}\|_{0,\infty}\leq Ch^{-\frac{d}{2}}(h^2+\tau^2).
\end{equation}
For $m = n$, since $\|u-u_h\|_{0,\infty}$ is continuous with respect to $t$, for a sufficiently small $\epsilon$, there exist  $h\leq h_0$, such
that when $|t_n-t_{n-1}|<\epsilon$, there holds
\begin{equation}
\big|\,\|u^{n}-u_h^{n}\|_{0,\infty}-\|u^{n-1}-u_h^{n-1}\|_{0,\infty}\,\big|<h^{2-\frac{d}{2}},
\end{equation}
which implies for $\tau=O(h)$,
\begin{equation}\begin{aligned}
\|u^{n}-u_h^{n}\|_{0,\infty}<&\|R_hu^{n-1}-u_h^{n-1}\|_{0,\infty}+\|R_hu^{n-1}-u^{n-1}\|_{0,\infty}+h^{2-\frac{d}{2}}
\\ \leq& Ch^{-\frac{d}{2}}(h^2+\tau^2)+Ch^{2-\frac{d}{2}}\leq C^*h^{2-\frac{d}{2}}<1.
\end{aligned}\end{equation}
\end{proof}

\begin{remark}\label{rem2}
Applying the interpolated postprocessing operator $I_{2h}$ introduced in \cite{lin1996construction}, we obtain the global superconvergence result:
\begin{equation}
\|I_{2h}u_h^n-u^n\|_1+\|I_{2h}\phi_h^n-\phi^n\|_1\leq C(h^2+\tau^2).
\end{equation}
Combining the triangle inequality with Theorem \ref{theorem3.1} yields the optimal error estimates
\begin{equation}
\|p_h^n-p^n\|_0+\|\varphi_h^n-\varphi^n\|_0\leq C(h^2+\tau^2).
\end{equation}
\end{remark}
	

\begin{remark}
The proposed combined technique offers two key advantages. First, it effectively relaxes the regularity requirements imposed on the exact solutions: specifically, our theoretical analysis only assumes $u_t \in L^2(J; \mathcal{H}^2(\Omega))$, $p \in L^\infty(J; \mathcal{H}^2(\Omega))$, and $\phi \in L^\infty(J; H^2(\Omega))$. In contrast, a conventional error analysis based solely on the standard interpolation operator would require significantly stronger smoothness, namely, $u_t \in L^2(J; \mathcal{H}^3(\Omega))$, $p \in L^\infty(J; \mathcal{H}^3(\Omega))$, and $\phi \in L^\infty(J; H^3(\Omega))$, to establish the convergence results stated in Theorem \ref{theorem3.1}. Second, this hybrid approach facilitates the derivation of superconvergence properties for $u$ and $\phi$. By contrast, relying exclusively on the Ritz projection operator $R_h$ renders the construction of a postprocessing interpolation operator $I_{2h}$ satisfying the critical identity $I_{2h} R_h \mu = I_{2h} \mu$ (for $\mu = u, \phi$) infeasible under current analytical frameworks.
\end{remark}

\section{Numerical results}
In this section, some numerical results are provided to validate our theoretical analysis. Specifically, the simulations serve triple complementary purposes: (i) to verify the convergence rates predicted by the error estimates,  (ii) to confirm the conservation of total energy by the proposed method, and (iii) to simulate the wave propagation and interaction behaviors.

\subsection{Two-dimensional case}
\begin{example}\label{example4.1.1}
(Accurancy test) The analytical solutions are given by
\begin{equation}
\renewcommand{\arraystretch}{1.5}
\left\{\begin{array}{llllll}
u=e^{-2t}\text{sin}(3\pi x)\text{sin}(3\pi y)+ie^{-3t}\text{sin}(2\pi x)\text{sin}(2\pi y),\nonumber\\
\varphi=18\pi^2e^{-t}\text{sin}(3\pi x)\text{sin}(3\pi y).\nonumber
\end{array}
\right.
\end{equation}
\end{example}


Let $\Omega = (0,1) \times (0,1)$ and $T = 1.0$. The spatial domain $\Omega$ is discretized using a uniform rectangular mesh with $M$ subdivisions per direction, so that the spatial mesh size satisfies $h = 1/M$. To rigorously verify the convergence orders predicted by the theoretical analysis, the time step is selected as $\tau = h$, such that the temporal resolution is well matched with the spatial discretization scale. At the final time $T = 1$, the following errors are computed:  
$\|I_h u^n - u_h^n\|_1$, $\|I_{2h} u_h^n - u^n\|_1$, $\|I_h \phi^n - \phi_h^n\|_1$, $\|I_{2h} \phi_h^n - \phi^n\|_1$, $\|p_h^n - p^n\|_0$, and $\|\varphi_h^n - \varphi^n\|_0$  
and tabulated in Table \ref{table4.1}. 
\begin{table}[!htbp]
	\caption{ Errors and rates at $t = 1.0$ for 2D case.}\label{table4.1}
	\centering
	\begin{tabular*}{\textwidth}{@{\extracolsep{\fill}}l|lllll }
		\toprule
		$M \times M$ & $8 \times 8$ & $16 \times 16$ & $32 \times 32$ & $64 \times 64$ & $128 \times 128$ \\
		\midrule
		$\|I_hu^n - u_h^n\|_1$       & 1.1844e+00 & 4.3908e-01 & 1.2131e-01 & 3.1100e-02 & 7.8242e-03 \\
		order                        & -          & 1.4316     & 1.8557     & 1.9637     & 1.9909       \\ \hline
		$\|I_{2h} u_h^n - u^n\|_1$   & 1.2551e+00 & 4.5442e-01 & 1.2425e-01 & 3.1761e-02 & 7.9846e-03 \\
		order                        & -          & 1.4657     & 1.8708     & 1.9679     & 1.9920       \\\hline
		$\|I_h\phi^n - \phi_h^n\|_1$ & 8.6058e-01 & 1.9977e-01 & 4.8354e-02 & 1.1978e-02 & 2.9874e-03 \\
		order                        & -         & 2.1070     & 2.0466     & 2.0133     & 2.0034       \\\hline
		$\|I_{2h}\phi_h^n-\phi^n\|_1$& 8.9303e-01 & 2.0182e-01 & 4.8588e-02 & 1.2019e-02 & 2.9967e-03 \\
		order                        & -        & 2.1457     & 2.0544     & 2.0152     & 2.0039       \\\hline
		$\|p_h^n-p^n\|_0$            & 1.5240e+00 & 4.1258e-01 & 1.0452e-01 & 2.6198e-02 & 6.5534e-03 \\
		order                        & -          & 1.8852     & 1.9809     & 1.9962     & 1.9991       \\\hline
		$\|\varphi_h^n-\varphi^n\|_0$& 1.4777e+00 & 2.1950e-01 & 4.3216e-02 & 1.0029e-02 & 2.4582e-03 \\
		order                        & -         & 2.7510     & 2.3446     & 2.1073     & 2.0286       \\
		\bottomrule
	\end{tabular*}
\end{table}

As shown therein, $\|I_h u^n - u_h^n\|_1$ and $\|I_h \phi^n - \phi_h^n\|_1$ exhibit second-order convergence in $h$, precisely matching the optimal rate established in Theorem \ref{theorem3.1}. Moreover, $\|I_{2h} u_h^n - u^n\|_1$, $\|I_{2h} \phi_h^n - \phi^n\|_1$, $\|p_h^n - p^n\|_0$, and $\|\varphi_h^n - \varphi^n\|_0$ all achieve $O(h^2)$ convergence-fully corroborating the supercloseness and error estimates derived in Remark \ref{rem2}. For visual confirmation, the corresponding convergence curves are plotted in Fig. \ref{fig4.1}.

\begin{figure}[!htbp]
	\centering
	\subfigure[$u_h^n$, $\phi_h^n$.]{\includegraphics[width=0.35\linewidth]{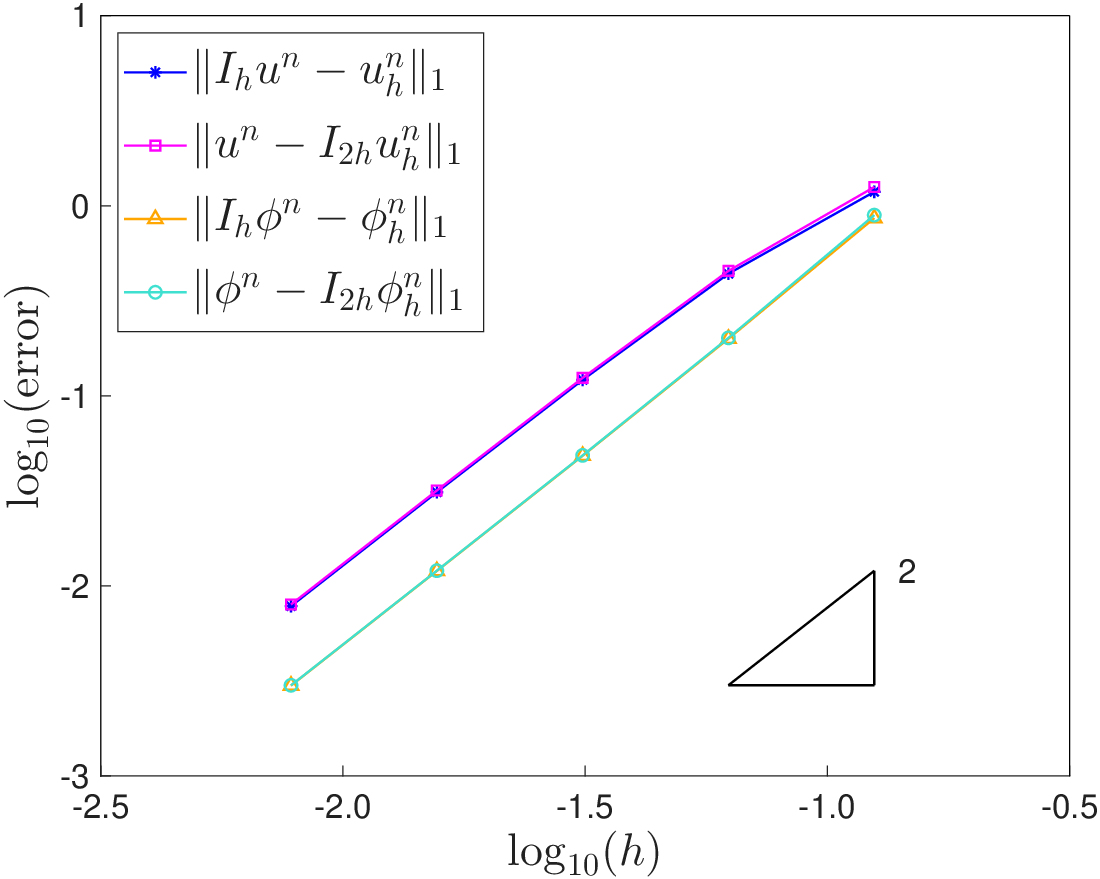}
		}\hspace{1cm}
\subfigure[$p_h^n$, $\varphi_h^n$.]{\includegraphics[width=0.35\linewidth]{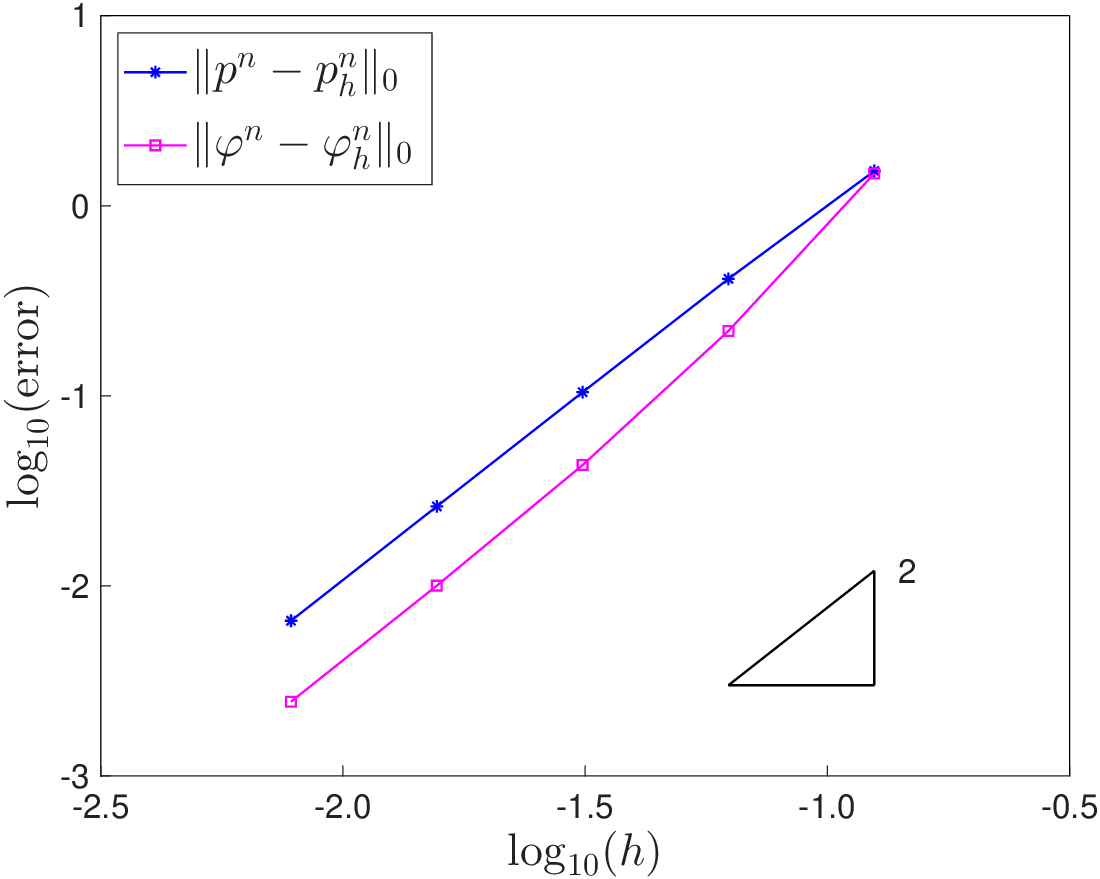}}
\caption{\ Error curves of $u_h^n$, $\phi_h^n$, $p_h^n$, and $\varphi_h^n$ for 2D case.}
	\label{fig4.1}
\end{figure}

\begin{example}\label{example4.1.2}
(Energy conservation $E^n=E^0$) The initial conditions are given by
\begin{equation}
	\renewcommand{\arraystretch}{1.5}
	\left\{\begin{array}{llllll}
		u_0=\text{sin}(3\pi x)\text{sin}(3\pi y)+i\text{sin}(2\pi x)\text{sin}(2\pi y),\\
		\varphi_0=18\pi^2\text{sin}(3\pi x)\text{sin}(3\pi y).\nonumber
	\end{array}
	\right.
\end{equation}
\end{example}
Let $\Omega = (0,1) \times (0,1)$ and $T = 5$. Figure \ref{fig4.2}(a) displays the temporal
evolution of the discrete energy $E^n$, confirming that the proposed scheme exactly preserves the total energy, i.e., $E^n = E^0$ for all $n \geq 0$. To quantify numerical drift more sensitively, Figure \ref{fig4.2}(b) plots the deviation $|E^n - E^0|$ over time; the results show that this quantity almost remains bounded within machine precision, providing strong numerical evidence of exact discrete energy conservation.

\begin{figure}[htbp]
	\centering
	\subfigure[Energy conservation.]{\includegraphics[width=0.35\linewidth]{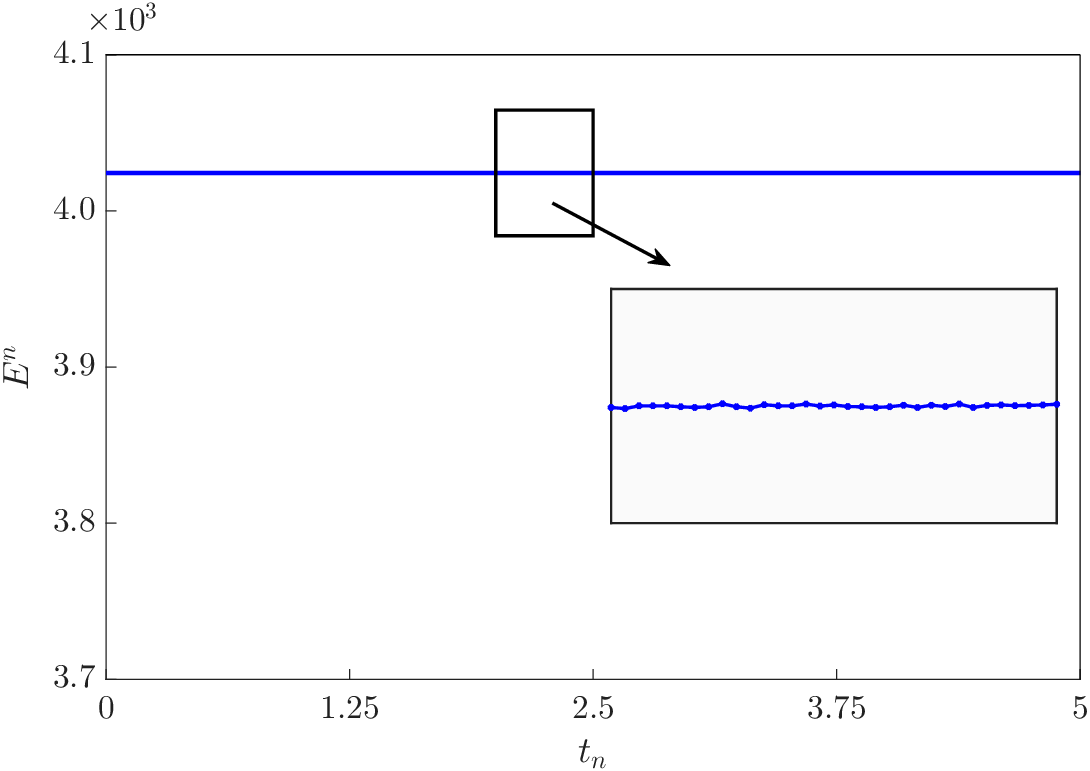}
		}\hspace{1cm}
	\subfigure[Evolution of energy $|E^n-E^0|$.]{\includegraphics[width=0.35\linewidth]{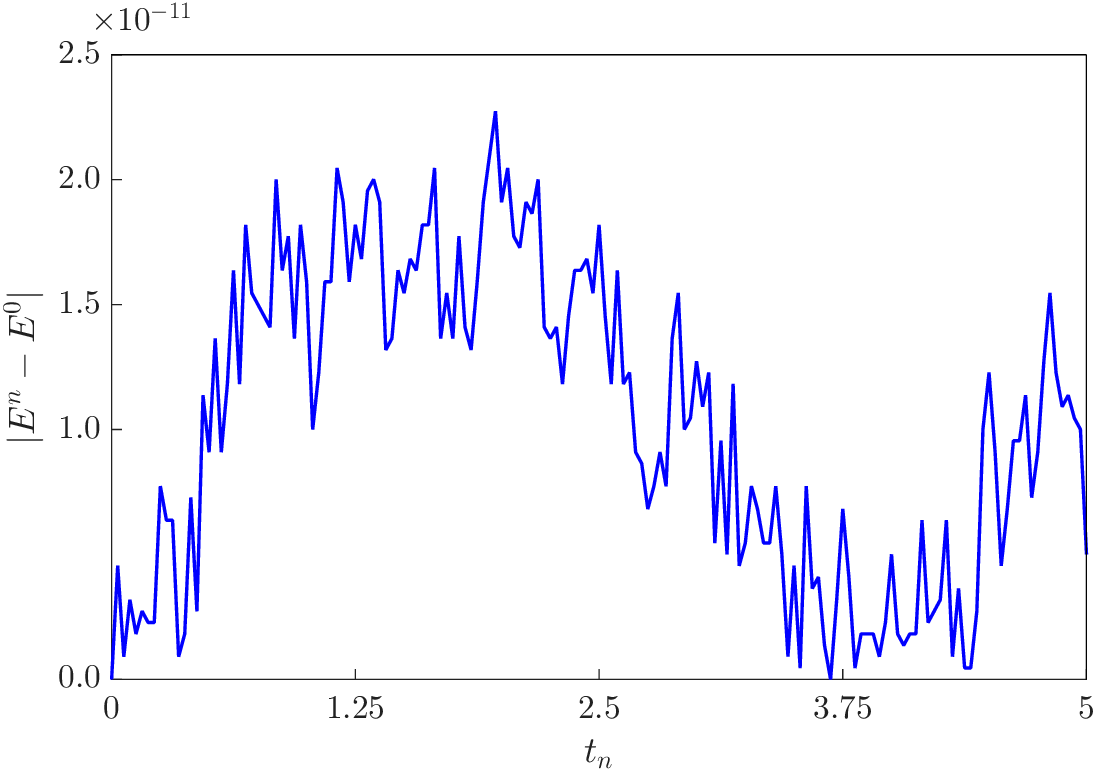}
		}
	\caption{\ Energy curves.}
	\label{fig4.2}
\end{figure}

\begin{example}\label{example4.1.3}
	(Dynamics of wave interactions) The initial conditions are given by (see \cite{bao2016uniformly})
	\begin{equation}
		\renewcommand{\arraystretch}{1.5}
		\left\{\begin{array}{llllll}
			u_0=(1+\frac{i}{2})[\text{exp}(-(x+2)^2-y^2)+\text{exp}(-(x-2)^2-y^2)],\\
			\varphi_0=\text{sech}(x^2+(y+2)^2)+\text{sech}(x^2+(y-2)^2),\nonumber\\ 
			u_1=(1+\frac{i}{2})\text{exp}(-x^2-y^2),\\
			\varphi_1=\text{sech}(x^2+y^2).
		\end{array}
		\right.
	\end{equation}
\end{example}
 
Let $\Omega=(-10,10)\times(-10,10)$, $T=5$, $h=1/8$, $\tau=0.01$. The contour plots illustrating the evolution of $|u|$ and $\varphi$ at distinct time levels are shown in Fig. \ref{fig4.4-1} and Fig. \ref{fig4.4-2}. These visualizations clearly capture the characteristic wave propagation and interaction phenomena governed by the two-dimensional KGZ system (\ref{eq1.1}), demonstrating that the proposed method accurately and efficiently resolves the underlying physical dynamics.
\begin{figure}[!htbp]
	\centering{
	\subfigure[$t=0$]
	{\includegraphics[width=0.25\linewidth]{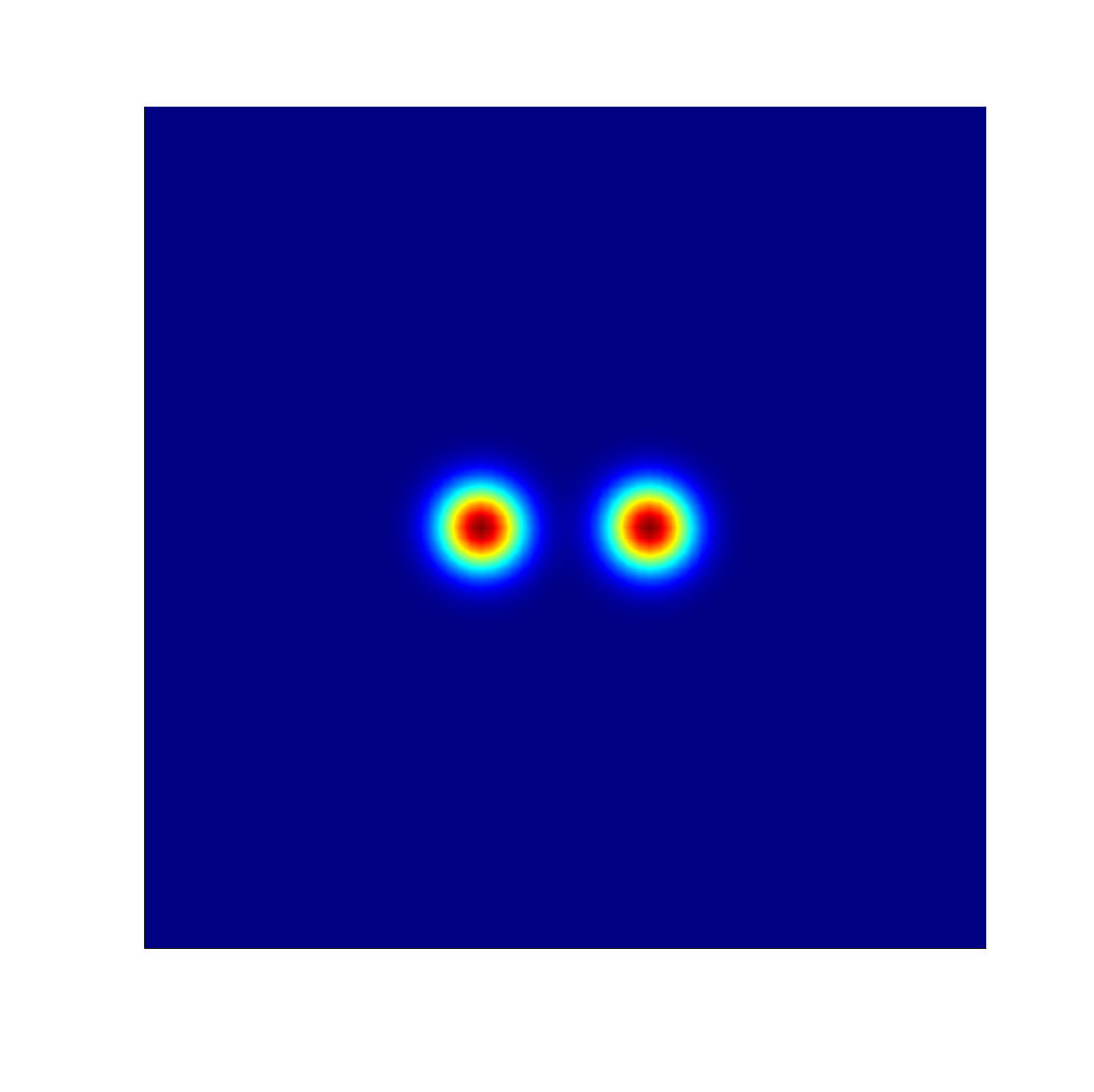}
	}
	\subfigure[$t=1$]
	{
		\includegraphics[width=0.25\linewidth]{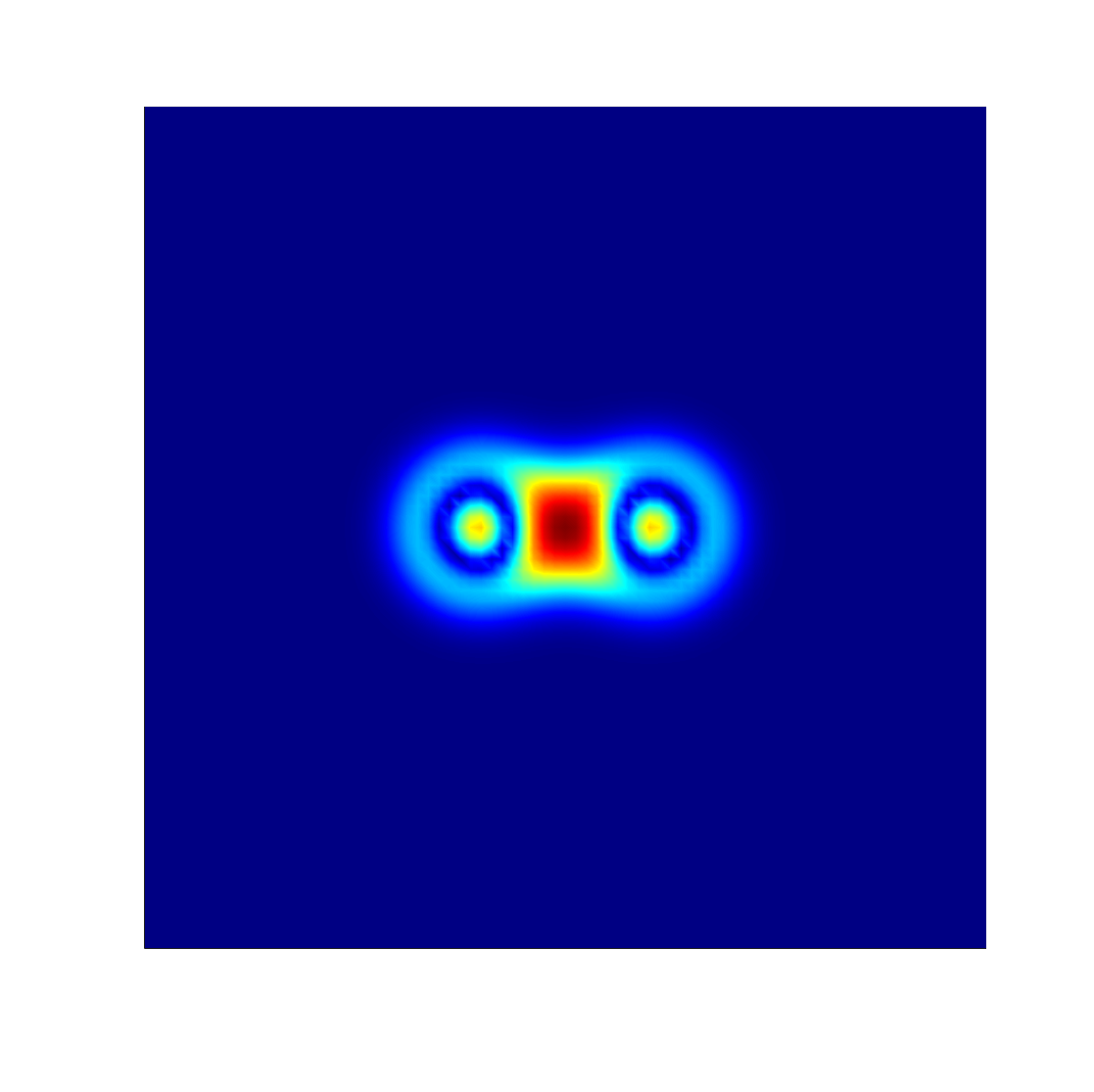}
		}
	\subfigure[$t=2$]
	{\includegraphics[width=0.25\linewidth]{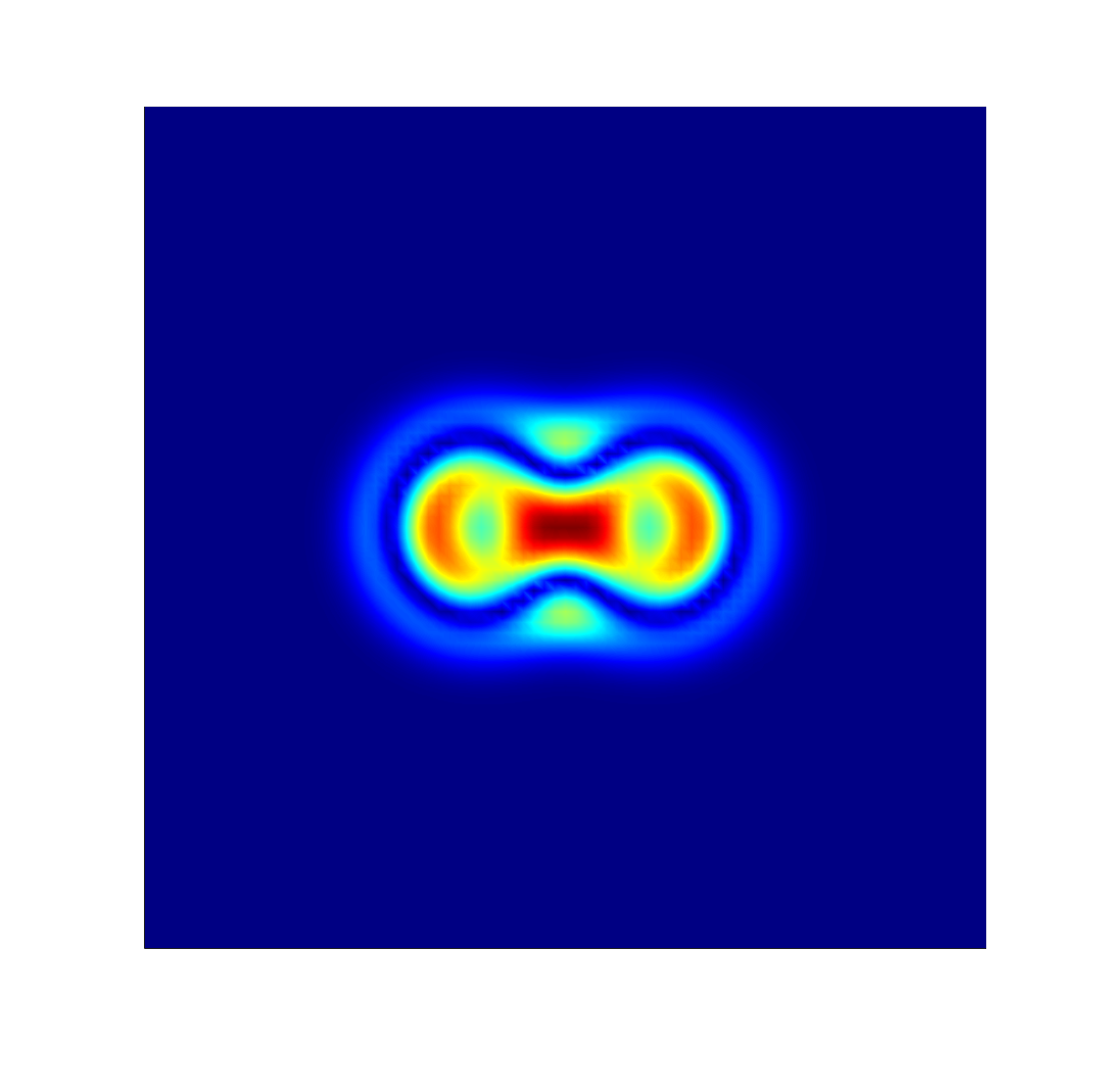}
		}
	\subfigure[$t=3$]
	{
		\includegraphics[width=0.25\linewidth]{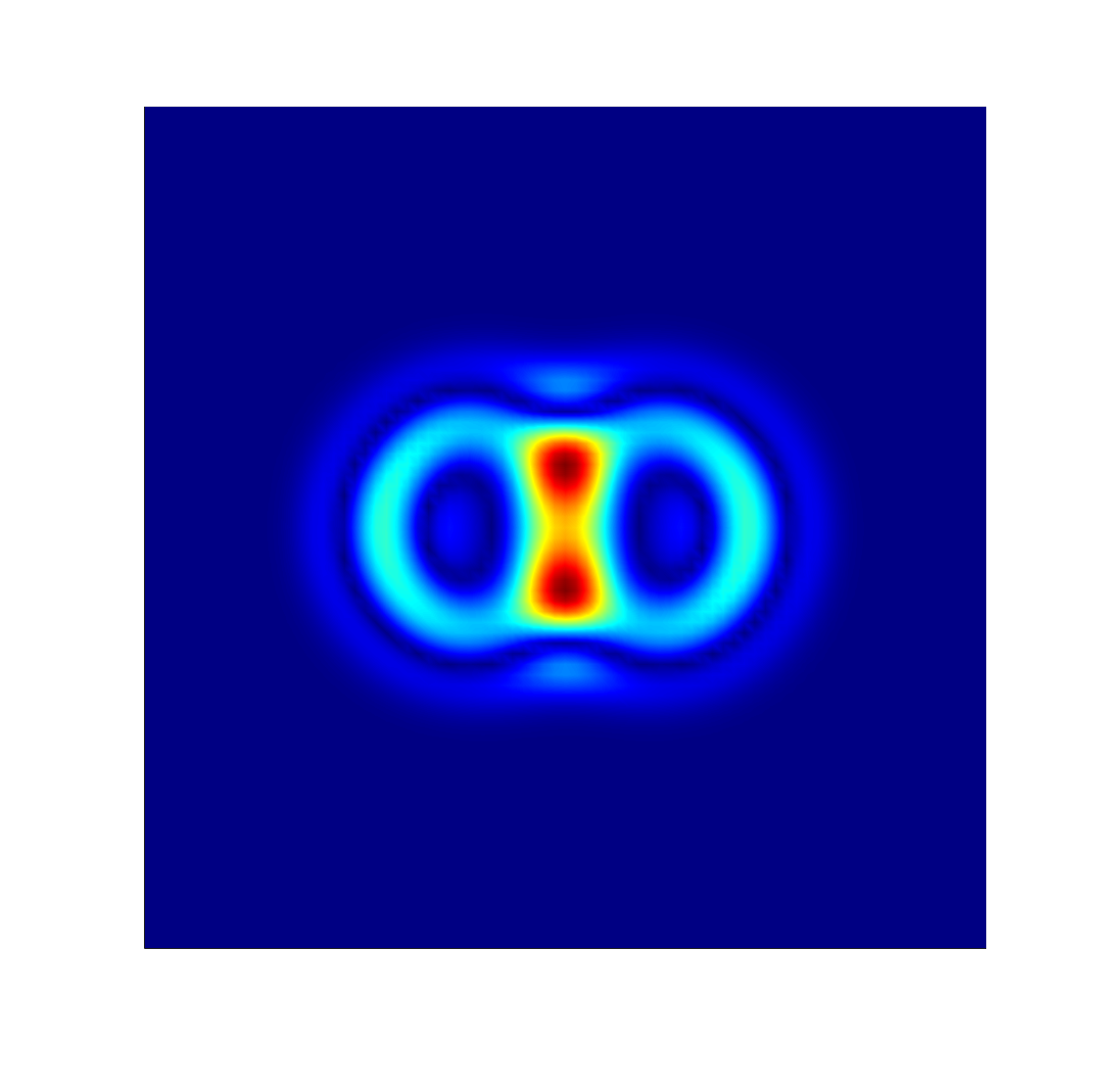}
		}
	\subfigure[$t=4$]
	{
		\includegraphics[width=0.25\linewidth]{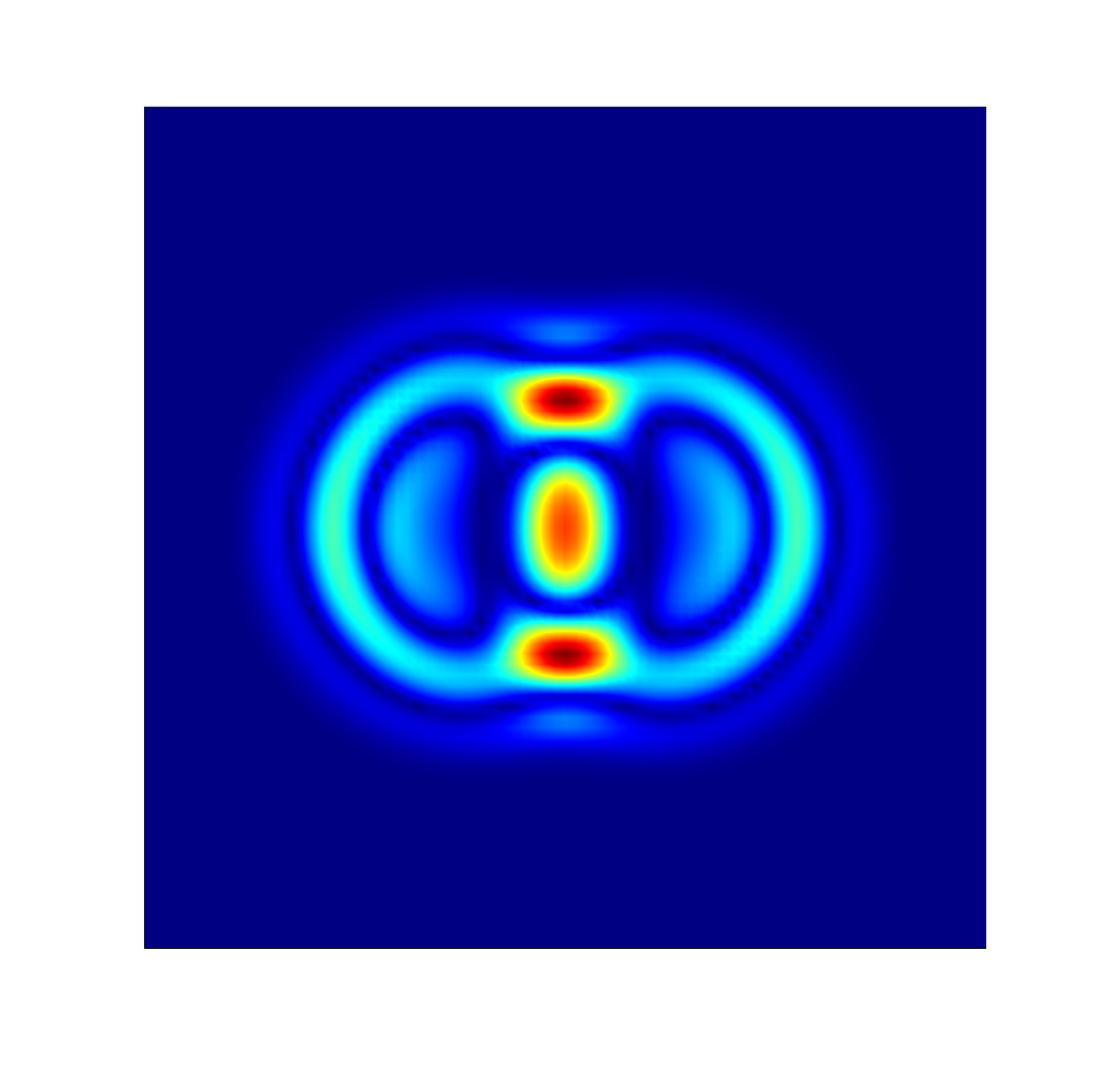}
		}
	\subfigure[$t=5$]
	{
		\includegraphics[width=0.25\linewidth]{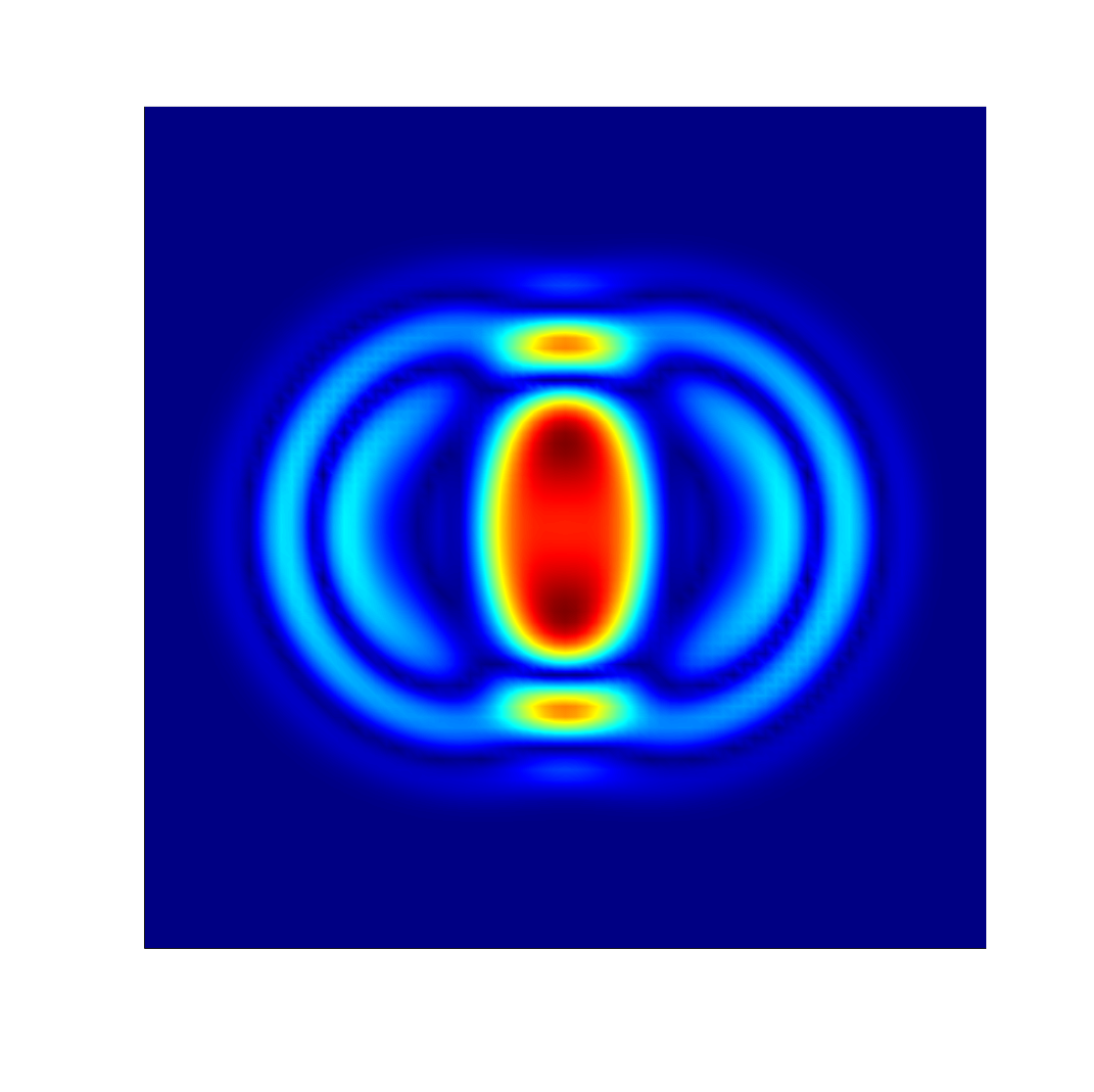}
		}}
	\caption{\ Temporal evolution of the approximate solution $|u|$ depicted via contour lines.}
	\label{fig4.4-1}
\end{figure}

\begin{figure}[!htbp]
	\centering
	{\subfigure[$t=0$]
	{
		\includegraphics[width=0.25\linewidth]{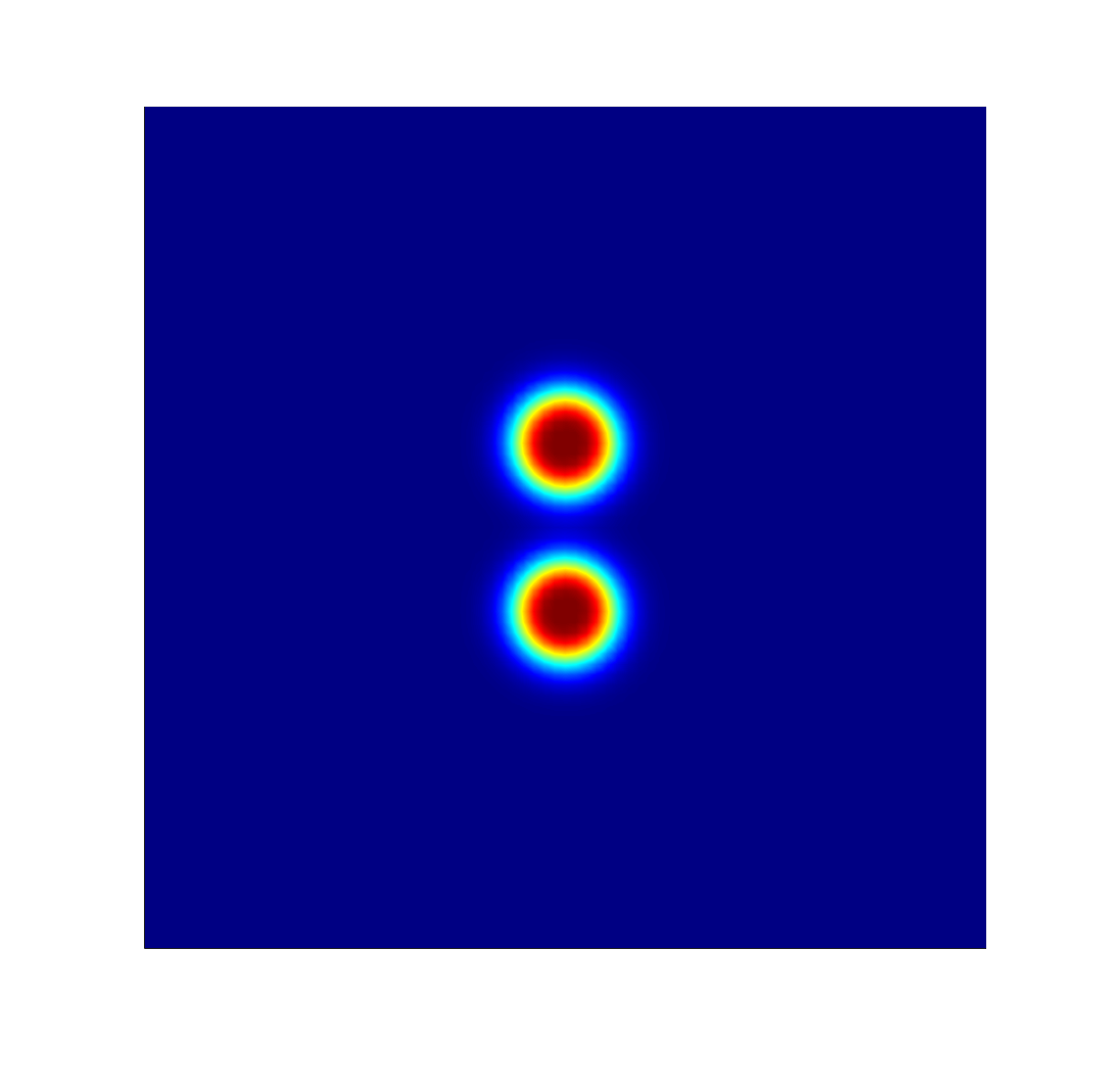}
		}
	\subfigure[$t=1$]
	{		\includegraphics[width=0.25\linewidth]{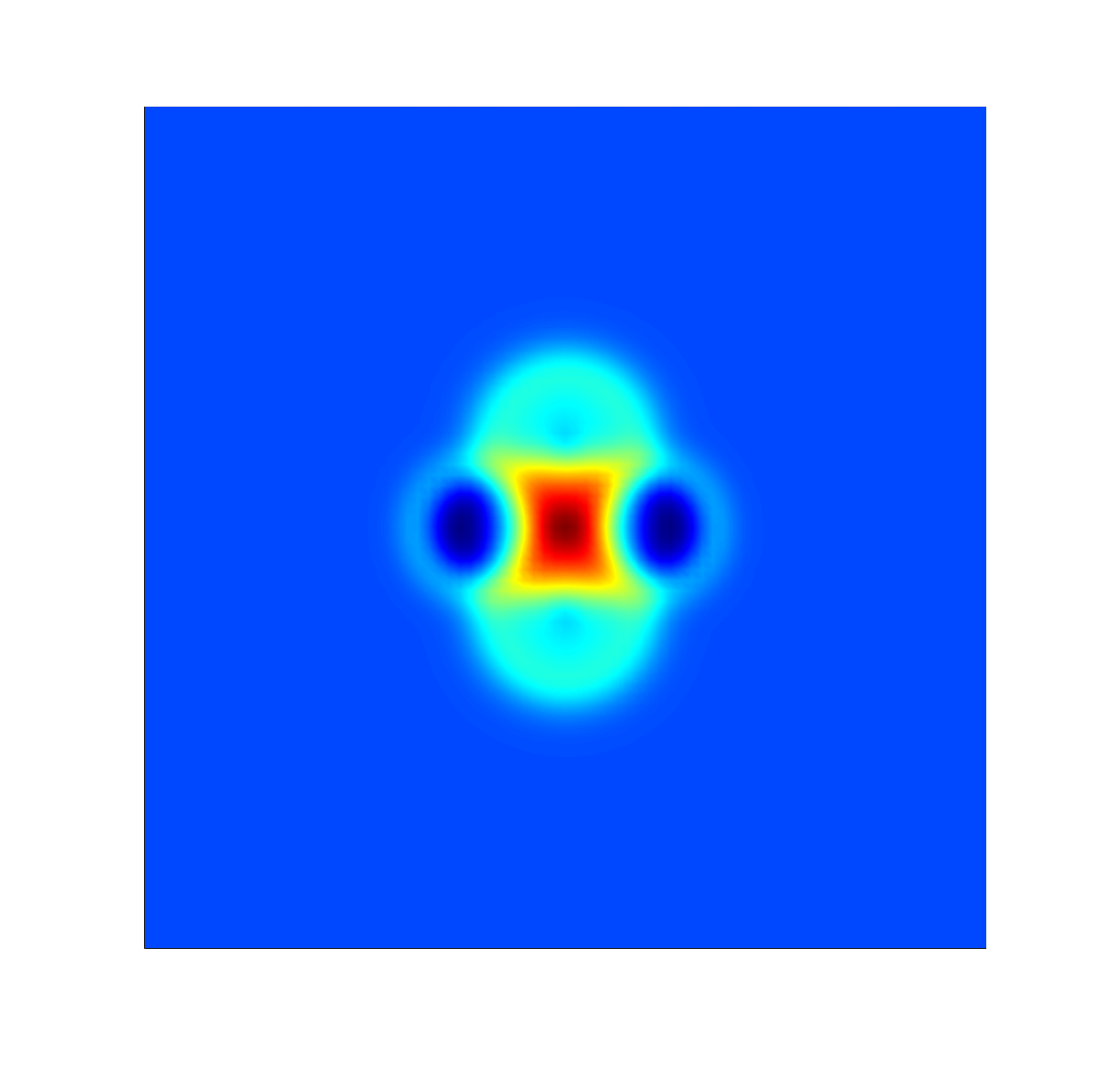}
		}
	\subfigure[$t=2$]
	{
		\includegraphics[width=0.25\linewidth]{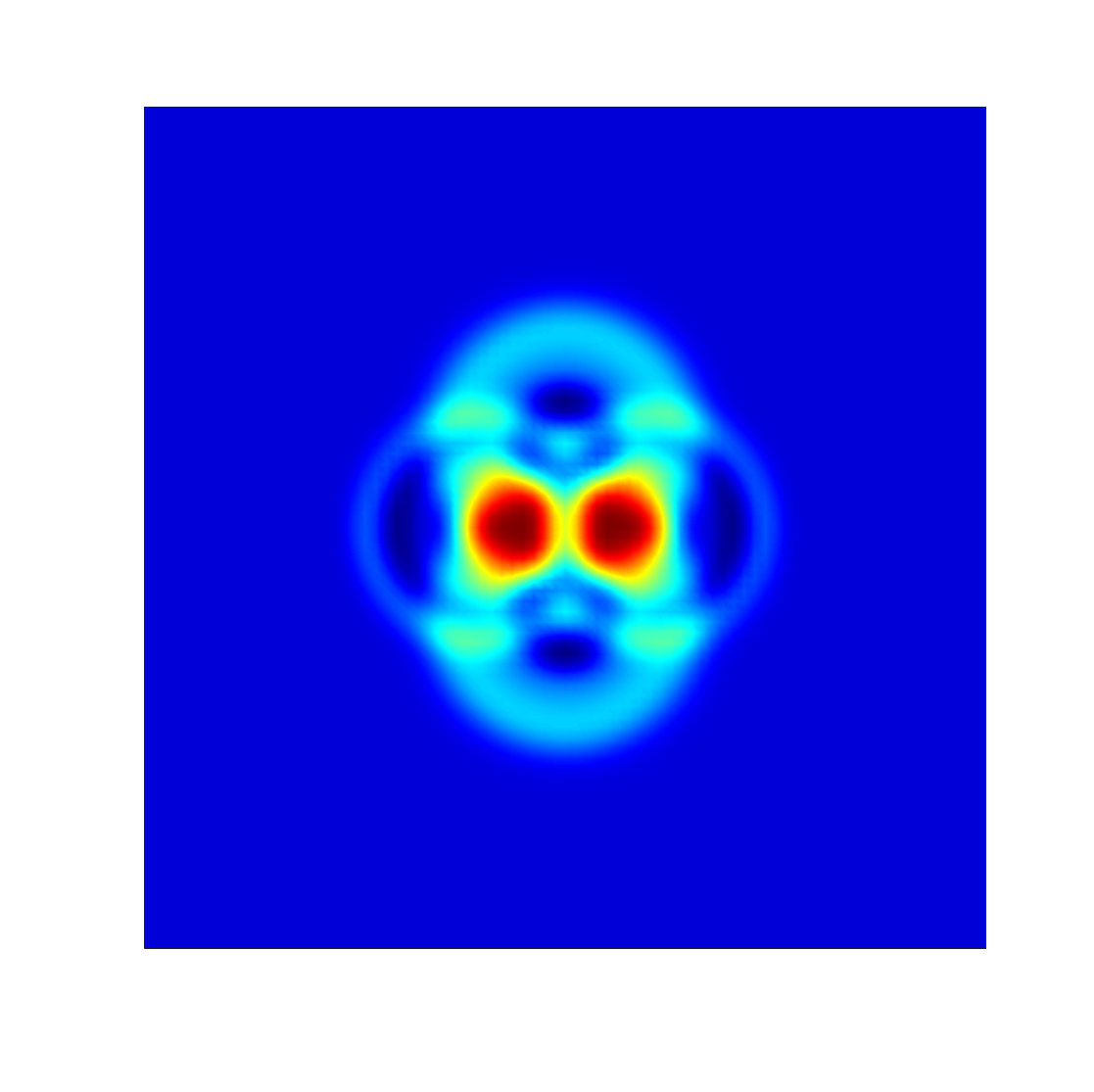}
		}
	\subfigure[$t=3$]
	{
		\includegraphics[width=0.25\linewidth]{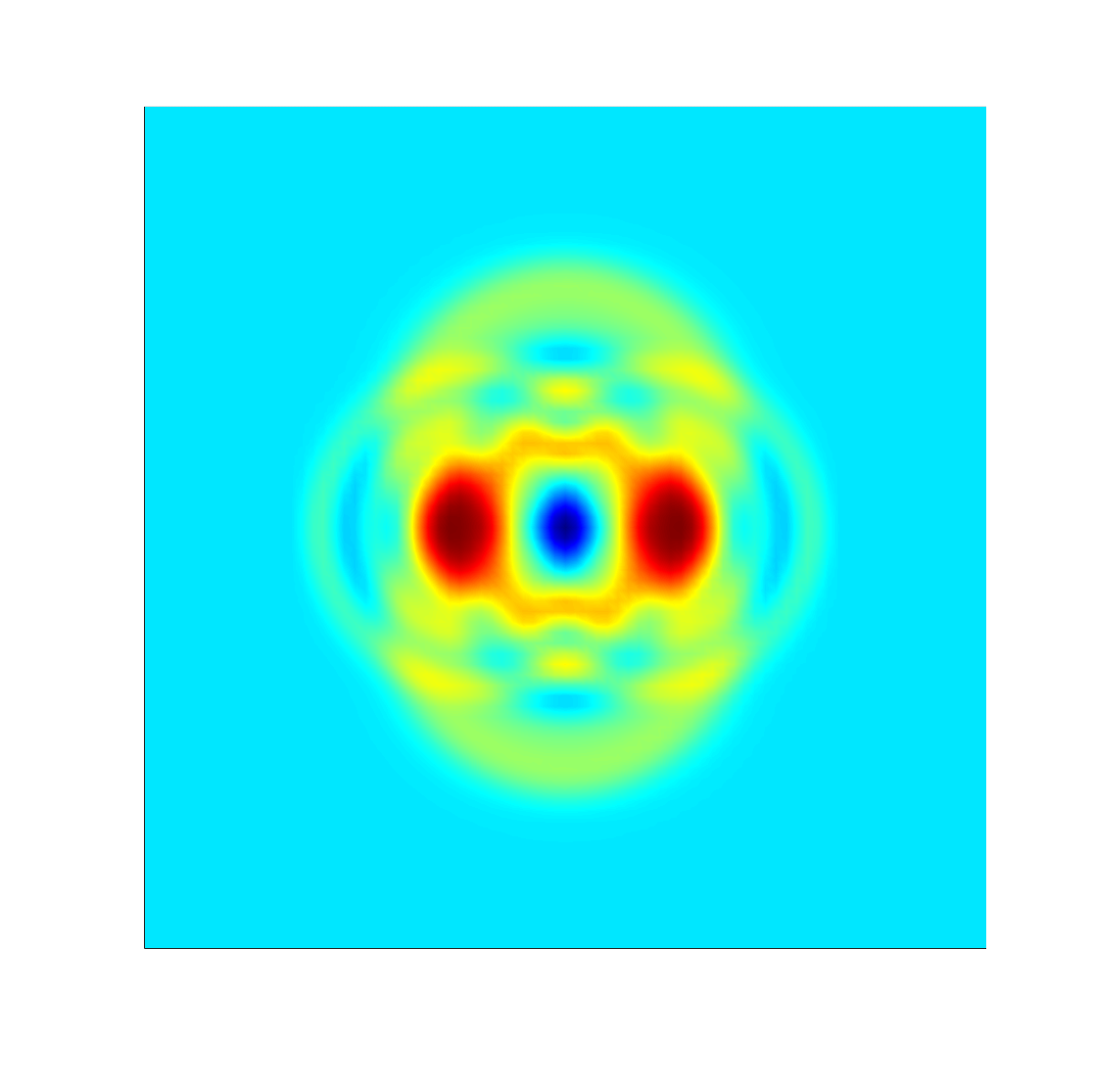}
		}
	\subfigure[$t=4$]
	{
		\includegraphics[width=0.25\linewidth]{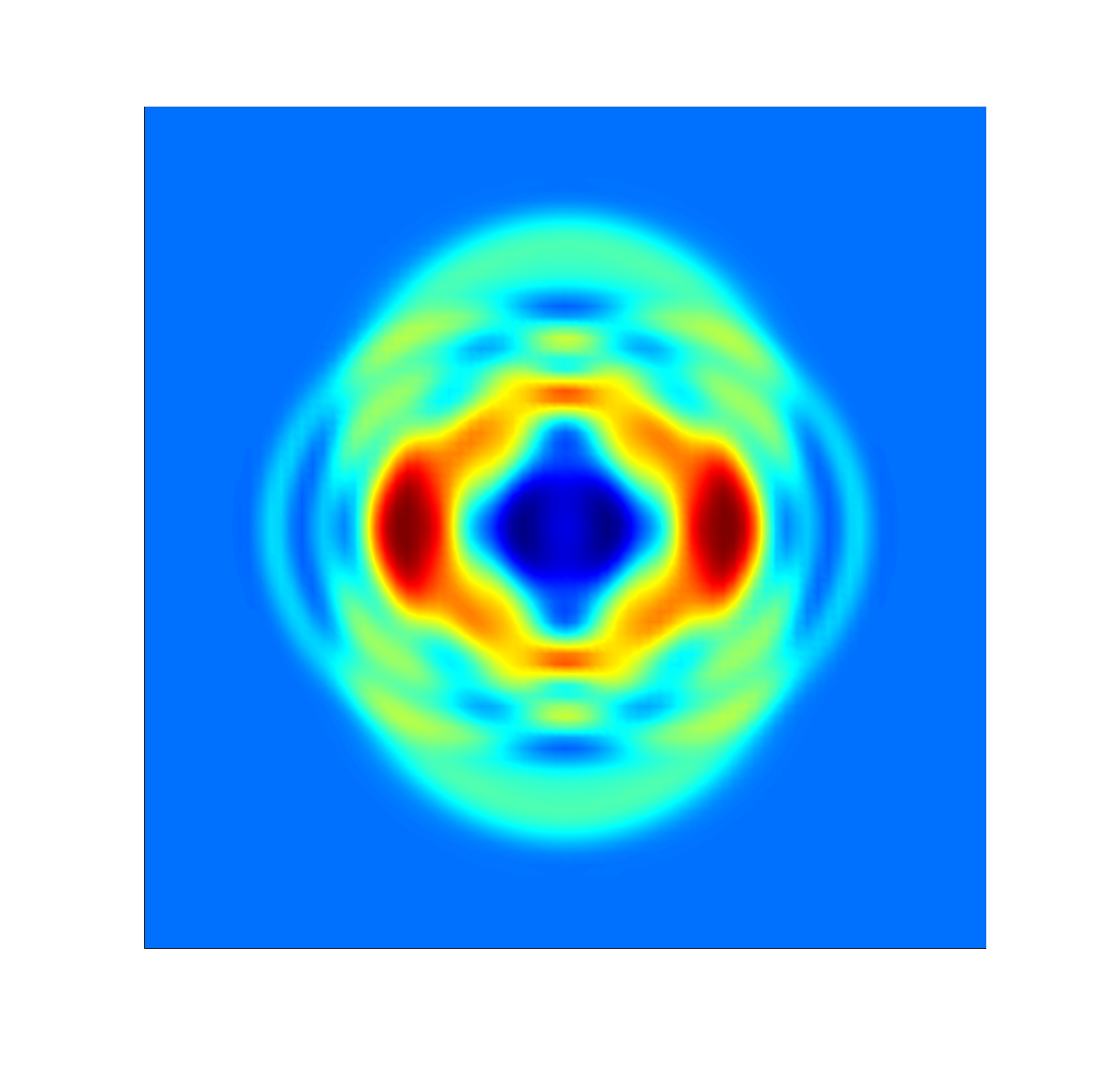}
		}
	\subfigure[$t=5$]
	{
		\includegraphics[width=0.25\linewidth]{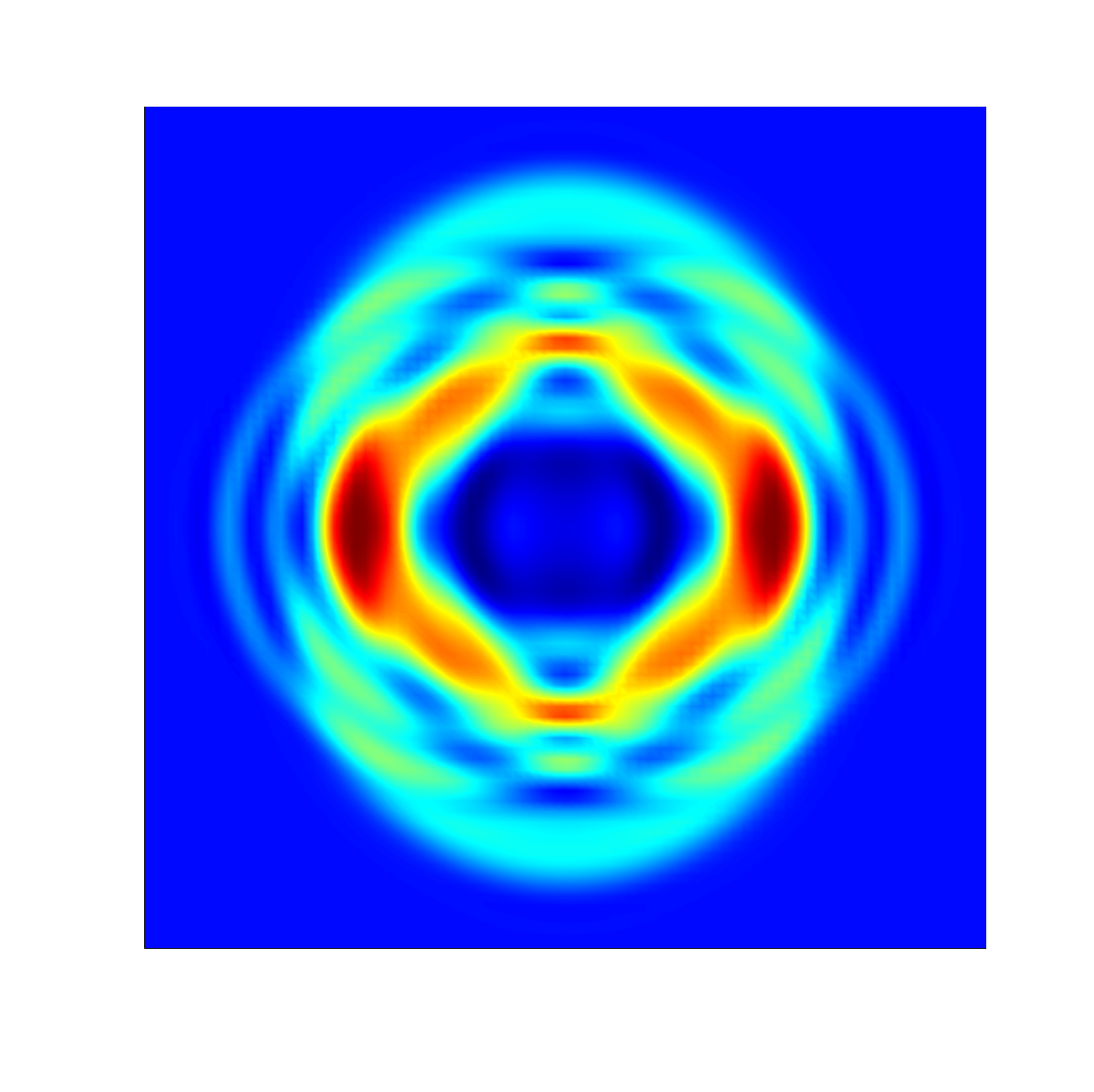}
		}
		}
	\caption{\ Temporal evolution of the approximate solution $\varphi$ depicted via contour lines.}
	\label{fig4.4-2}
\end{figure}

\subsection{Three-dimensional case}
\begin{example}\label{example4.2.1}
(Convergence test) The exact solutions are given by
\begin{equation}
	\renewcommand{\arraystretch}{1.5}
	\left\{\begin{array}{llllll}
		u=e^{-2t}\sin(\pi x)\sin(\pi y)sin(\pi z)+ie^{-3t}\sin(\pi x)\sin(\pi y)\sin(\pi z),\nonumber\\
		\varphi=3\pi^2e^{-t}\sin(\pi x)\sin(\pi y)\sin(\pi z).\nonumber
	\end{array}
	\right.
\end{equation}
\end{example}

Let $\Omega = (0,1) \times (0,1) \times (0,1)$ and $T = 1.0$. A structured Cartesian grid is employed to partition $\Omega$ in Fig. \ref{fig4.5}, with $M$ equal intervals along each coordinate direction; the resulting spatial step size is therefore $h = 1/M$. Guided by the a priori error bounds derived in Section 3, the temporal discretization parameter is set to $\tau = h$, guaranteeing that the time-step refinement aligns with the spatial mesh refinement in the asymptotic limit. At the final time $t = T$, we evaluate six discrete error measures:  
$\|I_h u^n - u_h^n\|_1$, $\|I_{2h} u_h^n - u^n\|_1$, $\|I_h \phi^n - \phi_h^n\|_1$, $\|I_{2h} \phi_h^n - \phi^n\|_1$, $\|p_h^n - p^n\|_0$, and $\|\varphi_h^n - \varphi^n\|_0$,  
all of which are reported  in Table \ref{table4.2}. 

\begin{figure}[htbp]
	\centering
	\includegraphics[width=0.4\textwidth]{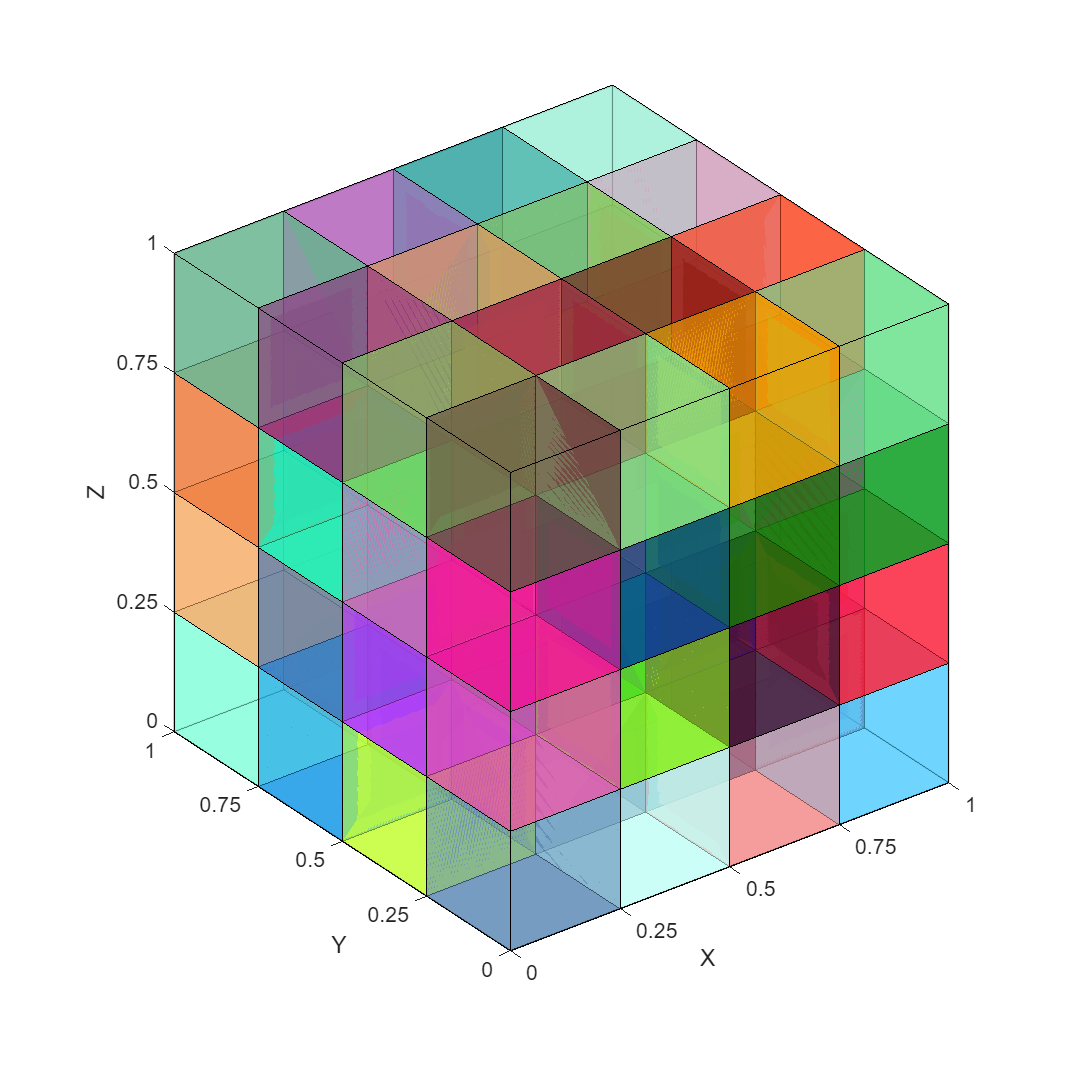}
	\caption{\ Mesh.}
	\label{fig4.5}
\end{figure}

\begin{table}[htbp]
	\caption{Errors  and rates at $t = 1.0$ for 3D case}\label{table4.2}
	\centering
	\begin{tabular*}{\textwidth}{@{\extracolsep{\fill}}l|llll}
		\toprule
		$M \times M\times M$ & $4 \times 4\times 4$ & $8 \times 8\times 8$ & $16 \times 16\times 16$ & $32 \times 32\times 32$  \\
		\midrule
		$\|I_hu^n - u_h^n\|_1$       & 3.5224e-01 & 8.4498e-02 & 2.0723e-02 & 5.1523e-03        \\
		order                        & -          & 2.0596     & 2.0277     & 2.0079            \\\hline
		$\|I_{2h} u_h^n - u^n\|_1$   & 4.0643e-01 & 8.9035e-02 & 2.1263e-02 & 5.2502e-03        \\
		order                        & -          & 2.1906     & 2.0660     & 2.0179            \\\hline
		$\|I_h\phi^n - \phi_h^n\|_1$ & 1.0608e+00 & 2.6191e-01 & 6.5182e-02 & 1.6276e-02        \\
		order                        & -         & 2.0180     & 2.0065     & 2.0018            \\\hline
		$\|I_{2h}\phi_h^n-\phi^n\|_1$& 1.1731e+00 & 2.7361e-01 & 6.6739e-02 & 1.6577e-02        \\
		order                        & -          & 2.1002     & 2.0355     & 2.0093            \\\hline
		$\|p_h^n-p^n\|_0$            & 3.1679e-01 & 1.0647e-01 & 2.8534e-02 & 7.2568e-03        \\
		order                        & -          & 1.5731     & 1.8997     & 1.9753            \\ \hline
		$\|\varphi_h^n-\varphi^n\|_0$& 1.0801e+00 & 2.9473e-01 & 7.5166e-02 & 1.8883e-02        \\
		order                        & -          & 1.8737     & 1.9712     & 1.9930            \\
		\bottomrule
	\end{tabular*}
\end{table}
Numerical results exhibit convergence behavior at the theoretically predicted rate of order two in $h$, in full accordance with the estimates stated in Remark \ref{rem2}. These findings substantiate the extension of the method’s theoretical foundation to the three-dimensional setting of the KGZ equations (\ref{eq1.1}). The decay profiles of these errors are illustrated in Fig. \ref{fig4.6} for graphical verification.
\begin{figure}[htbp]
\centering{
\subfigure[$u_h^n$, $\phi_h^n$.]{\includegraphics[width=0.35\linewidth]{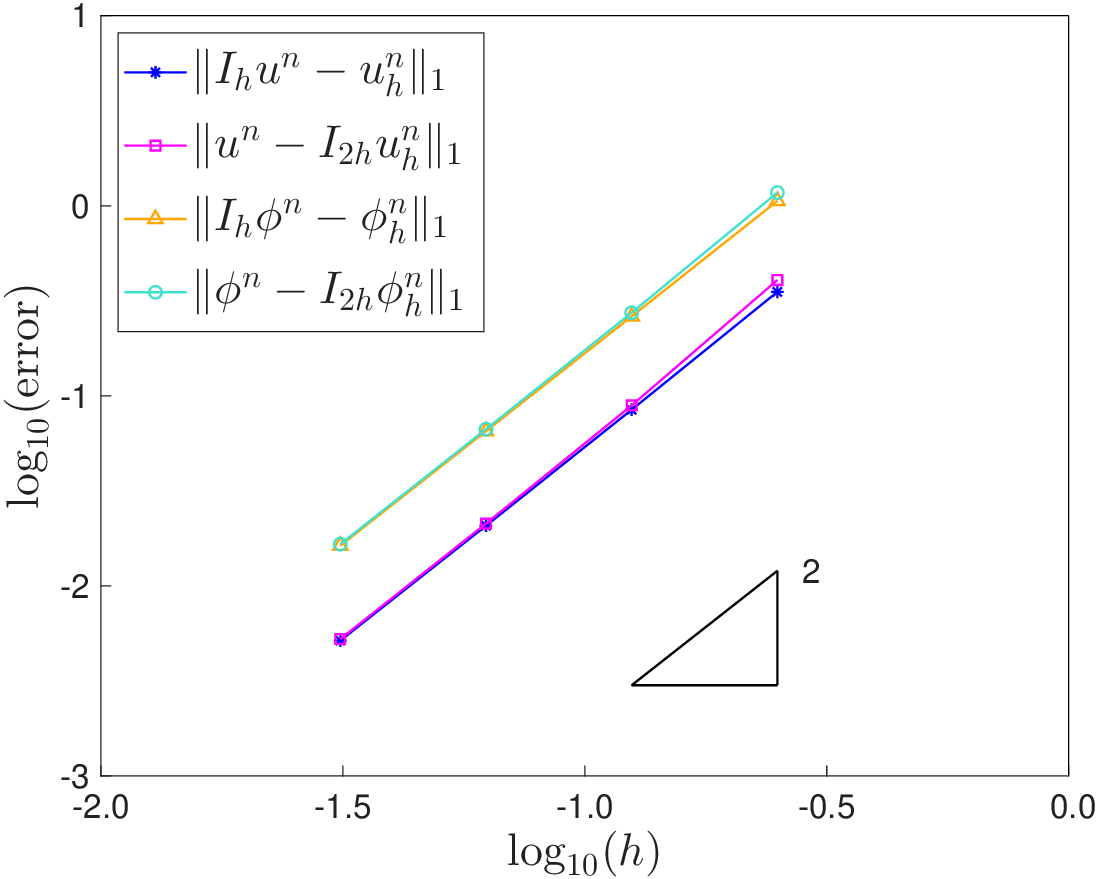}}\hspace{1cm}
\subfigure[$p_h^n$, $\varphi_h^n$.]{\includegraphics[width=0.35\linewidth]{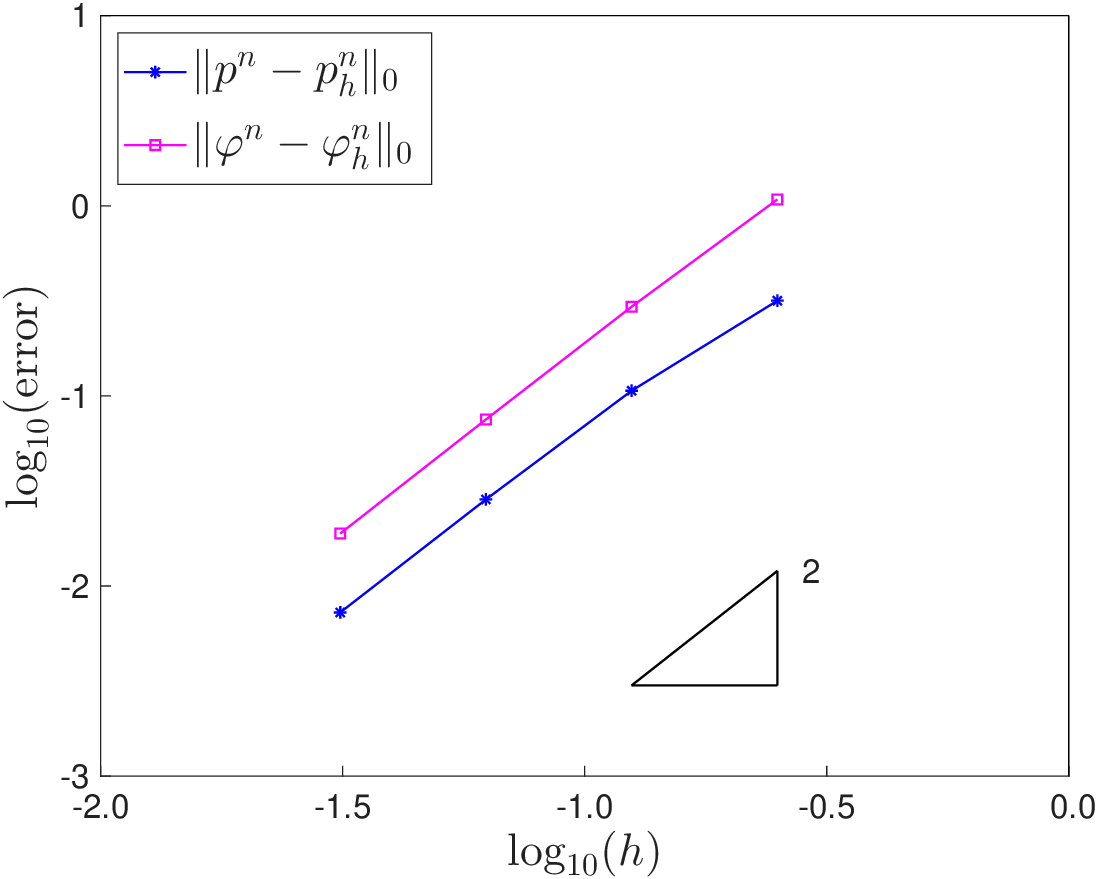}}}
\caption{\ Error curves of $u_h^n$, $\phi_h^n$, $p_h^n$, and $\varphi_h^n$ for 3D case.}
\label{fig4.6}
\end{figure}

\begin{example}\label{example4.2.2}
	(Dynamics of wave interactions) The initial conditions are given by
	\begin{equation}
	\renewcommand{\arraystretch}{1.5}
	\left\{\begin{array}{llllll}
		u_0=(1+\frac{i}{2})[\text{exp}(-(x+2)^2-y^2-z^2)+\text{exp}(-(x-2)^2-y^2)-z^2],\\
		\varphi_0=\text{sech}(x^2+(y+2)^2+z^2)+\text{sech}(x^2+(y-2)^2+z^2),\nonumber\\ 
		u_1=(1+\frac{i}{2})\text{exp}(-x^2-y^2-z^2),\\
		\varphi_1=\text{sech}(x^2+y^2+z^2).
	\end{array}
	\right.
\end{equation}
\end{example}

Let $\Omega=(-5,5)\times(-5,5)\times(-5,5)$,   $h = 5/32$, and $\tau = 0.05$. Isosurface visualizations of $|u|$, $Im(u)$, $Re(u)$ and $\varphi$ at different times ( $t = 0$, $0.3$, $1.2$, and $1.5$) are presented in Figs. \ref{fig4.7}--\ref{fig4.10}.  These numerical results illustrate that the developed numerical scheme is capable of robustly capturing the intrinsic wave propagation behaviors, nonlinear coupling effects, and complex interaction mechanisms governed by the three-dimensional KGZ system (\ref{eq1.1}), whilst simultaneously maintaining prominent numerical accuracy and satisfactory computational efficiency.

\begin{figure}[H]
	\centering{
	\subfigure[$t=0$]
	{
		\includegraphics[width=0.2\linewidth]{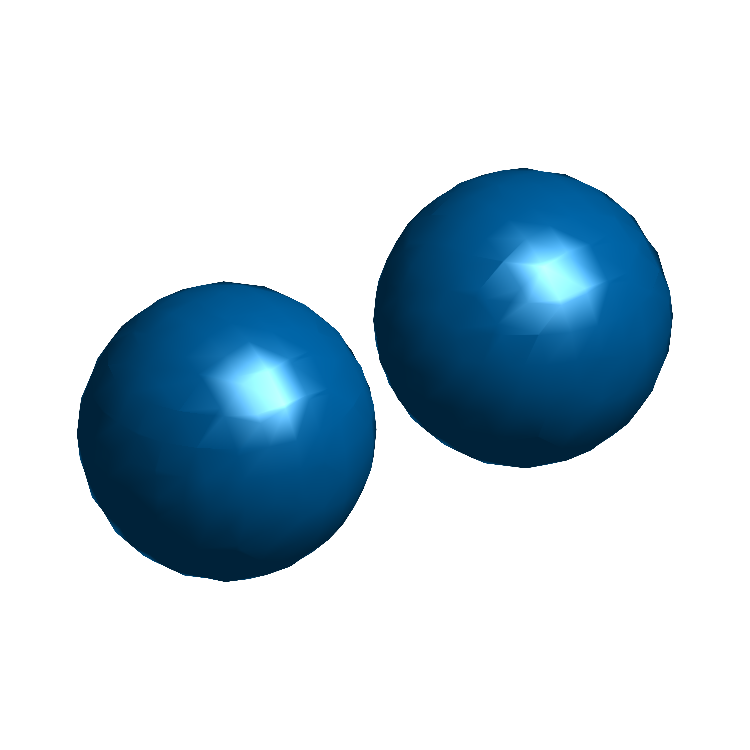}
		}
	\subfigure[$t=0.3$]
	{
		\includegraphics[width=0.2\linewidth]{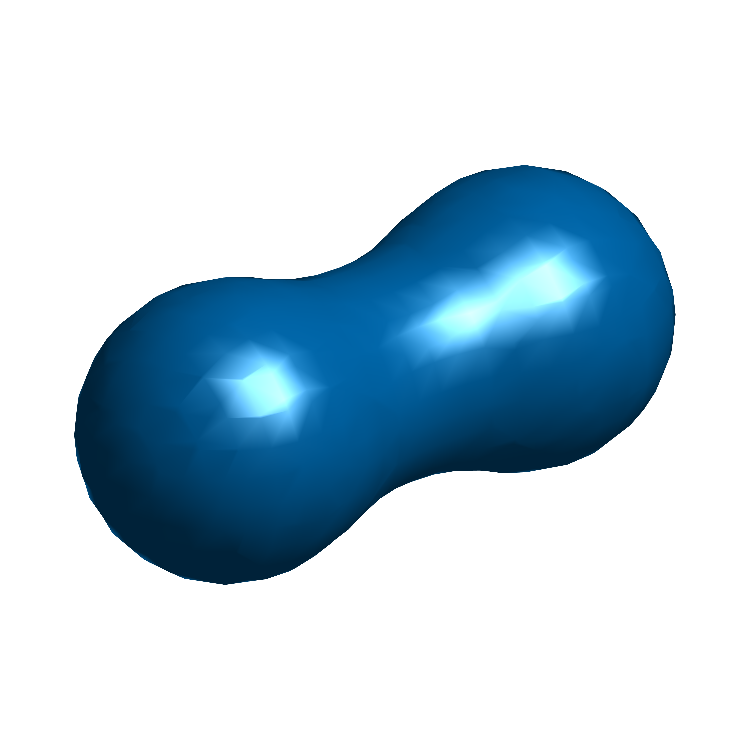}
		}
	\subfigure[$t=1.2$]
	{
		\includegraphics[width=0.2\linewidth]{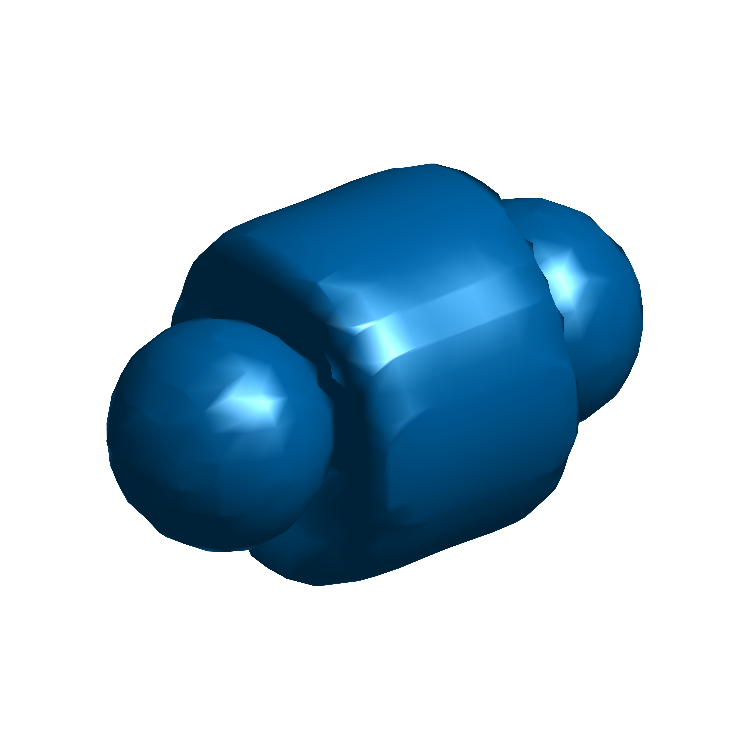}
		}
	\subfigure[$t=1.5$]
	{
		\includegraphics[width=0.2\linewidth]{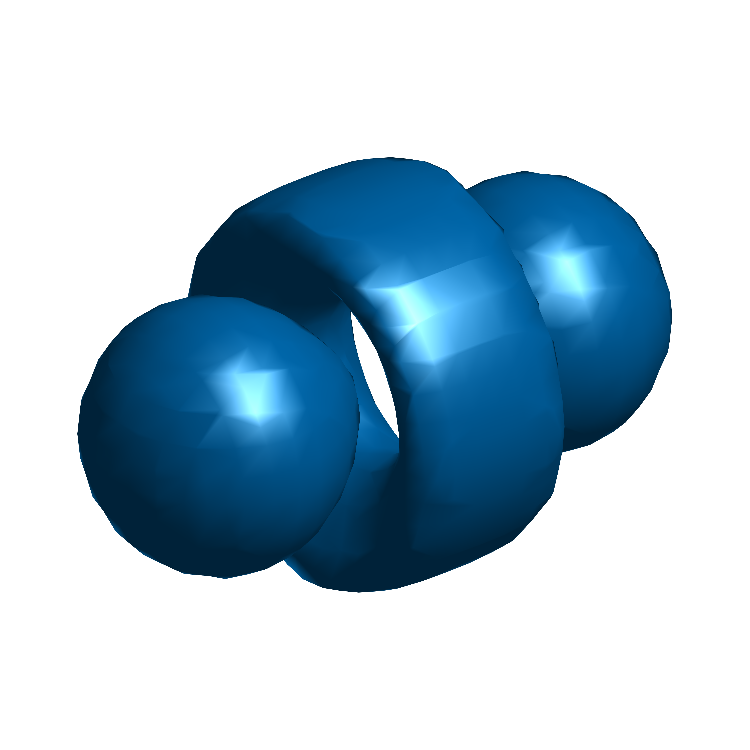}
		}
	}
	\caption{\ Isosurface profiles of $|u|$ at various time.}
	\label{fig4.7}
\end{figure}

\begin{figure}[H]
	\centering{
	\subfigure[$t=0$]
	{
		\includegraphics[width=0.2\linewidth]{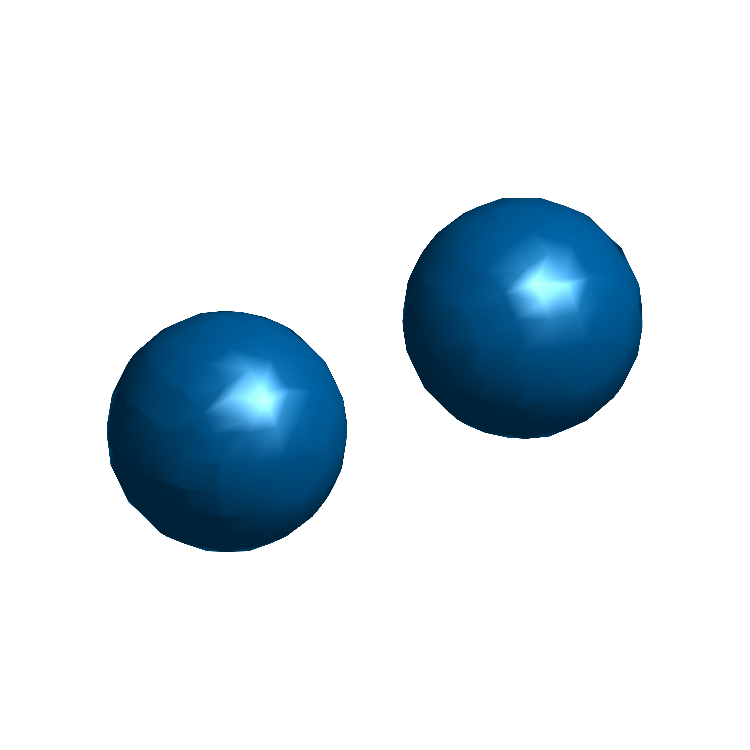}
		}
	\subfigure[$t=0.3$]
	{
		\includegraphics[width=0.2\linewidth]{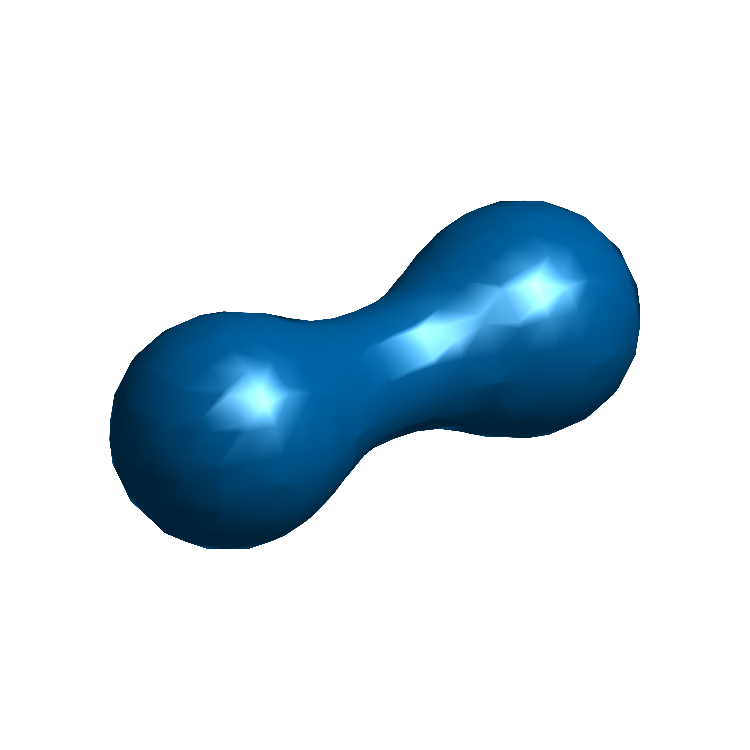}
		}
	\subfigure[$t=1.2$]
	{
		\includegraphics[width=0.2\linewidth]{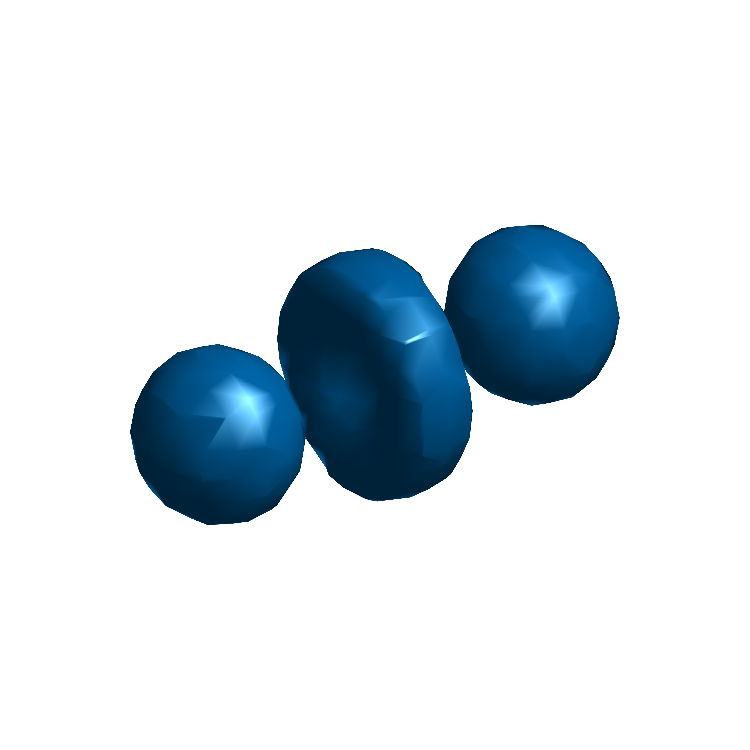}
		}
	\subfigure[$t=1.5$]
	{
		\includegraphics[width=0.2\linewidth]{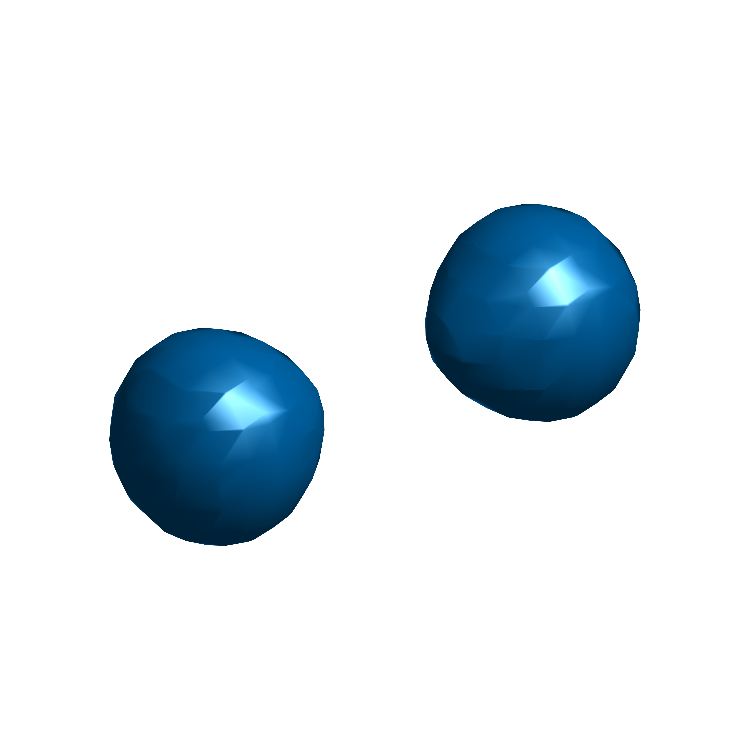}
		}}
	\caption{\ Isosurface profiles of $Im(u)$ at various time.}
	\label{fig4.8}
\end{figure}

\begin{figure}[H]
	\centering{
	\subfigure[$t=0$]
	{
		\includegraphics[width=0.2\linewidth]{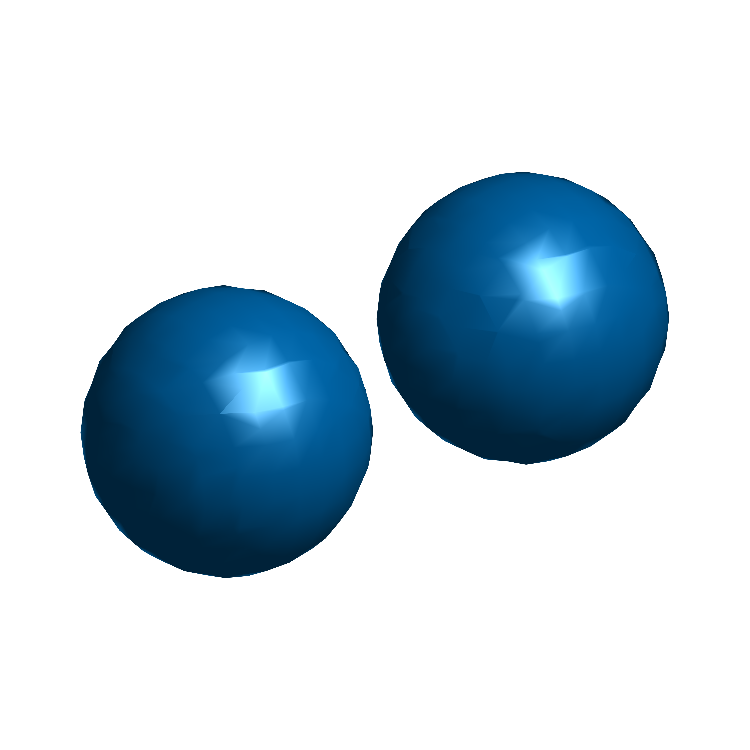}
		}
	\subfigure[$t=0.3$]
	{		\includegraphics[width=0.2\linewidth]{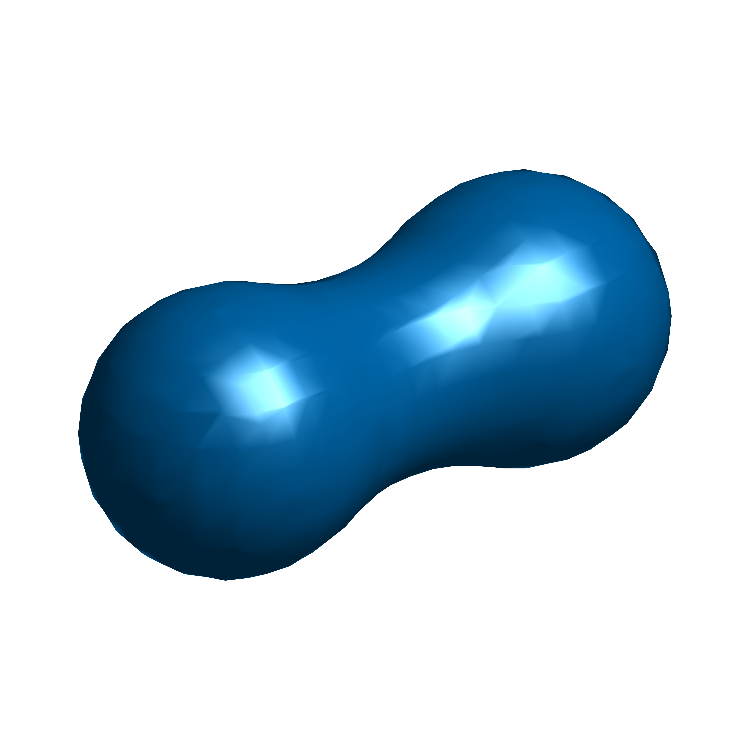}
		}
	\subfigure[$t=1.2$]
	{
		\includegraphics[width=0.2\linewidth]{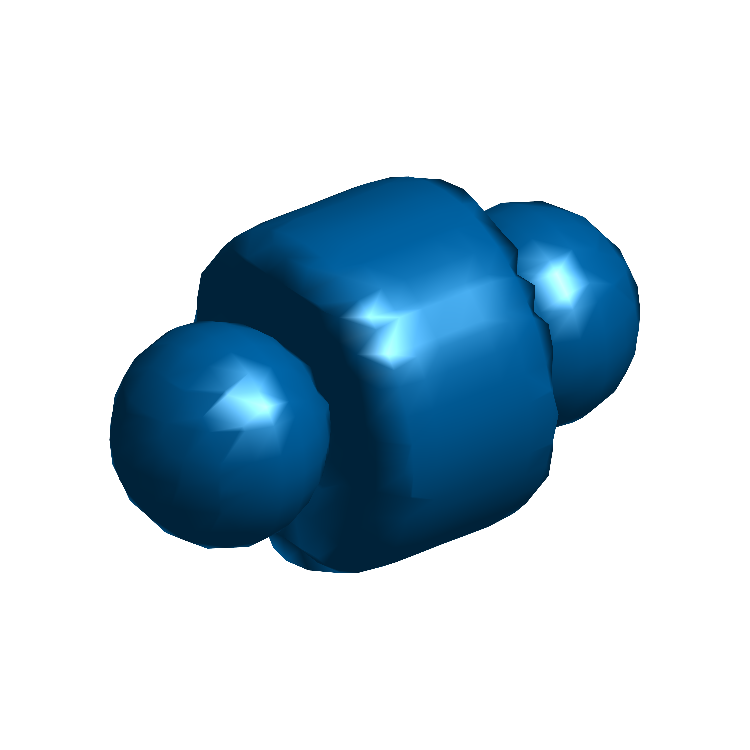}
		}
	\subfigure[$t=1.5$]
	{
		\includegraphics[width=0.2\linewidth]{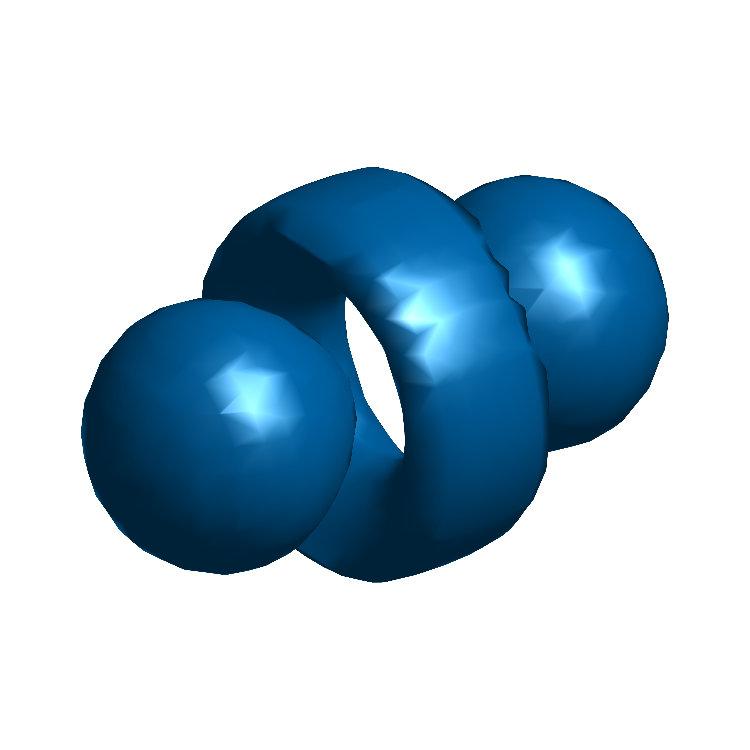}
		}
	}
	\caption{\ Isosurface profiles of $Re(u)$ at various time.}
	\label{fig4.9}
\end{figure}

\begin{figure}[H]
	\centering{
	\subfigure[$t=0$]
	{
		\includegraphics[width=0.2\linewidth]{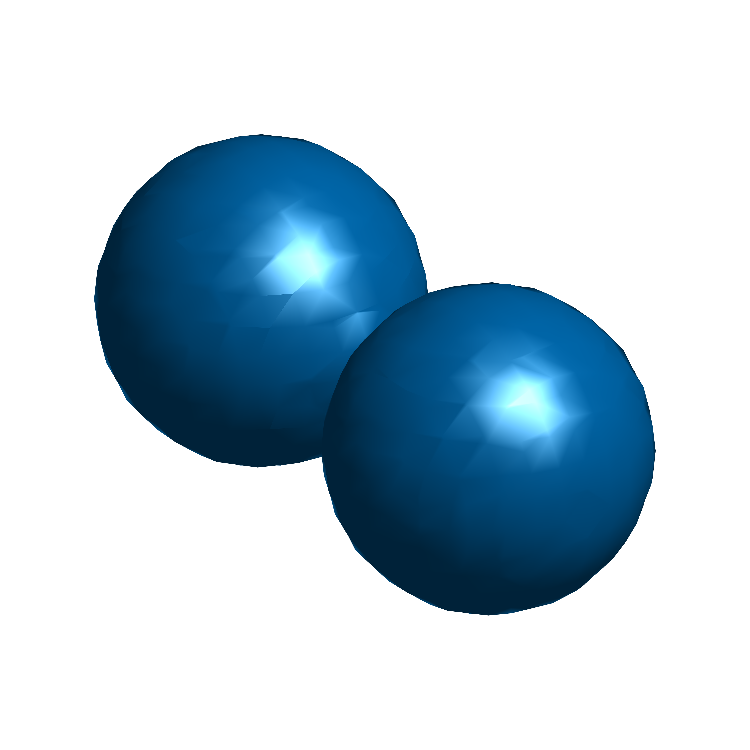}
		}
	\subfigure[$t=0.3$]
	{
		\includegraphics[width=0.2\linewidth]{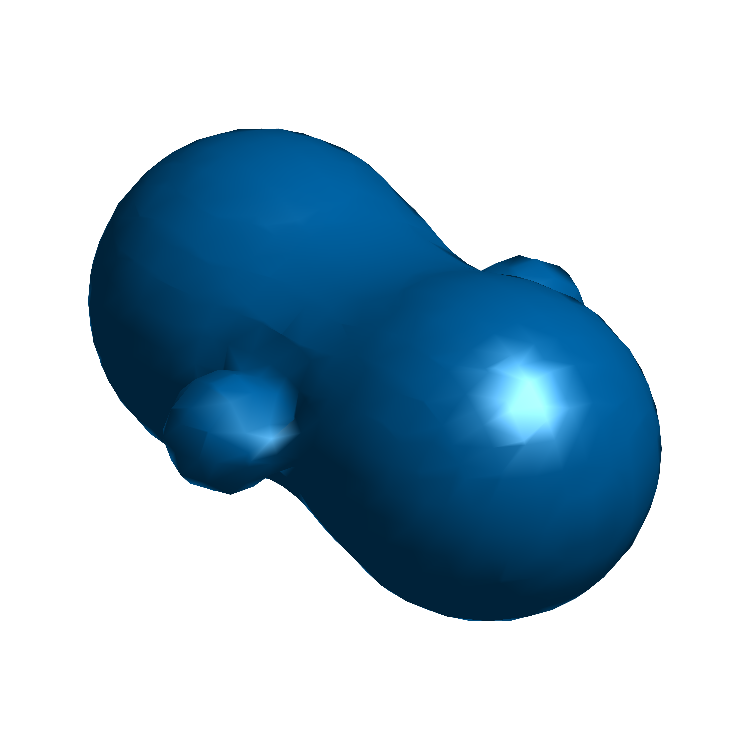}
		}
	\subfigure[$t=1.2$]
	{
		\includegraphics[width=0.2\linewidth]{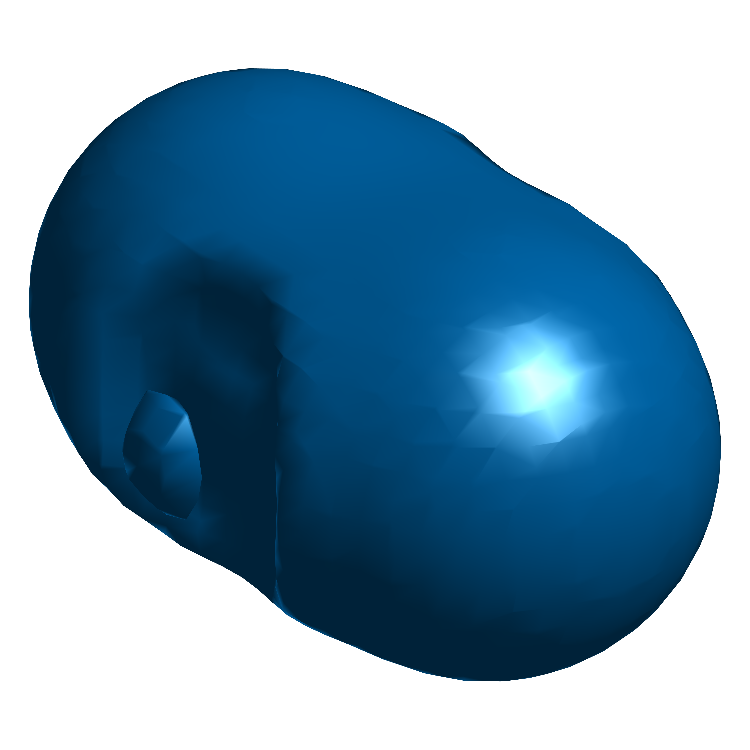}
		}
	\subfigure[$t=1.5$]
	{
		\includegraphics[width=0.2\linewidth]{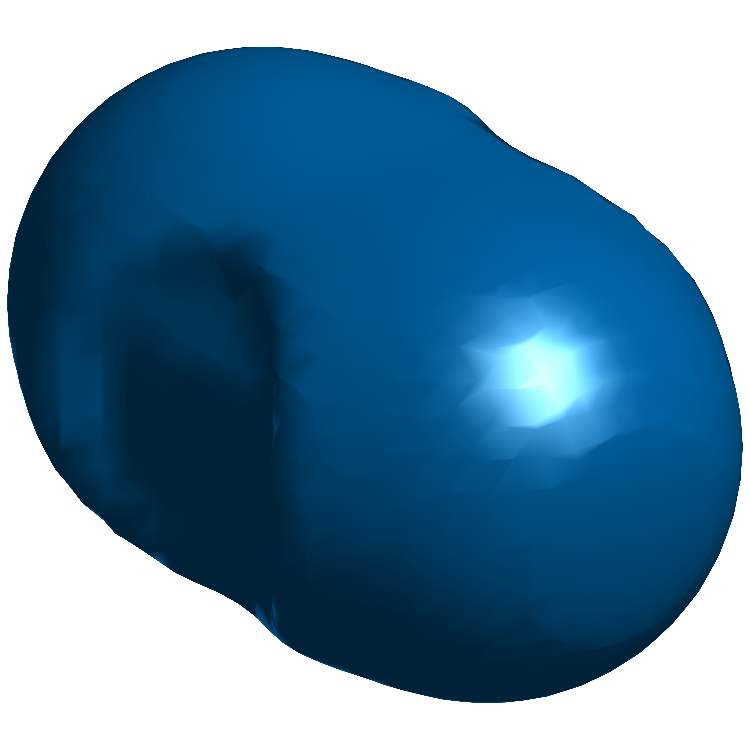}
		}
	}
	\caption{\ Isosurface profiles of $\varphi$ at various time.}
	\label{fig4.10}
\end{figure}

\section{Conclusion}
This article develops a rigorous theoretical and numerical framework for the generalized KGZ equations based on an energy-conservation bilinear Galerkin finite element method. A central contribution is the derivation of $H^1$-norm superconvergence for the primary variables $u$ and $\phi$ under relaxed regularity conditions, where the integrated Ritz projection and interpolation techniques serve as the critical analytical tool. Meanwhile, optimal $L^2$-norm error estimates are established for the auxiliary variables $p$ and $\varphi$. Numerical experiments quantitatively corroborate all theoretical analysis, validating both the high-order accuracy and the discrete energy-conservation properties of the proposed method.  

\section*{Acknowledgements}
Zhang's work was supported partially by the National Key Research and Development Program of China (2023YF1009003), the Natural Science Foundation of Shandong Province (ZR2023MA081) and the Fundamental Research Funds for the Central Universities of China (26CX03004A). J. Zhu’s work was partially supported by the National Council for Scientific and Technological Development of Brazil (CNPq).

\biboptions{sort&compress}
\bibliography{reference}

\end{document}